\documentclass[11pt]{amsart}
\usepackage{times}
\usepackage[T1]{fontenc}
\usepackage{amssymb, amsthm, amsmath}
\usepackage[active]{srcltx}

\usepackage{graphicx}
\usepackage[matrix, arrow,all,cmtip,color]{xy}
\usepackage{hyperref}
\usepackage{setspace}
\usepackage[margin=.8in]{geometry}

\DeclareMathOperator{\dcl}{dcl}

\DeclareMathOperator{\tp}{tp}

\DeclareMathOperator{\cl}{cl}

\newtheorem{introtheorem}{Theorem}

\newtheorem{theorem}{Theorem}[section]

\newtheorem{claim}[theorem]{Claim}

\newtheorem{corollary}[theorem]{Corollary}

\newtheorem{fact}[theorem]{Fact}
\newtheorem{lemma}[theorem]{Lemma}

\newtheorem{proposition}[theorem]{Proposition}

\theoremstyle{definition}
\newtheorem{definition}[theorem]{Definition}

\newtheorem{remark}[theorem]{Remark}


\newcommand{\sub}{\subseteq}

\newcommand{\Qq}{{\mathbb{Q}}}

\newcommand{\CQ}{\mathcal Q}


\newcommand{\CL}{{\mathcal L}}

\newcommand{\CN}{{\mathcal N}}

\newcommand{\CR}{{\mathcal R}}
\newcommand{\CM}{{\mathcal M}}

\newcommand{\CG}{{\mathcal G}}

\newcommand{\0}{\emptyset}

\renewcommand{\phi}{\varphi}



\long\def\symbolfootnote[#1]#2{\begingroup%
\def\thefootnote{\fnsymbol{footnote}}\footnote[#1]{#2}\endgroup}

\makeatletter

\def\Ind#1#2{#1\setbox0=\hbox{$#1x$}\kern\wd0\hbox to 0pt{\hss$#1\mid$\hss}
\lower.9\ht0\hbox to 0pt{\hss$#1\smile$\hss}\kern\wd0}

\def\CNotind#1#2{#1\setbox0=\hbox{$#1x$}\kern\wd0\hbox to 0pt{\mathchardef
\nn=12854\hss$#1\nn$\kern1.4\wd0\hss}\hbox to
0pt{\hss$#1\mid$\hss}\lower.9\ht0 \hbox to
0pt{\hss$#1\smile$\hss}\kern\wd0}

\def\la{\langle}
\def\ra{\rangle}
\def\Q2{\Qq^{\sqrt{2}}}

\makeatother

\title[A theory of pairs for non-valuational structures]{A theory of pairs for non-valuational structures}

\date{\today}
\author[E. Bar-Yehuda]{Elitzur Bar-Yehuda}
\email{elitzur.by@gmail.com}
  \author[A. Hasson]{Assaf Hasson$^\dagger$}
  \thanks{$^\dagger$ Supported by ISF grant No. 181/16}
  \address{Department of mathematics\\
    Ben Gurion University of the Negev\\
    Be'er Sehva\\
    Israel} \email{hassonas@math.bgu.ac.il} \urladdr{http://www.math.bgu.ac.il/\textasciitilde hasson/}

  \date{\today}
\author[Y. Peterzil]{Ya'acov Peterzil}
\address{Department of Mathematics,
    University of Haifa, Haifa, ISRAEL} \email{kobi@math.haifa.ac.il}\urladdr{http://math.haifa.ac.il/kobi/}



\begin{document}

\maketitle
\begin{abstract} Given a weakly o-minimal structure $\CM$ and its o-minimal
completion $\bar \CM$, we first associate to $\bar \CM$ a canonical language and
then prove that $Th(\CM)$ determines $Th(\bar \CM)$. We then investigate the theory
of the pair $(\bar \CM,\CM)$ in the spirit of the theory of dense pairs of o-minimal
structures, and prove, among other results, that it is near model complete, and
every definable open subset of $\bar M^n$ is already definable in $\bar \CM$.

We give an example of a weakly o-minimal structure which interprets $\bar \CM$ and
show that it is not elementarily equivalent to any reduct of an o-minimal trace.
\end{abstract}
\section{Introduction}
An expansion $\CM$ of an ordered group is \emph{weakly o-minimal non-valuational}
(below we use ``non-valuational'' for short) if it is weakly o-minimal (every
definable subset of $M$ is a finite union of convex sets) and does not admit
 any definable non-trivial convex sub-groups. Non-valuational structures were introduced in \cite{MacMaSt} and
  more systematically studied in \cite{Wenc1} and \cite{Wenc2}. In those works Wencel showed that
   to a non-valuational structure $\CM$ one can associate an o-minimal structure $\bar \CM$, whose
   universe is $\bar M$ -- the definable Dedekind completion of $\CM$ -- and with the additional property
   that the structure which $\bar \CM$ induces on (the natural embedding of) $M$ (in $\bar \CM$) is precisely the
   structure $\CM$. Wencel called the structure $\bar \CM$ \emph{the canonical o-minimal completion of $\CM$}.
    In \cite{KerenMSc} Keren shows that $\bar \CM$ has the same definable sets as the structure $\CM^*$, whose
     atomic sets are all sets of the form $\cl_{\bar M}(S)\sub \bar M^n$ for $\CM$-definable $S\sub M^n$,
     (see Proposition \ref{same language} below).
      Both Wencel and Keren's constructions have the problem that the signatures of the resulting structures depend
       on the structure $\CM$, rather than on its signature.

In the present paper we address this problem by considering, for $A\sub M$,
structures of the form $\CM^*_A$ whose atomic sets are all sets of the form
$\cl_{\bar M}(S)$ for $S$ an $\CM$-definable set over $A$. The starting point of the
present work, and the main result of  Section \ref{prelims} is:
\begin{introtheorem}\label{signature}
    Let $\CM$ be a non-valuational structure. Then $\CM_\0^*$ and $\CM^*$ have the same definable sets.
     Moreover, if $\CM\equiv \CN$ then $\CM_\0^*\equiv \CN_\0^*$.
\end{introtheorem}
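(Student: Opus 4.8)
The plan is to prove the two assertions separately.

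\emph{First assertion.} One inclusion is immediate: an atomic set of $\CM_\0^*$, being $\cl_{\bar M}(S)$ for some $\0$-definable $S\subseteq M^n$, is in particular an atomic set of $\CM^*$, so every $\CM_\0^*$-definable set is $\CM^*$-definable. For the converse it suffices to show that for every $\CL$-formula $\phi(\bar x,\bar y)$ (with $|\bar x|=n$, $|\bar y|=m$) and every $\bar b\in M^m$, the set $\cl_{\bar M}(\phi(M^n,\bar b))$ is definable in $\CM_\0^*$ over parameters from $\bar M$; the full statement then follows, since an arbitrary $\CM^*$-definable set is built from such atomic sets by Boolean operations, coordinate projections, and addition of parameters from $\bar M$. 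To treat $\cl_{\bar M}(\phi(M^n,\bar b))$ I would invoke the strong cell decomposition available in non-valuational structures (Wencel \cite{Wenc1,Wenc2}), in its $\0$-definable form, applied to the $\0$-definable set $\{(\bar y,\bar x)\in M^{m+n}:\phi(\bar x,\bar y)\}$; ordering the coordinates so that $\bar y$ comes first, this partitions the set into $\0$-definable strong cells $C_1,\dots,C_k$, each lying over a strong cell $D_i\subseteq M^m$. Then $\phi(M^n,\bar b)=\bigsqcup_{i\in I}(C_i)_{\bar b}$, where $I=\{i:\bar b\in D_i\}$ is a fixed finite set.

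The heart of the matter is the following sublemma, which I would prove by induction on the number of one-dimensional extensions realizing $C$ as a cell over $D$: if $C$ is a strong cell over the strong cell $D$ and $\bar b\in D$, then $\cl_{\bar M}(C_{\bar b})=(\cl_{\bar M}C)_{\bar b}$. The induction uses the defining feature of strong cells — their boundary functions are strongly continuous, i.e.\ extend continuously to the $\bar M$-closure of their domains — together with the uniqueness of continuous extensions from dense subsets: the canonical extension of a boundary function, restricted over the point $\bar b$, coincides with the extension of that function's restriction to the fibre over $\bar b$. Granting the sublemma, $\cl_{\bar M}(\phi(M^n,\bar b))=\bigcup_{i\in I}\cl_{\bar M}((C_i)_{\bar b})=\bigcup_{i\in I}(\cl_{\bar M}C_i)_{\bar b}$, a finite union of fibres over $\bar b\in M^m\subseteq\bar M^m$ of the atomic sets $\cl_{\bar M}C_i$ of $\CM_\0^*$, hence $\CM_\0^*$-definable over $\bar M$. (Combined with Proposition \ref{same language}, this also shows that $\CM_\0^*$, $\CM^*$ and $\bar\CM$ have the same definable sets.)

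\emph{Second assertion.} By the first assertion the canonical language $\CL^*$ depends only on the signature of $\CM$, so ``$\CM_\0^*\equiv\CN_\0^*$'' is meaningful; moreover an $\CL$-isomorphism $\CM\cong\CN$ visibly induces an $\CL^*$-isomorphism $\CM_\0^*\cong\CN_\0^*$ (it extends to an order-isomorphism of the definable Dedekind completions, a homeomorphism, taking $\0$-definable sets to $\0$-definable sets and hence their closures to closures). Given $\CM\equiv\CN$, by compactness there is an $\CL$-structure into which both $\CM$ and $\CN$ embed $\CL$-elementarily, so it is enough to prove: $\CM\preceq\CM'$ implies $\CM_\0^*\preceq(\CM')_\0^*$. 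First I would check that the inclusion $M\hookrightarrow M'$ extends to an $\CL^*$-embedding $\iota\colon\CM_\0^*\hookrightarrow(\CM')_\0^*$: a definable cut of $M$ cut out by a downward-closed $\psi(M,\bar c)$ with $\bar c\in M$ is sent to the cut cut out by $\psi(M',\bar c)$; this is well defined, injective and order-preserving because the relevant properties (being proper, downward closed, and the comparisons of such cuts) are expressed by $\CL$-sentences with parameters from $M$ and transfer from $\CM$ to $\CM'$. That $\iota$ respects the predicates of $\CL^*$ follows similarly: using the density of $M$ in $\bar M$, the relation $\bar a\in\cl_{\bar M}(\chi(M^k))$ is equivalent to the $\CL$-sentence ``every open box with endpoints in $M$ around $\bar a$ meets $\chi$'', whose parameters are the codes (in $M$) of the coordinates of $\bar a$, and this again transfers.

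It remains to show $\iota$ is \emph{elementary}, which I expect to be the principal obstacle. Since $(\CM')_\0^*$ has the same definable sets as the o-minimal completion of $\CM'$, the Tarski–Vaught test reduces this to showing that $\iota(\bar M)$ is $\dcl$-closed in $(\CM')_\0^*$ (the needed density, namely that $\iota(\bar M)$ is a dense order without endpoints, being clear as the underlying group is divisible): then a nonempty $(\CM')_\0^*$-definable subset of $\bar M'$ over $\iota(\bar a)$ with $\bar a\in\bar M$ is a finite union of points and open intervals whose endpoints lie in $\dcl(\iota(\bar a))\subseteq\iota(\bar M)$, and therefore meets $\iota(\bar M)$. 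Proving this $\dcl$-closure is where the non-valuational hypothesis and Wencel's analysis of the canonical completion are essential: running the uniform strong cell decomposition of the first assertion on the $\CM$-definable sets whose $\bar M'$-closures occur in a formula defining a given $d\in\dcl(\iota(\bar a))$, one exhibits $d$ as a value (or one-sided limit) of the canonical continuous extension of a $\0$-definable strongly continuous function evaluated at $\iota(\bar a)$ — a value already computed inside $\bar\CM$, hence lying in $\iota(\bar M)$. Equivalently, one may invoke the fact, implicit in Wencel's work, that $\CM\preceq\CM'$ forces the o-minimal completion of $\CM$ to be an elementary substructure of that of $\CM'$.
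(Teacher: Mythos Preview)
Your first-assertion argument is correct and close in spirit to the paper's Proposition~\ref{QE}(2): your fiber sublemma is essentially a closure-level version of the paper's Lemma~\ref{fibres} (stated there for iterative convex hulls) combined with Lemma~\ref{nice closure transition}. The paper's primary proof that $\CM^*_\0$ and $\CM^*$ have the same definable sets (Theorem~\ref{elitzur}) actually takes a different route --- it first shows that $\CM^*_\0$ and $\CM^*_M$ leave the same traces on each $M^n$, then invokes a general principle (Lemma~\ref{general-obs}) that two o-minimal expansions of the same dense order agreeing on traces over a common dense subset must have the same definable sets --- but your direct fiber approach is a legitimate alternative for this part.

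For the second assertion there is a genuine gap. You correctly reduce via Tarski--Vaught to showing that $\iota(\bar M)$ is $\dcl$-closed in $(\CM')_\0^*$, i.e., that $d\in\bar M$ whenever $d\in\dcl_{(\CM')_\0^*}(c)$ for some tuple $c$ from $M$. But the proposed justification --- run uniform strong cell decomposition on the atomic sets appearing in a defining formula and read off $d$ as a value of a $\0$-definable strongly continuous extension --- does not go through as stated. The obstruction is quantifiers: a $\0$-definable function in $(\CM')_\0^*$ is given by a formula which may involve \emph{projections} of atomic sets, and projection does not commute with intersecting with $(M')^K$, so knowing that each atomic set traces to a $\0$-definable set in $\CM'$ (your first assertion) does not propagate to arbitrary $\0$-definable sets. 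Making your sketch rigorous would require quantifier elimination for $(\CM')_\0^*$, but the paper establishes that (Proposition~\ref{QE}) only \emph{after} elementarity is in hand; your alternative appeal to Wencel is likewise circular, since the transfer of the completion along elementary extensions is precisely the statement at issue. The paper breaks this circle with a saturation-and-automorphism argument (Lemma~\ref{claim1.1}): in a sufficiently saturated model, every automorphism of $\CM$ fixing $A\sub M$ extends --- via its action on definable cuts --- to an automorphism of $\CM^*_A$, hence fixes every $\0$-definable set of $\CM^*_A$ setwise; saturation then forces the traces on $M^n$ of such sets to be $A$-definable in $\CM$, and the required $\dcl$-closure follows. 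This automorphism step is the missing idea in your sketch.
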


This result shows that to a non-valuational theory $T$ we can associate an o-minimal
theory $T^*$ which can be viewed as an invariant of $T$. Consequently, any of the
o-minimal properties of $T^*$ can reflect on the weakly o-minimal $T$ and vice
versa. This plays a crucial role in the proof of Theorem \ref{example} below.

 Section \ref{pairs} is dedicated to the study of the theory of the pair $\CM^P=(\CM_\0^*,\CM)$ for
 $\CM$ non-valuational, in the spirit of van den Dries' study of o-minimal dense pairs
 (see \cite{vdDriesDense}). Our main result is the following:

\begin{introtheorem}
    Let $\CM$ be  non-valuational. \begin{enumerate}
\item If $\CM\equiv \CN$ then $\CM^P\equiv \CN^P$.

We let $T^P=Th(\CM^P)$ and assume $\tilde \CN=(\CN',\CN)\models T^P$.

\item If $Y\sub (N')^n$ is $\0$-definable in $\tilde \CN$ then it can be written as
a boolean combination of sets defined by formulas of the form
\begin{equation}\label{formula} \exists x_1\cdots \exists x_k (\bigwedge_{i=1}^k
x_i\in P \, \&\, \phi(x_1,\ldots, x_k,y),\end{equation} and $\phi(x,y)$ is a formula
of the o-minimal structure $\CM'$.
 \item If $X\sub N^k$ is definable in $\tilde {\CN}$ over $A\sub N$ then $X$ is
already definable in the weakly o-minimal $\CN$. \item If $U\subseteq (N')^k$  is a
definable
    open set in $\tilde{\CN}$ then $U$ is already definable in the o-minimal structure $\CN'$.
    In particular,   $\tilde{\CN}$ has an o-minimal open core.
\end{enumerate}
\end{introtheorem}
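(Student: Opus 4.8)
The plan is to transpose van den Dries' analysis of dense pairs of o-minimal structures, the point being that the non-valuational hypothesis makes $\CN$ sit inside $\CN'$ as a dense and codense definable set whose induced structure is, by Proposition~\ref{same language}, exactly the weakly o-minimal $\CN$, and that $T^*:=Th(\CN')$ is governed by $Th(\CN)$ via Theorem~\ref{signature}. First I would give an explicit axiomatisation of $T^P$: the $P$-free reduct is a model of $T^*$; $P$ is dense and codense in every interval; and a scheme asserting that the traces on $P$ of the $\CL'$-definable sets obey $Th(\CN)$ under the dictionary of Proposition~\ref{same language}, which identifies an $\CM$-definable $S\sub M^n$ with the trace on $M$ of the $\CL'$-definable set $\cl_{\bar M}(S)\sub\bar M^n$. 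Completeness of this scheme is what I would prove by a back-and-forth argument; part~(1) is then immediate, since $\CM^P$ and $\CN^P$ both model the scheme with the same $Th(\CM)=Th(\CN)$.

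For the back-and-forth I would first set up the pregeometry: for small $A\sub N'$ put $\operatorname{scl}(A):=\dcl(A\cup N)$, definable closure taken in the o-minimal reduct; o-minimality and exchange make $\operatorname{scl}$ a pregeometry, and in a saturated model $\operatorname{scl}(A)\neq N'$. The key lemma, proved as in the dense-pairs case, is that if a tuple $\bar b$ is $\operatorname{scl}$-independent over a finite $A\sub N'$ then $\tp_{\CL'}(\bar b/A\cup N)$ is the unique (generic, non-forking) extension of $\tp_{\CL'}(\bar b/A)$ avoiding $\operatorname{scl}(A)$, and that, because $P$ is dense and codense, this generic locus meets $P$ inside every $\CL'$-definable open box. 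Granting this, let $(N_1',N_1),(N_2',N_2)\models T^P$ be $\aleph_1$-saturated and let $\bar a\in N_1'$, $\bar a'\in N_2'$ realise the same formulas of the shape~\eqref{formula} (with the same quantifier-free $\CL'$-type and $P$-pattern); one extends $\bar a\mapsto\bar a'$ to a partial elementary map one element at a time. The only substantive cases are (i) the new element lies in $\operatorname{scl}$ of the current tuple, say $b=f(\bar a,\bar c)$ with $f$ an $\CL'$-function and $\bar c$ from $N_1$ — then a matching $\bar c'\in N_2$ is produced using saturation of the induced structure, the elementary equivalence $\CN_1\equiv\CN_2$ supplied by the axiom scheme, and the key lemma's description of how the generic type of $\bar a$ meets $N_1$; and (ii) the new element is $\operatorname{scl}$-generic over the current tuple — then its $\CL'$-type over $\bar a'\cup N_2$ is forced by the key lemma once we ensure it still avoids $\operatorname{scl}(\bar a')$. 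One then obtains near model completeness, part~(2), by a compactness argument built on this back-and-forth: a pair-definable set is determined by the formulas~\eqref{formula} it realises, hence is a boolean combination of such. I expect the main obstacle to be the bookkeeping in this back-and-forth — one must match not only the tuples but the small substructures $\operatorname{scl}(\bar a)\cap N_1$ they generate, and this is exactly where Proposition~\ref{same language} and Theorem~\ref{signature} (to know that $Th(\CN')$ is controlled by $Th(\CN)$) are really used.

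Part~(3) follows quickly from~(2). If $X\sub N^k$ is definable over $A\sub N$ then, by~(2), $X$ is a boolean combination of sets $\{\bar y:\exists\bar x\in P^m\ \phi_i(\bar x,\bar y)\}$ with $\phi_i$ an $\CL'$-formula over $A$; intersecting with $N^k$ and using $P=N$ rewrites each conjunct as $\{\bar y\in N^k:\exists\bar x\in N^m\ \phi_i(\bar x,\bar y)\}$, a projection of the trace on $N$ of an $\CL'$-definable set. That trace is definable in the induced structure $\CN$, $\CN$ is closed under projection, and boolean combinations of $\CN$-definable sets are $\CN$-definable; so $X$ is definable in the weakly o-minimal $\CN$ over $A$.

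For part~(4) I would first reduce ``every definable open $U\sub(N')^k$ is $\CL'$-definable'' to the single claim that $\overline{Z}$ is $\CL'$-definable for every pair-definable $Z$: if $U$ is open and definable then $U=(N')^k\setminus\overline{(N')^k\setminus U}$, and applying the claim to $(N')^k\setminus U$ finishes it; since $\CN'$ is o-minimal this yields the o-minimal open core. To prove the claim I would use~(2) to present $Z$ through formulas~\eqref{formula}, write each basic block as a projection $\pi(\Phi\cap(P^m\times(N')^k))$ of an $\CL'$-definable $\Phi$, and compute its closure via o-minimal cell decomposition of $\Phi$: over a positive-dimensional fibre, density of $P$ forces the $P$-projection to have the same closure as the full $\CL'$-projection; over the $0$-dimensional fibres one refines the decomposition so that the relevant section is either a local homeomorphism — where density again makes the $P$-preimage dense in its $\CL'$-definable base — or has strictly lower-dimensional image, handled by induction on dimension and irrelevant to the closure. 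The genuinely delicate point, which I expect to be the hardest part of~(4), is that topological closure does not commute with the boolean operations in~(2); one must package the argument as an induction whose hypothesis simultaneously controls $\overline{Z}$ and the dimension of $\overline{Z}\setminus Z$, with codensity of $P$ ensuring that the lower-dimensional error terms do not disturb closures.
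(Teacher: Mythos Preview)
Your plan for (1) and (2) is essentially the paper's: a back-and-forth over free pairs, yielding near model completeness by compactness. Two omissions to flag. First, your axiomatisation is missing the paper's key axiom that every $\CN$-definable cut has a supremum in $N'$ (and, in the tight case, the converse axiom that every element of $N'$ realises such a cut). This axiom is what makes $\dcl_{\CN'}(A)=\bar A$ for $A\sub N$ (Lemma~\ref{dcl}) and hence what lets the cut of $a$ in $\dcl_{\CN'}(A)$ determine $\tp_{\CN}(a/A)$ (Claim~\ref{types}); without it your case (i) cannot be carried out. Second, when extending by $b\in N$ you must check that the induced $\CL$-isomorphism of $\dcl$-closures preserves $P$; the paper does this by partitioning the domain of the relevant $\CL'$-function into $\0$-definable strong cells on each of which the function lands either in $M$ or in $\bar M\setminus M$, invoking Theorem~\ref{intersection}. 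Your sketch does not isolate this step.

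Your route to (3) via (2) and Theorem~\ref{intersection} is a correct alternative to the paper's automorphism argument.

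Part (4) is where you genuinely diverge, and where there is a gap. The paper does not compute closures of boolean combinations of formulas~\eqref{formula}. Instead it first proves a sharper lemma (Theorem~\ref{stable-embed}(2)): any pair-definable subset of $N^k$ over a free $B$ equals $Y\cap N^k$ for a \emph{single} $\CL'$-set $Y$ over $B$. For an open $U$ this gives, via a short dimension argument, $\cl(U)=\cl(\mathrm{Int}(Y))$; closed sets then follow from definable completeness of the pair and the distance-function trick (this is the Boxall--Hieronymi method). Your inductive scheme, by contrast, founders on exactly the point you anticipate: knowing $\overline{Z_i}$ and $\dim(\overline{Z_i}\setminus Z_i)$ for the pieces does not pin down $\overline{Z_1\cap Z_2}$ or $\overline{Z_1\setminus Z_2}$ (two sets differing by a nowhere-dense set need not have the same closure). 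I do not see how to close this without effectively rediscovering the trace lemma and the distance-function argument, so I would replace your approach to (4) with the paper's.
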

The above results show that pairs $(\CM',\CM)$ as above fit into the setting of
recent works by Eleftherious, Gunaydin and Hieronymi (see for example \cite{EGH}) on
expansions of o-minimal structures by dense predicates.

 Non-valuational structures arise naturally
  in the study of dense pairs of o-minimal structures. Namely, if $\CM\prec \CN$ are o-minimal expansions
  of ordered groups and $M$
   is dense in $N$ then the structure induced on $M$ from $\CN$ is  non-valuational
   (weak o-minimality follows from \cite{BaiPoi} and non-valuationality is easy, see e.g., \cite{ElHaKe}).   Since every ordered group which is a reduct of a non-valuational structure, or even
     elementarily equivalent to one, is also such, a question arises whether every non-valuational structure arises
     in this manner.

   First, some terminology. A non-valuational structure $\CM$ is called an \emph{o-minimal trace} if there is
   a dense pair $\CM_0\prec \CN$
    such that $\la M_0,<\ra =\la M,<\ra$ (\i.e., the structures $\CM_0$ and $\CM$ have the same underlying
    ordered set) and the induced
    structure on $M$ in the pair $(\CN,\CM_0)$ has the same definable sets as $\CM$ (see \cite{ElHaKe} for details).
          In \cite{ElHaKe} we showed that an ordered  reduct of a non-valuational o-minimal trace
        need not be an o-minimal trace itself,  and that the class
     of reducts of o-minimal traces is not closed under elementary equivalence. In the present paper we show that
     even after closing the class of o-minimal traces under reducts and elementary equivalence we still do not cover
       all non-valautional structures:

\begin{introtheorem}\label{example}
    Let $\Q2$  be the expansion of $(\Qq,+)$ by the predicate $y<\sqrt 2 x$. Then $\Q2$ is non-valuational
    and not elementarily equivalent to a reduct of an o-minimal trace.
\end{introtheorem}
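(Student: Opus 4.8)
The plan is to isolate a first-order property holding in every reduct of an o-minimal trace but failing in $\Q2$, transferring information between $\Q2$ and its o-minimal completion through Theorem~\ref{signature}. That $\Q2$ is non-valuational is routine: since $P(x,0)$ holds exactly when $x>0$, the structure $\Q2$ defines the order and expands $(\Qq,+,<)$; it is weakly o-minimal because it is a reduct of the structure induced on the subset $\Qq$ of the o-minimal ordered $\Qq(\sqrt 2)$-vector space $(\Rr,+,<,\sqrt 2\cdot)$ --- the relation $P$ being the trace on $\Qq^2$ of $\{(x,y):y<\sqrt 2 x\}$ --- and definable subsets of $\Rr$ in $(\Rr,+,<,\sqrt 2\cdot)$, being finite unions of points and intervals, trace on $\Qq$ to finite unions of convex sets; and $(\Qq,+)$, being archimedean, has no nontrivial proper convex subgroup. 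A direct computation then identifies the completion: $\bar M=\Rr$, the closure in $\Rr^2$ of the graph of $P$ equals $\{(x,y):y\le\sqrt 2 x\}$, and from this relation together with $+$ one defines multiplication by $\sqrt 2$ and the ordering; hence $\overline{\Q2}$ is interdefinable with $(\Rr,+,<,\sqrt 2\cdot)$, and by Theorem~\ref{signature} the invariant $Th(\Q2)^*$ is the theory of ordered $\Qq(\sqrt 2)$-vector spaces.

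The heart of the matter is that $\Q2$ admits no definable total order-preserving bijection whose square is the doubling map: for every formula $\psi(x,y,\bar z)$ in the language of $\Q2$, the sentence ``for all $\bar z$, if $\psi(\cdot,\cdot,\bar z)$ is the graph of an order-preserving bijection $f$ of the universe then $f(f(x))\ne x+x$ for some $x$'' belongs to $Th(\Q2)$ (the order, hence this sentence, is $\0$-definable in $\Q2$). Indeed, were $f$ a $\Q2$-definable order-preserving bijection of $\Qq$ with $f\circ f$ equal to the doubling map, then --- since $\Q2$ is the structure induced on $\Qq$ by $\overline{\Q2}$ (Wencel) and closures of definable sets are definable in $\overline{\Q2}$ --- the closure of the graph of $f$ in $\Rr^2$ would be the graph of an $\overline{\Q2}$-definable order-preserving bijection $\bar f$ of $\Rr$ extending $f$, with $\bar f\circ\bar f$ the doubling map (by density and continuity). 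The functional equation forces $\bar f$ to have a fixed point, necessarily $0$; near $0$ the definable map $\bar f$ is linear with slope $r\in\Qq(\sqrt 2)$ satisfying $r^2=2$, so $r=\sqrt 2$; propagating the functional equation outward gives that $\bar f$ is multiplication by $\sqrt 2$ --- which does not carry $\Qq$ into $\Qq$, a contradiction.

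Now suppose, for contradiction, that $\Q2\equiv\CR$ with $\CR$ a reduct of an o-minimal trace $\CM$, presented by a dense pair $\CM_0\prec\CN$ with $\langle M_0,<\rangle=\langle M,<\rangle$. Applying Theorem~\ref{signature} to $\CR\equiv\Q2$ shows $Th(\CR)^*$ is again the theory of ordered $\Qq(\sqrt 2)$-vector spaces, so $\bar\CR$ is interdefinable with such a space; in particular multiplication by $\sqrt 2$ and its inverse are $\0$-definable in $\bar\CR$. Since $\CR$ is a reduct of $\CM$, $\bar\CR$ is a reduct of $\bar\CM$, and for an o-minimal trace the set $M$ is closed under the $\0$-definable functions of $\bar\CM$ --- hence under those of $\bar\CR$. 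Thus $M$ is closed under multiplication by $\sqrt 2$ and its inverse, so this map restricts to a \emph{total} order-preserving bijection $g\colon M\to M$ with $g(g(x))=x+x$ for all $x$; and $g$ is $\CR$-definable, because $\CR$ is the structure induced on $M$ by $\bar\CR$ (Wencel) and, by closure of $M$, the trace on $M^2$ of the $\bar\CR$-definable graph of multiplication by $\sqrt 2$ is exactly the graph of this total function. As $\CR\equiv\Q2$, the existence of $g$ contradicts the schema of the previous paragraph.

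The delicate point --- the one to dwell on --- is the assertion that, for an o-minimal trace $\CM$, the set $M$ is closed under the $\0$-definable functions of $\bar\CM$; equivalently, that $\bar\CM$ is interdefinable with the ambient o-minimal structure $\CN$ of the dense pair, of which $M_0=M$ is an elementary substructure. This is precisely what separates a genuine trace --- whose ``small'' set is an \emph{elementary substructure} of an o-minimal structure, and hence closed under its definable functions --- from $\Q2$, whose small set $\Qq$ is merely a dense \emph{subset} of $(\Rr,+,<,\sqrt 2\cdot)$ and is not closed under multiplication by $\sqrt 2$. Its proof should proceed via the description of the induced structure in a dense pair together with the analysis of o-minimal traces in \cite{ElHaKe}.
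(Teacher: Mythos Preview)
Your argument is well-organized and the reduction---show that $\Q2$ admits no definable total order-preserving self-map squaring to the doubling map, then show that any reduct of a trace \emph{does}---is the right shape. But the ``delicate point'' you isolate is not a technicality to be outsourced to \cite{ElHaKe}: it is the entire content of the theorem, and as stated your claim is simply false. Take the trace $\CM$ in its natural presentation, with a relation symbol for each set $Y\cap M_0^n$ where $Y$ is $\CN$-definable over $N$. Then for every $r\in N\setminus M_0$ the cut $\{x\in M_0:x<r\}$ is atomic in $\CM$, so the constant $r$ is $\0$-definable in $\bar\CM$, and $M_0$ is not closed under it. In fact one checks that $\bar M=N$ as sets and that the $\0$-definable sets of $\bar\CM$ are exactly the $\CN$-definable sets over arbitrary $N$-parameters; so $\bar\CM$ is not ``interdefinable with $\CN$'' in any sense that respects $\0$-definability, and the assertion ``$\bar\CR$ is a reduct of $\bar\CM$'' is ill-posed too, since $\bar R$ (here $R+\sqrt 2\,R$) is typically a proper subset of $\bar M=N$.

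What you actually need is far more specific: that the $\CN$-definable endomorphism $\lambda_{\sqrt 2}$ of $\langle N,+\rangle$ obtained from $\Psq^{\CR}$ is $\0$-definable in $\CN$, so that $\CM_0\prec\CN$ forces $\lambda_{\sqrt 2}(M_0)\subseteq M_0$. The paper establishes exactly this, not through any general structural fact about traces, but via the Miller--Starchenko growth dichotomy \cite{MilStar}: either every $\CN$-definable endomorphism of $\langle N,+\rangle$ is already $\0$-definable (the linear case), or $\CN$ admits a $\0$-definable real closed field structure, in which case $\lambda_{\sqrt 2}$ must be multiplication by the field-theoretic $\sqrt 2$ and is again $\0$-definable. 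Either branch then closes exactly as you anticipate, but the dichotomy is the missing engine, and nothing in \cite{ElHaKe} or in the general theory of dense pairs replaces it.
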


Along the way we reveal a new dividing line between two types of  non-valuational
structures:
\begin{itemize}
    \item \emph{Tight} structures (of which $\Q2$ is a typical example), in which $\CM^*$ is interpretable in $\CM$.
     These are \emph{small}
    (in the sense of \cite{vdDriesDense}), and in that respect differ significantly from o-minimal traces.
    \item Non-tight structures, whose theory resembles to a much greater extent that of o-minimal traces.
\end{itemize}

\noindent {\em This project was initiated by the M.Sc thesis of the first author at
Ben Gurion University, under the supervision of the other authors. We thank
Pantelis Eleftheriou for his helpful comments.}

\section{Preliminaries}\label{prelims}
{\em We fix a non-valuational structure $\CM$ and its definable completion $\bar
M$}. Recall that the elements of $\bar M$ are all (unique) realizations of definable
cuts in $\CM$. These will be identified here with the definable open subsets of $M$
that are bounded above and downward closed. The set $\bar M$ is equipped with
ordering by inclusion. The structure $\la M,<\ra$ is naturally embedded into $\bar
M$ via the map $a\mapsto (-\infty ,a)$, and from now on we will view $M$ as a subset
of $\bar M$. The topology on $\bar M$ and $\bar M^n$ are the order and the product
topology, respectively. We let $\cl_{\bar M}(-),
\partial_{\bar M}(-)$ denote the corresponding topological operations in $\bar M^n$.
Unless otherwise stated, all definability below refers to the structure $\CM$.

Recall that a partial function $f:M^n\to \bar M$ is said to be {\em definable} if
the set $\{(x,y)\in M^{n+1}:y< f(x)\}$ is definable. Equivalently,  the family of
cuts $\{y\in M: y<f(x)\}$, for $x\in M^n$,  is a definable family (and can be
identified with a sort in $\CM$).

 We start by collecting several useful
facts concerning the relationship of $\CM$ and  various structures on $\bar M$.
We first recall the definition of a strong cell $C\sub M^n$ from \cite{Wenc1}
\footnote{We are using Wencel's definition, in a slightly different formulation than
in \cite{MacMaSt}.} The definition will be inductive in $n$ and for the induction
step we will also associate inductively  to each strong cell $C\subseteq M^n$ its
so-called \emph{iterative convex hull} $\bar C$,  $C\sub \bar C\sub \bar M^n$.
Having defined $C$ and $\bar C$ below, we say that an $\CM$-definable function
$f:C\to \bar M$ is \emph{strongly continuous} if it extends continuously to $\bar
f:\bar C\to \bar M$, and in addition either $f(C)\subseteq M$ or $f(C)\subseteq \bar
M\setminus M$. We are now ready to state the definition:

\begin{definition}
 A set $C\subseteq M$ is {\em a strong cell} if
it is either a point, in which case $\bar C=C$, or an open convex set, in which case
$\bar C$ is defined as the convex hull of $C$ in $\bar M$.

Inductively, If $C\sub M^n$ is a strong cell (with the associated $\bar C\sub \bar
M^n$) and $f,g: C\to \bar M$ are strongly continuous with $\bar f(x)<\bar g(x)$ for
all $x\in \bar C$ (note the strong assumption here!) then $\Gamma_{f}(C)$ -- the
graph of $f$ on $C$ -- and $(f,g)_C:=\{(x,y)\in M^{n+1}: f(x)<y<g(y)\}$ {\em are
strong cells}. In the first case the iterative convex hull is defined to be the
graph of the extension $\bar f:\bar C\to \bar M$, and in the second case it is
defined to be
$$\{(x,y)\in \bar M^{n+1}: x\in \bar C\, \, \&\,\,  \bar f(x)<y<\bar g(x)\}.$$
\end{definition}

\begin{remark}\label{rem-basic cell} \begin{enumerate}
\item It is easy to verify that for each strong cell $C\sub M^n$ there exists a
homeomorphic projection $\pi_C:C\to D\sub M^k$ onto $k$ of the coordinates, $k\leq
n$, whose image is an open strong cell in $M^k$. In this case $\dim C:=k$. The
coordinate functions of $\pi_C^{-1}$ are strongly continuous on $D$.

\item  Notice that  each strong cell $C$ is a subset of $M^n$ that is definable in
$\CM$, and furthermore the various functions $f$ and $g$ in the inductive definition
of $C$ are definable in $\CM$, even though they might take values in $\bar
M\setminus M$. However, in general $\bar C\sub \bar M^n$ is not definable in $\CM$
in any obvious sense because it might not be contained in finitely many sorts in
$\CM$.
\end{enumerate}
\end{remark}

We can now describe Wencel's canonical completion $\bar \CM$, but we refine his
definition so we have a better control of parameters.

\begin{definition} Given $A\sub M$, we let $\bar \CM_A$ be the expansion of $\bar M$
by all iterative convex hulls $\bar C\sub \bar M^n$, so that $C\sub M^n$ is a strong
cell defined over $A$.

\end{definition}

It is easy to see that the order relation $<$ is an atomic relation in $\bar \CM_A$.
Since $\la M,<,+\ra$ is divisible, \cite{MacMaSt}, and $M$ is dense in $\bar M$, the
group operation extends uniquely to $\bar M$, so it is strongly continuous, and its
graph $C_{+}$ is a strong cell whose iterative convex hull is the graph of a group
operation on $\bar M$ that we still denote by $+$.

We now collect some of the main results  from \cite{Wenc2}
\begin{fact} \label{Facts} Let $\CM$ be a weakly o-minimal non-valuational structure.
\begin{enumerate}
\item Every $A$-definable set has a decomposition into finitely many strong cells,
each defined over $A$.

\item The structure $\bar \CM_M$ is o-minimal.

\item If $X\sub \bar M^n$ is definable in $\bar \CM$ then $X\cap M^n$ is definable
in $\CM$.

\end{enumerate}
\end{fact}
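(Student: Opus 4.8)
Although parts (1)--(3) are quoted from Wencel \cite{Wenc1,Wenc2}, let me indicate the shape of the arguments I would give. Everything flows from a single \emph{strong cell decomposition theorem}, which is part (1); parts (2) and (3) are then read off it. As in the o-minimal setting the engine is a monotonicity/continuity theorem, and the one new phenomenon is that the absence of definable convex subgroups forces the boundary functions of a cell to be \emph{strongly} continuous: to extend continuously to the iterative convex hull of their domain and to stay either entirely inside $M$ or entirely outside it. This is exactly what makes iterative convex hulls behave well on $\bar M^{n}$.

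For part (1) I would induct on $n$. When $n=1$, weak o-minimality writes an $A$-definable $X\subseteq M$ as a finite union of convex sets; removing from each convex piece the at most two endpoints that happen to lie in $M$ leaves an open convex set, while the cut naming each endpoint is $A$-definable, so one obtains strong cells over $A$. For the inductive step, given $X\subseteq M^{n+1}$ over $A$, uniform finiteness for weakly o-minimal theories bounds the number of convex components of the fibres $X_{x}$ uniformly, and the maps sending $x$ to the endpoints of those components are $A$-definable partial functions $M^{n}\to\bar M$; by the monotonicity theorem, after refining the inductively obtained strong cell decomposition of $M^{n}$, all these functions become strongly continuous and pairwise comparable on $\bar D$ over each base cell $D$, so that the graphs $\Gamma_{f}(D)$ and bands $(f,g)_{D}$ are strong cells refining $X$. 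The genuine obstacle here is the monotonicity theorem itself, namely that an $\CM$-definable $f:M^{n}\to\bar M$ is, off a lower-dimensional set, strongly continuous; this is precisely where non-valuationality is used, and I would import Wencel's proof verbatim.

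For part (2) the plan is to prove the matching decomposition on the other side: every subset of $\bar M^{n}$ definable in $\bar\CM_{M}$ is a finite union of cells in $\bar M^{n}$, namely iterative convex hulls of strong cells together with whatever lower-dimensional pieces on their frontiers are needed to close the family. Given such a decomposition, o-minimality is the case $n=1$, and here there is even room to spare: since $\bar M$ is Dedekind complete, a definable \emph{convex} subset of $\bar M$ already splits into finitely many points and open intervals, so it would suffice to prove $\bar\CM_{M}$ merely weakly o-minimal. The work, as usual, is closure of the cell family under Boolean operations and coordinate projections; here Remark \ref{rem-basic cell}(1) provides, for each strong cell $C$, a homeomorphic projection onto an open strong cell, which controls the shadows of $\bar C$, and the strictness $\bar f(x)<\bar g(x)$ \emph{for all $x\in\bar C$} built into the definition keeps the fibres of $\bar C$ honest open intervals or singletons, so that projecting and reassembling stays inside the family. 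Part (1), together with the identity in the next paragraph, lets one transport the $\CM$-side decomposition to the $\bar M$-side.

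Part (3) is then essentially a corollary of part (2). The key identity is $\bar C\cap M^{n}=C$ for every strong cell $C\subseteq M^{n}$: for open convex $C\subseteq M$, any $m\in M$ strictly between $\inf_{\bar M}C$ and $\sup_{\bar M}C$ lies between two members of $C$ and so belongs to $C$, and the general case follows by the evident induction through the definition of the iterative convex hull. Now let $X\subseteq\bar M^{n}$ be definable in $\bar\CM=\bar\CM_{M}$. Its parameters from $\bar M$ are realizations of $\CM$-definable cuts, hence lie in the definable closure of a finite $A\subseteq M$, so by part (2) $X$ is a finite union $\bigcup_{i}\bar C_{i}$ of iterative convex hulls of strong cells $C_{i}\subseteq M^{n}$ over $A$. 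Intersecting with $M^{n}$ yields $X\cap M^{n}=\bigcup_{i}C_{i}$, a finite union of $\CM$-definable sets, hence definable in $\CM$.
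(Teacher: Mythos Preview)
The paper gives no proof of this statement: it is recorded as a Fact with the preamble ``We now collect some of the main results from \cite{Wenc2}'', and is used as a black box thereafter. So there is nothing in the paper to compare your argument against.

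That said, your outline is a fair account of how Wencel's arguments run, and you correctly identify the load-bearing step: the strong monotonicity/continuity theorem for definable $f:M^n\to\bar M$, which is where non-valuationality enters. Your reduction of (3) to the identity $\bar C\cap M^n=C$ plus a decomposition of any $\bar\CM_M$-definable $X$ into iterative convex hulls is the right shape. The one place your sketch is thin is precisely the point you flag in (2): showing that the family of iterative convex hulls (and their boolean combinations) is closed under coordinate projections. Your appeal to Remark \ref{rem-basic cell}(1) gives the homeomorphic projection of a single $\bar C$, but the substantive work is showing that an \emph{arbitrary} projection of $\bar C$ again decomposes into sets of the same form, and that intersections and complements do too; this is where Wencel's argument is genuinely delicate, and ``Part (1) \dots lets one transport the $\CM$-side decomposition to the $\bar M$-side'' hides the real computation. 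Since you explicitly say you would import Wencel's proof for the monotonicity step, the honest move is to do the same for this closure-under-projection step, and then your sketch is complete.
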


In \cite{KerenMSc}, the language of $\bar \CM_A$ was replaced by another one, which we
find more convenient to work with.

\begin{definition} Given $A\sub M$, and an $A$-definable set $X\sub M^n$ in $\CM$, we associate
 to $X$ a predicate symbol $\hat X$. We interpret $\hat X$ in $\bar M^n$ as the topological
 closure of $X$ in $\bar M^n$, denoted by $\cl_{\bar M}(X)$, and let $\CM^*_A$ be the expansion of $\bar M$
 by all $\hat X$, for $X\sub M^n$ definable over $A$.\end{definition}

It was proved in \cite{KerenMSc} that the structures $\bar \CM_M$ and and $\CM^*_M$
have the same definable sets. We re-prove here a more precise version. We first
prove:

\begin{lemma} \label{nice closure transition}
    If $C\subseteq M^n$ is a strong cell
    then $\cl_{\bar{M}}(C)=\cl_{\bar{M}}(\bar C)$.
\end{lemma}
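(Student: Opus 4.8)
My plan is to split the asserted equality into its two inclusions and reduce everything to a density statement, which I would then prove by induction on the construction of the strong cell $C$. The inclusion $\cl_{\bar M}(C)\subseteq\cl_{\bar M}(\bar C)$ is immediate once one checks $C\subseteq\bar C$, and this holds clause by clause in the definition of a strong cell: for a point $\bar C=C$; for an open convex $C\subseteq M$, $\bar C$ is by definition a convex hull of $C$; a graph $\Gamma_f(D)$ sits inside $\Gamma_{\bar f}(\bar D)$ because $D\subseteq\bar D$ and $\bar f$ extends $f$; and a band $(f,g)_D$ sits inside $\bar C$ because $\bar f,\bar g$ extend $f,g$. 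For the reverse inclusion, by monotonicity and idempotence of $\cl_{\bar M}$ it is enough to prove $\bar C\subseteq\cl_{\bar M}(C)$, i.e. that $C$ is dense in $\bar C$; I would prove this by induction on $n$, following the inductive construction of strong cells, and throughout I would argue with basic open boxes $W\times V\subseteq\bar M^{n-1}\times\bar M$ rather than sequences, since $\bar M$ carries the order topology and need not be first countable.

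For the base case $n=1$ there is nothing to do when $C$ is a point, and when $C$ is an open convex subset of $M$ I would observe, using only that $M$ is dense in $\bar M$, that both $\cl_{\bar M}(C)$ and $\cl_{\bar M}(\bar C)$ equal the closed interval of $\bar M$ with endpoints $\inf_{\bar M}C$ and $\sup_{\bar M}C$ (a ray if $C$ is unbounded on one side); in particular $\bar C\subseteq\cl_{\bar M}(C)$. For the inductive step I would write $C$ as $\Gamma_f(D)$ or $(f,g)_D$ over a strong cell $D\subseteq M^{n-1}$ with strongly continuous $f,g\colon D\to\bar M$ and continuous extensions $\bar f,\bar g\colon\bar D\to\bar M$; the induction hypothesis gives $\cl_{\bar M}(D)=\cl_{\bar M}(\bar D)$, so $D$ is dense in $\bar D$. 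Fixing a point $p\in\bar C$ and a basic open box $W\times V$ around it, I would use continuity of $\bar f$ (and, in the band case, $\bar g$) at the $\bar M^{n-1}$-coordinate $x$ of $p$ to shrink $W$ to an open $W'\ni x$ so that $\bar f(W'\cap\bar D)$, resp. $\bar g(W'\cap\bar D)$, is pushed into the relevant subinterval: into $V$ itself in the graph case (where $p=(x,\bar f(x))$), and below the fibre coordinate $y$ of $p$ for $\bar f$ and above it for $\bar g$ in the band case (where $\bar f(x)<y<\bar g(x)$). Then density of $D$ in $\bar D$ produces $x'\in D\cap W'$, and since $\bar f,\bar g$ extend $f,g$ the corresponding point $(x',f(x'))$, resp. $(x',y)$, lies in $C\cap(W\times V)$. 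This places $p$ in $\cl_{\bar M}(C)$ and closes the induction.

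The argument is essentially soft and I do not expect a genuine obstacle. The two points that need attention are that the extensions $\bar f,\bar g$ are continuous on \emph{all} of $\bar D$ --- which is precisely what the hypothesis that $f,g$ are strongly continuous delivers --- and that one stays with the product/order topology (open boxes) rather than sequential language. Density of $M$ in $\bar M$ enters only in the base case; past that, density of $D$ in $\bar D$ propagates through the cell construction using continuity of the extensions alone.
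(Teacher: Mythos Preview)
Your approach is the same as the paper's: both reduce to showing $\bar C\subseteq\cl_{\bar M}(C)$ by induction on the cell construction, using continuity of the extensions $\bar f,\bar g$ together with density of the base cell. There is, however, one slip in your band case. Having found $x'\in D\cap W'$ with $f(x')<y<g(x')$, you assert that $(x',y)\in C$; but $C=(f,g)_D\subseteq M^n$, so this requires $y\in M$, which need not hold since $y$ is the last coordinate of an arbitrary point of $\bar C\subseteq\bar M^n$. The fix is immediate: $V\cap(f(x'),g(x'))$ is a nonempty open subset of $\bar M$, so by density of $M$ in $\bar M$ it contains some $y'\in M$, and then $(x',y')\in C\cap(W\times V)$. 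In particular, your closing remark that density of $M$ in $\bar M$ enters only in the base case is not quite right---it is needed again in the band step to land inside $M$ in the last coordinate. (The paper's proof glosses over the same point, simply asserting that $(f,g)_D$ is dense in $(\bar f,\bar g)_{\bar D}$.)
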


\begin{proof}
      Since $C\subseteq\bar C$ it suffices to show
     that $\bar C\subseteq \hat{C}$ for every strong cell $C$.  We use induction on $n$.

    If $C\subseteq M$ the claim is obvious.  Now, suppose that $\bar C\subseteq\hat{C}$ for some strong cell
$C$ and
    let $f_1,f_2<f_3$ be strongly continuous such that the range of $f_1$ is in $M$.  We let $C_1=\Gamma(f_1)_C$
      and  $C_2=(f_2,f_3)_C$ be the associated strong cells, and will show that
      $\bar C_1\subseteq\hat{C_1}$ and $\bar C_2\subseteq\hat{C_2}$.

    Let $(c,m)\in\bar C\times\bar{M}$.  If $(c,m)\in\bar C_1$ then $\bar{f}_1(c)=m$.  But then since
     $c\in\bar{C}$ and
    $\Gamma(f_1)_C$ is dense in $\Gamma(\bar{f}_1)_{\bar C}$ (because $C$ is dense $\bar C$) then $(c,m)$
    is a limit
    point of $f_1$ and therefore $(c,m)\in\hat {C_1}$.  If $(c,m)\in\bar C_2$ then $\bar{f}_2(c)<m<\bar{f}_3(c)$,
    and again since $c\in\bar{C}$ and $(f_2,f_3)_C$ is dense in $(\bar{f}_2,\bar{f}_3)_{\bar C}$ then $(c,m)$ is
    a limit point of $(f_2,f_3)_C$ and therefore $(c,m)\in\hat{C_2}$.
\end{proof}

We can now prove:
\begin{proposition}\label{same language} For every $A\sub M$, the (o-minimal) structures $\CM^*_A$ and $\bar \CM_A$
have the same $\0$-definable sets (so in particular the same definable sets).

\end{proposition}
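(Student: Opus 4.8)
The plan is to show that every atomic relation of $\CM^*_A$ is $\emptyset$-definable in $\bar\CM_A$ and conversely; since the two structures have the same universe $\bar M$, an induction on formulas then yields that they have the same $\emptyset$-definable sets, and naming parameters upgrades this to the same definable sets. Throughout I use that topological closure, interior and ``fibrewise extremum'' in $\bar M^k$ are defined from the order $<$, and that $<$ is among the definable relations of both structures (it is atomic in $\bar\CM_A$, and in $\CM^*_A$ one gets $\leq$, hence $<$, from $\widehat{\{x<y\}}=\cl_{\bar M}(\{x<y\})$).

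For the first direction, let $\hat X$ be an atomic relation of $\CM^*_A$, with $X\subseteq M^n$ an $A$-definable set of $\CM$ and $\hat X=\cl_{\bar M}(X)$. By Fact \ref{Facts}(1) we may write $X=C_1\cup\dots\cup C_m$ with each $C_i$ a strong cell over $A$, and then, using Lemma \ref{nice closure transition},
\[ \cl_{\bar M}(X)=\bigcup_{i=1}^m\cl_{\bar M}(C_i)=\bigcup_{i=1}^m\cl_{\bar M}(\bar C_i). \]
Each $\bar C_i$ is an atomic relation of $\bar\CM_A$, so $\cl_{\bar M}(\bar C_i)$ — and hence $\hat X$ — is $\emptyset$-definable in $\bar\CM_A$. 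The detour through $\bar C_i$ is essential, since $M$, and thus $X$ itself, is typically not definable in the o-minimal structure $\bar\CM_A$.

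For the converse I show by induction on the construction of a strong cell $C\subseteq M^n$ over $A$ that $\bar C$ is $\emptyset$-definable in $\CM^*_A$. If $C$ is a point then $\bar C=C=\hat C$ is itself atomic in $\CM^*_A$; if $C$ is an open convex subset of $M$ then $\bar C$, the convex hull of $C$ in $\bar M$, equals the topological interior of $\cl_{\bar M}(C)=\hat C$. For the inductive step, suppose $C'\subseteq M^k$ is a strong cell over $A$ with $\bar{C'}$ already $\emptyset$-definable in $\CM^*_A$; by Lemma \ref{nice closure transition}, $\bar{C'}\subseteq\cl_{\bar M}(C')$, i.e.\ $C'$ is dense in $\bar{C'}$. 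If $C=\Gamma_f(C')$ for a strongly continuous $f$ with $f(C')\subseteq M$, then using the density of $C'$ in $\bar{C'}$ and the continuity of the extension $\bar f$ on $\bar{C'}$ one checks that
\[ \bar C=\Gamma_{\bar f}(\bar{C'})=\hat C\cap(\bar{C'}\times\bar M), \]
which is $\emptyset$-definable in $\CM^*_A$ ($\hat C$ is atomic since $C\subseteq M^{k+1}$ is $A$-definable). If $C=(f,g)_{C'}$ with $f,g$ strongly continuous and $\bar f<\bar g$ on $\bar{C'}$, the same reasoning gives $\hat C\cap(\bar{C'}\times\bar M)=\{(x,y):x\in\bar{C'},\ \bar f(x)\le y\le\bar g(x)\}$, the closed ``tube'' over $\bar{C'}$, and $\bar C$ is recovered from this set by deleting, in each fibre over $\bar{C'}$, its maximum and its minimum (the graphs of $\bar g$ and $\bar f$), again an $\emptyset$-definable operation in $\CM^*_A$. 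This completes the induction and the proof.

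The step that requires care is the inductive step of the second direction: one has to locate $\bar C$ precisely inside its closure $\hat C$ once $\bar{C'}$ is available, and it is here that Lemma \ref{nice closure transition} (through the density of $C'$ in $\bar{C'}$) together with the continuity of the extensions $\bar f$, $\bar g$ on all of $\bar{C'}$ is used. Everything else is routine bookkeeping with definability.
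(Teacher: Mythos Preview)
Your proof is correct and follows essentially the same approach as the paper: the first direction is identical, and the second direction proceeds by the same induction on strong cells using Lemma \ref{nice closure transition}. The only (minor) difference is in the open-cell step $(f,g)_{C'}$: the paper recovers each boundary function $\bar f$ separately by taking the fibrewise supremum of $\cl_{\bar M}(\{(x,y):x\in C',\,y<f(x)\})$, whereas you work with $\hat C$ itself and strip off the fibrewise extrema to pass from the closed tube to the open one. Both are straightforward uses of the order; just note that when $f=-\infty$ (or $g=+\infty$) the fibre has no minimum (resp.\ maximum), so ``delete the extremum'' should be read as ``delete it if it exists'' --- the paper handles this case with a one-line remark as well.
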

\proof We first show that every atomic set in $\CM^*_A$ is $\0$-definable in $\bar
\CM_A$. So we take an $A$-definable $X\sub M^k$, and consider its closure $\hat
X\sub \bar M^k$. By Fact \ref{Facts}, $X$ can be written as the union
$\bigcup_{i=1}^k C_i$ of strong cells that are definable over $A$ in $\CM$. By Lemma
\ref{nice closure transition}, each $\bar C_i$ is dense in $\cl_{\bar M}(C_i)$. It
follows that $\cl_{\bar M}(X)=\bigcup_{i=1}^k \cl_{\bar M}(\bar C_i)$. Since each
$\bar C_i$ is $\0$-definable in $\bar \CM_A$, and the closure operation is itself
definable, it follows that $\cl_{\bar M}(X)$ is $\0$-definable in $\bar \CM_A$.

For the other inclusion, we need to see that for every  strong cell $C\sub M^n$ that
is definable over $A$, the set $\bar C$ is $\0$-definable in $\CM^*_A$. This is done
by induction on $n$.

    For $0$-cells  in $\CM$ this is clear. If $C\sub M$ is a $1$-cell then $\bar C$
    is an open interval $(a,b)$ in $\bar M$. The interval $[a,b]$ is $\0$-definable in $\CM^*_A$,
     hence so is $\bar C$.
     So we now assume that we have proved the result for all strong cells in $M^n$ and we prove it
      for strong-cells in $M^{n+1}$. Let $C\subseteq M^n$ be a strong cell defined over $A$.
       Let $f: C\to M$ be a strongly continuous function definable in $\CM$ over $A$, and let $Y$ be
        $\Gamma_f$, the graph of $f$. Then $\bar Y:=\{(x,\bar f(x)): x\in \bar C\}$.
        We have to show that $\bar Y$ is $\0$-definable in $\CM^*_A$. As $\bar f$ is continuous we get that
         $\bar Y=\cl_{\bar M}(\Gamma_f)\cap  (\bar C\times \bar M)$, which is
         $\0$-definable in $\CM^*_A$
          by the inductive hypothesis.

    Now let $f,g: C\to \bar M$ be $A$-definable strongly continuous functions in $\CM$, with $f<g$
    (unlike the above, we cannot
    assume here that they take values in $M$). We have
    to show that the iterative convex hull of $Y:=(f,g)_C$ is $\0$-definable in $\CM_A^*$. By
    definition,
    \[\bar Y=\{(x,y): x\in \bar C, \bar f(x)< y< \bar g(x)\}.\]
    Since, by induction $\bar C$ is $\0$-definable in $\CM^*_A$, it will suffice to show that $\bar f$
    (and similarly $\bar g$)
    is
    $\0$-definable in $\CM^*_A$. If $f$ is the constant function $-\infty$, then there is nothing to prove.
    So we assume this is not the case. By definition, the set
    \[F:=\{(x,y): x\in C, y<f(x)\} \]
    is $A$-definable in $\CM$. For every $c\in \bar C$ let
    \[s(c):=\sup\{y\in \bar {M}:(c,y)\in \cl_{\bar M} (F)\}.  \]
    Since $f$ is strongly continuous, $s(c)$ is well defined, and by definition it coincides with
    $f$ on $C$. Since $C$ is dense in $\bar C$ and $\bar f$ is the unique continuous extension of
     $f$ to $\bar C$, necessarily $s=\bar f$, and as $s$ is $\0$-definable in $\CM^*_A$, we are
     done.\qed

From now on we can use interchangeably the structures $\CM^*_A$ and $\bar \CM_A$.
 Notice however, that the language of $\bar \CM_A$
 depends on the specific structure $\CM$,
thus for different $\CM$ and $\CN$, even if elementarily equivalent, the structures
$\bar \CM_M$ and $\bar \CN_N$ are of different signature. One of the initial goals
of this work was to obtain a uniform signature by showing that the definable sets in
$\bar \CM_\0$ ad $\bar \CM_M$ are the same.  We need the following observations

\begin{proposition}\label{observation} \begin{enumerate}
\item Every $\0$-definable set in $\CM$ can be written as a boolean combination of
$\0$-definable sets each of which is the closure of an open $\0$-definable set. In
particular, this is true if $\CM$ is o-minimal.

\item The o-minimal structure $\CM_M^*$ eliminates quantifiers. Moreover, it is
sufficient to take as atomic relations all $\cl_{\bar M}(X)$ with $X\sub M^n$ an
open definable set.
\end{enumerate}
\end{proposition}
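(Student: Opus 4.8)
The plan is to establish (1) first and then deduce (2). For (1), I would prove by induction on $n$ that every $\0$-definable $X\subseteq M^n$ lies in the boolean algebra $\mathcal{B}_n$ generated by the sets $\cl_{M^n}(U)$ with $U\subseteq M^n$ open and $\0$-definable (the ambient topology being the order topology, which is well behaved because, by \cite{MacMaSt}, $\CM$ expands a divisible, hence densely ordered, group). For $n=1$ this is a direct check using density of the order: $\{a\}=\cl_M\{x>a\}\cap\cl_M\{x<a\}$, and every convex definable subset of $M$ is either already a closure of an open definable set (e.g.\ a closed ray, or a convex set with an endpoint that is a cut not realized in $M$, which turns out to be clopen) or differs from one by finitely many points. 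For the inductive step I fix $n$ and induct on $d=\dim X$: by Fact \ref{Facts}(1) write $X=\bigsqcup_i C_i$ with each $C_i$ a strong cell over $\0$, and set $X_0=\bigcup_{\dim C_i=d}C_i$; then $\dim(X\setminus X_0)<d$, so $X\setminus X_0\in\mathcal{B}_n$ by the inner induction, and it suffices to treat a single top-dimensional strong cell $C$. If $d=n$ then $C$ is open by Remark \ref{rem-basic cell}(1), and $C=\cl_{M^n}(C)\setminus\partial_{M^n}(C)$ with $\dim\partial_{M^n}(C)<n$, so $C\in\mathcal{B}_n$ by the inner induction.

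If $d<n$, by Remark \ref{rem-basic cell}(1) I may apply a coordinate permutation $\sigma$ of $M^n$ (a homeomorphism preserving $\mathcal{B}_n$) so that $\sigma(C)=\Gamma_h$ is the graph of a strongly continuous $\CM$-definable map $h=(h_1,\dots,h_{n-d})\colon D\to M^{n-d}$ over an open strong cell $D\subseteq M^d$. The crux is the identity
\[
\Gamma_h=(D\times M^{n-d})\cap\bigcap_{j=1}^{n-d}\left(\cl_{M^n}\{(z,w):z\in D,\ w_j>h_j(z)\}\cap\cl_{M^n}\{(z,w):z\in D,\ w_j<h_j(z)\}\right).
\]
Each set being closed off is open (by continuity of $h$ on the open set $D$ together with density of the order) and $\0$-definable in $\CM$ (this is just the defining condition for $h$ to be definable), while $D\times M^{n-d}=(\bar D\times M^{n-d})\setminus(\partial D\times M^{n-d})\in\mathcal{B}_n$, because $\bar D$ and $\partial D$ are definable subsets of $M^d$, hence in $\mathcal{B}_d$ by the outer induction, and $\cl_{M^d}(V)\times M^{n-d}=\cl_{M^n}(V\times M^{n-d})$ for open $V$. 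Hence $\Gamma_h\in\mathcal{B}_n$, so $C\in\mathcal{B}_n$, closing both inductions (the o-minimal case is just the case where strong cells are ordinary cells). The step I expect to be most delicate is the verification of the displayed identity, in particular the inclusion $\supseteq$: a point of the right-hand side has its first $d$ coordinates in the \emph{open} cell $D$, where $h$ is continuous, so the bounds $w_j\ge h_j(z_0)$ and $w_j\le h_j(z_0)$ imposed by the two closures force $w=h(z_0)$. Intersecting with $D\times M^{n-d}$ is precisely the device that sidesteps the possibly multivalued limiting behaviour of $h$ along $\partial D$, which would otherwise demand a separate analysis of $\limsup$ and $\liminf$ of $h$ there; coordinating this with the outer induction on $n$ (which supplies the factor $D\times M^{n-d}$) is the other point requiring care.

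For (2): by Proposition \ref{same language} and Fact \ref{Facts}(2), $\CM^*_M=\bar\CM_M$ is an o-minimal structure, which I regard as relational — the order, and the group operation on $\bar M$ extending that of $M$, being present through their graphs, which are among the $\hat X$. Applying (1) to this o-minimal structure, every $\0$-definable $Y\subseteq\bar M^k$ is a boolean combination of sets $\cl_{\bar M}(U)$ with $U\subseteq\bar M^k$ open and $\0$-definable in $\CM^*_M$. For such $U$, since $M^k$ is dense in $\bar M^k$ and $U$ is open, $U\cap M^k$ is dense in $U$, so $\cl_{\bar M}(U)=\cl_{\bar M}(U\cap M^k)$; moreover $U\cap M^k$ is open in $M^k$ and, by Fact \ref{Facts}(3), definable in $\CM$. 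Writing $Z=U\cap M^k$, an open definable set, I get $\cl_{\bar M}(U)=\cl_{\bar M}(Z)=\hat Z$, which is one of the atomic relations of $\CM^*_M$. Thus every $\0$-definable set of $\CM^*_M$ is a boolean combination of relations $\hat Z$ with $Z\subseteq M^k$ open and definable; since in a relational language quantifier elimination for a structure is exactly the statement that every definable set is a boolean combination of the sets named by atomic formulas, this yields simultaneously the quantifier elimination and the ``moreover'' clause.
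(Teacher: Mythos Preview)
Your proof is correct and follows essentially the same strategy as the paper: reduce to strong cells via Fact~\ref{Facts}(1), express the graph of a strongly continuous map as an intersection of closures of the open ``half-spaces'' $\{w_j>h_j(z)\}$ and $\{w_j<h_j(z)\}$, and for (2) apply (1) to the o-minimal $\CM^*_M$ together with density of $M^k$ in $\bar M^k$ and Fact~\ref{Facts}(3). The only organizational difference is that the paper treats an arbitrary open $X$ in one stroke via $X=\cl(X)\setminus\bigl(\cl(X)\cap\cl(M^n\setminus\cl(X))\bigr)$, whereas you run an explicit double induction (on $n$ and on $\dim X$) and handle open cells by invoking the inner induction on $\partial C$; your route is slightly longer but arguably more careful, and your displayed identity for $\Gamma_h$ together with the outer induction supplying $D\times M^{n-d}\in\mathcal B_n$ is exactly the right bookkeeping.
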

\proof (1) We first prove the result for an arbitrary definable open set  $X\sub  M^n$.
 Note that  $X=\cl(X)\setminus \partial(X)$
(here $\partial(X)$ is the boundary of $X$), and then that $$\partial(X)=\cl(X)\cap
\cl(M^n\setminus \cl(X)).$$ The set on the right is of the desired form, so we are
done.

For an arbitrary definable $X\sub M^n$,  we apply strong cell decomposition, so we
may assume that $X$ is a cell. Hence, $X$ is either a point or the graph of a
definable map $f$ from an open cell $C\sub M^{n-k}$ into $M^k$ (the $n-k$
coordinates need not be the first ones), and each of the coordinate functions of $f$
are strongly continuous.

 Thus it is sufficient to show that the graph of each strongly continuous $f_i:C\to M$
 is definable in the desired form. By the continuity of $f_i$, such a graph can be written as the complement
 in $C\times M$ of the open set:
 $$\{(x,y)\in C\times M:y>f_i(x)\}\cup \{(x,y)\in C\times M: y<f_i(x)\}.$$
Since each of the open sets can be defined in the required form, so is the graph of
$f_i$, and hence so is $X$.\qed

For (2), we first apply (1) to the o-minimal structure $\CM^*_M$ and reduce the
problem to definable sets $\hat X\sub \bar M^n$, which are the closure of an open
definable set $U\sub \bar M^n$. Since $M^n$ is dense in $\bar M^n$, $\cl_{\bar
M}(U)=\cl_{\bar M}(U\cap M^n)$. By fact \ref{Facts}, the set $U\cap
\bar M^n$ is definable in $\CM$ (possibly over parameters). We now apply (1). \qed \\

In the text the first part of the above proposition will be applied, mostly, when $\CM$ is, in fact, o-minimal.

\begin{lemma}\label{fibres}
    Let $C\subseteq M^{k+n}$  be a strong cell, $a\in \pi(C)$, where $\pi$ is the projection
    onto the first $k$-coordinate. Let $C_a=\{x\in M^n: (a,x)\in C\}$. Then
    \begin{enumerate}
        \item $C_a$ is a strong cell.
        \item $(\bar C)_a=\overline{ C_a}$.
    \end{enumerate}

\end{lemma}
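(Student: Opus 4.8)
The plan is to prove both parts simultaneously by induction on $n$, following the inductive structure of the definition of a strong cell. For $n=0$ there is nothing to prove. For the inductive step, recall that a strong cell $C\subseteq M^{k+n}$ is built over a strong cell $C'\subseteq M^{k+n-1}$, together with strongly continuous functions, as either a graph $\Gamma_f(C')$ or a band $(f,g)_{C'}$. First I would reorganize the coordinates so that we may think of $C'$ as sitting inside $M^{k+(n-1)}$, i.e.\ the last coordinate is the ``new'' one; the projection $\pi$ onto the first $k$ coordinates factors through the projection of $C$ onto $C'$. The key point is that slicing commutes with this construction: if $f\colon C'\to\bar M$ is strongly continuous and $a\in\pi(C)=\pi(C')$, then the restriction $f_a\colon (C')_a\to \bar M$, $x\mapsto f(a,x)$, is again strongly continuous, because $\bar f_a=(\bar f)|_{\overline{(C')_a}}$ is its continuous extension (using the inductive hypothesis $\overline{(C')_a}=(\bar C')_a$ for the domain) and the dichotomy $f(C')\subseteq M$ or $f(C')\subseteq\bar M\setminus M$ passes to the restriction. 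Hence $(\Gamma_f(C'))_a=\Gamma_{f_a}((C')_a)$ and $((f,g)_{C'})_a=(f_a,g_a)_{(C')_a}$ are strong cells, giving part (1).

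For part (2), I would simply unwind both sides of the claimed equality against the inductive definition of the iterative convex hull. In the graph case, $(\bar C)_a=\{(x,y): (a,x)\in\bar C',\ y=\bar f(a,x)\}=\{(x,y): x\in (\bar C')_a,\ y=\bar f_a(x)\}$, which by the inductive hypothesis $(\bar C')_a=\overline{(C')_a}$ and the identity $\bar f_a=\overline{f_a}$ established above is exactly $\Gamma_{\overline{f_a}}(\overline{(C')_a})=\overline{\Gamma_{f_a}((C')_a)}=\overline{C_a}$. The band case is identical, replacing the equation $y=\bar f$ by the pair of strict inequalities $\bar f(a,x)<y<\bar g(a,x)$ and using that $\bar f_a<\bar g_a$ on $\overline{(C')_a}$, which follows from the corresponding strict inequality for $\bar f,\bar g$ on $\bar C'$ restricted to the slice.

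The only genuine content — and the step I expect to be the main obstacle — is verifying that strong continuity is preserved under slicing, i.e.\ that $\overline{f_a}$ exists and equals $(\bar f)|_{\overline{(C')_a}}$. This is not quite automatic: one must check that the continuous extension $\bar f$ of $f$, when restricted to the sub-slice $(\bar C')_a=\overline{(C')_a}$, is still the continuous extension of $f_a=f|_{(C')_a}$, which amounts to the density of $(C')_a$ in $(\bar C')_a$ (part (2) of the inductive hypothesis) together with uniqueness of continuous extensions to a dense subset in the Hausdorff space $\bar M^{n-1+1}$. The membership dichotomy $f_a((C')_a)\subseteq M$ or $\subseteq\bar M\setminus M$ is then immediate from the same property for $f$. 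Once this is in place, both parts of the lemma fall out of the inductive definitions with no further work.
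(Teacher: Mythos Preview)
Your proposal is correct and is essentially the same approach as the paper's: both arguments unwind the inductive definition of a strong cell and its iterative convex hull, checking that slicing by a fixed initial coordinate commutes with the graph/band constructions. The paper organizes this as a reduction to $k=1$ (then iterates in $k$), declaring the $k=1$ case ``straightforward from the definition of a strong cell''; you instead carry out the induction on $n$ directly for arbitrary $k$, spelling out the key step that strong continuity survives restriction to a slice via density of $(C')_a$ in $(\bar C')_a=\overline{(C')_a}$ and uniqueness of continuous extensions. Your version is more explicit but not genuinely different in content.
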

\proof It is sufficient to prove the result for $k=1$ (and then proceed by
induction). This is straightforward from the definition of a strong cell.\qed

\begin{theorem}\label{elitzur} For every $A\sub M$, the structures $\bar \CM_M$ and $\bar \CM_A$ have
the same definable sets.

\end{theorem}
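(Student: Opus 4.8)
The plan is to show that every $A$-definable set in $\bar\CM_A$ is already definable in $\bar\CM_\0$, which by Proposition \ref{same language} it suffices to argue at the level of the $*$-languages: every atomic set $\cl_{\bar M}(X)$, for $X\sub M^n$ an $A$-definable set of $\CM$, is definable in $\CM^*_\0$. By Proposition \ref{observation}(1) we may assume $X$ is open, and by strong cell decomposition (Fact \ref{Facts}(1)) it is enough to treat a single open strong cell $C\sub M^n$ defined over $A$; by Lemma \ref{nice closure transition} the target is then the $\0$-definability of $\bar C$ in $\CM^*_\0$. Write $A=(a_1,\dots,a_r)$. The idea is standard: replace parameters by variables. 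Let $\phi(\x,\bar t)$ be an $\CM$-formula (without parameters) such that $\phi(\x,A)$ defines $C$, and consider the $\0$-definable family $\CF=\{\phi(\x,\bar t):\bar t\in M^r\}$ in $\CM$. The set of those $\bar t$ for which $\phi(\x,\bar t)$ is an open strong cell is $\0$-definable in $\CM$ (being a strong cell of a fixed dimension is a definable-in-families condition, by Wencel's uniform strong cell decomposition), so after intersecting we get a $\0$-definable family all of whose members are open strong cells, containing $C$ as the member with parameter $A$.

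The key step is to produce, uniformly in $\bar t$, the iterative convex hull $\overline{\phi(\x,\bar t)}$ as a $\0$-definable-in-$\CM^*_M$ family, and then specialize $\bar t\mapsto A$. For this I would revisit the inductive construction in the proof of Proposition \ref{same language}, but carried out with $\bar t$ as extra parameters ranging over the base $M^r$: the functions $s(c)=\sup\{y:(c,y)\in\cl_{\bar M}(F)\}$ appearing there are defined from $F$ by a closure operation and a supremum, both of which are uniform; so if $F_{\bar t}$ is the $\0$-definable-in-$\CM$ family of the sets $\{(\x,y):y<f_{\bar t}(\x)\}$, the family $s_{\bar t}=\bar f_{\bar t}$ is $\0$-definable in $\CM^*_M$, and likewise for the upper function. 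Running the induction on $n$, one obtains a $\0$-definable-in-$\CM^*_M$ family $\{\overline{\phi(\x,\bar t)}:\bar t\in M^r,\ \phi(\x,\bar t)\text{ an open strong cell}\}$; note the parameter variables $\bar t$ stay in $M$, which is exactly where $\CM^*_M$ (equivalently $\bar\CM_M$) "lives", so there is no circularity. Specializing at $\bar t=A$ gives that $\bar C$ is $\0$-definable in $\CM^*_M$.

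Finally I must pass from $\CM^*_M$ to $\CM^*_\0$. Here is where Proposition \ref{observation}(2) does the work: $\CM^*_M$ eliminates quantifiers, and it suffices to use as atomic relations the sets $\cl_{\bar M}(U)$ with $U\sub M^n$ \emph{open} and definable in $\CM$ — but again by Proposition \ref{observation}(1) applied inside $\CM$, each such $\cl_{\bar M}(U)$ is a boolean combination of closures of open $\0$-definable sets of $\CM$ together with boundary terms, and ultimately the atomic data needed are closures of $\0$-definable sets, i.e.\ atomic sets of $\CM^*_\0$. More precisely: by the previous paragraph $\bar C$ lies in the structure generated over $\bar M$ by $\{\cl_{\bar M}(U):U\ \0\text{-definable in }\CM\}$, so $\bar C$ is $\0$-definable in $\CM^*_\0$, and hence, by Proposition \ref{same language}, in $\bar\CM_\0$. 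The "Moreover" of Theorem \ref{signature} (elementary equivalence) follows by the same uniformity, expressing each defining scheme by a single formula scheme of $\CM$.

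The step I expect to be the main obstacle is the uniform version of the Proposition \ref{same language} induction: one must check that \emph{strong continuity} of $f_{\bar t}$ holds uniformly on the relevant locus of $\bar t$ and that the extensions $\bar f_{\bar t}$ assemble into one definable family in $\CM^*_M$ — in particular that the "sup of a closure" description of $\bar f$ is valid for \emph{every} member of the family simultaneously, not just for the special parameter $A$. This is where Wencel's results on uniform strong cell decomposition (Fact \ref{Facts}(1), read with parameters) and the uniform definability of the iterative convex hull have to be invoked carefully.
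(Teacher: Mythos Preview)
Your step 4 is circular. You have shown (modulo the uniformity issues you flag) that $\bar C$ is $\0$-definable in $\CM^*_M$, and you then want to descend to $\CM^*_\0$. But ``$\CM^*_M$ and $\CM^*_\0$ have the same definable sets'' is, via Proposition~\ref{same language}, exactly the statement of Theorem~\ref{elitzur}. Your attempted descent invokes Proposition~\ref{observation}(1), but that proposition only tells you that a $\0$-definable set in $\CM$ is a boolean combination of closures of $\0$-definable open sets; the open $U$'s appearing in your step~4 are definable over parameters in $M$, not $\0$-definable, so the proposition does not apply. Writing $U=V_{\bar a}$ for a $\0$-definable $V$ does not help either: taking $\cl_{\bar M}$ and taking a fiber over $\bar a\in M^r$ do not commute in general, so you cannot recover $\cl_{\bar M}(U)$ from $\cl_{\bar M}(V)$ by fibering.

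The repair is to skip step~4 entirely and make step~3 land directly in $\CM^*_\0$. The missing ingredient is Lemma~\ref{fibres}: write the $A$-definable strong cell $C$ as a fiber $C=D_a$ of a $\0$-definable strong cell $D\subseteq M^{n+r}$ (obtained by strong cell decomposition of the total space of your family); then Lemma~\ref{fibres} gives $\bar C=\overline{D_a}=(\bar D)_a$, and $\bar D$ is an atomic relation of $\bar\CM_\0$. This is short and avoids the ``sup of closure'' induction you were worried about. The paper takes a different route altogether: rather than chasing individual cells, it first proves that $\bar\CM_\0$ and $\bar\CM_M$ leave the same traces on $M^n$ (using only that $\bar C_i\cap M^n=C_i$ and Fact~\ref{Facts}(3)), and then invokes a general lemma (Lemma~\ref{general-obs}) saying that two o-minimal expansions of the same dense order which leave the same traces on a dense subset must have the same definable sets. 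That approach sidesteps the fiber/closure commutation issue completely.
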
 \proof Absorbing $A$ to the language we, at this stage, assume that $A=\0$.
We first claim that for every
$n\in \mathbb N$, we have
\begin{equation}\label{eq3}\{Y\cap M^n:Y\sub \bar M^n \mbox{
definable in } \bar \CM_\0\}=\{Y\cap M^n:Y\sub \bar M^n \mbox{
definable in } \bar \CM_M\}.\end{equation}

Since $\bar \CM_\0$ is a reduct of $\bar \CM_M$ it is sufficient to prove the
right-to-left inclusion. We first show: For every $\0$-definable $X\sub M^n$, there
exists a $\0$-definable $Y\sub \bar M^n$ in $\bar \CM_\0$ such that $Y\cap M^n=X$.
Indeed, $X$ has a decomposition into $\0$-definable strong cells (see Fact
\ref{Facts}), and for each $\0$-definable strong cell $C_i$ we have $\bar C_i\cap
M^n=C_i$, so $Y=\bigcup_{i}\bar C_i$ is the desired set.

Now, let $Z\sub M^n$ be definable in $\bar \CM_M$. By Fact \ref{Facts}, $Z\cap M^n$
is definable in $\CM$, possibly over parameters. Hence, it is of the form $X_a$, for
some $X\sub M^{n+k}$ which is $\0$-definable in $\CM$ and $a\in M^k$. By what we
just shown, there is $Y\sub \bar M^k$ which is $\0$-definable in $\bar \CM_\0$, such
that $X=Y\cap M^{n+k}$. Hence,
$$Z=(Y\cap M^{n+k})_a=Y_a\cap M^n,$$ and $Y_a$ is definable in
$\bar \CM_\0$. This ends the proof of (\ref{eq3}).

We now make the following general observation:

\begin{lemma}\label{general-obs} Let $\la N,<\ra$ be a densely
ordered set, with $M\sub N$ a dense subset. Assume that $\CN_1$, $\CN_2$ are two
o-minimal expansions of $\la N,<\ra$ with the property that for every $n\in \mathbb
N$, we have

\begin{equation}\label{eq4}\{Y\cap M^n:Y\sub \bar M^n \mbox{
definable in } \CN_1\}=\{Y\cap M^n:Y\sub \bar M^n \mbox{ definable
in } \CN_2\}.\end{equation}

Then $\CN_1$ and $\CN_2$ have the same definable sets.
\end{lemma}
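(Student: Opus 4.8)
The plan is to show that every definable set in $\CN_1$ is definable in $\CN_2$ and vice versa; by symmetry it suffices to prove one direction. So fix an $\CN_1$-definable set $Z\subseteq N^n$. By o-minimality of $\CN_1$, we have a cell decomposition, so we may reduce to the case where $Z$ is a cell; in fact, arguing as in Proposition \ref{observation}(1) applied inside the o-minimal structure $\CN_1$, it suffices to treat the case where $Z$ is the closure (in $N^n$) of an $\CN_1$-definable open set $U\subseteq N^n$. Since $M$ is dense in $N$, and hence $M^n$ is dense in $N^n$, an open set is determined by its trace on $M^n$: concretely, $\cl_N(U)=\cl_N(U\cap M^n)$, so it is enough to recover the closure of $U\cap M^n$ inside $\CN_2$.

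Now $U\cap M^n$ is the trace on $M^n$ of the $\CN_1$-definable set $U$, so by hypothesis (\ref{eq4}) there is an $\CN_2$-definable set $Y\subseteq N^n$ with $Y\cap M^n=U\cap M^n$. Then $\cl_N(Y\cap M^n)=\cl_N(U\cap M^n)=\cl_N(U)=Z$. Since the closure operation is definable in any o-minimal expansion of a dense linear order (the closure of a definable set is definable, uniformly), $\cl_N(Y\cap M^n)$ is definable in $\CN_2$. Hence $Z$ is $\CN_2$-definable, as required. The same argument with the roles of $\CN_1$ and $\CN_2$ interchanged gives the reverse inclusion, and the two together establish that $\CN_1$ and $\CN_2$ have the same definable sets.

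The main point to be careful about is the reduction step: I want to express an arbitrary $\CN_i$-definable set as a Boolean combination of closures of $\CN_i$-definable open sets, and this is exactly the content of Proposition \ref{observation}(1), which applies to any o-minimal structure (in particular to $\CN_1$ and $\CN_2$, which expand a dense linear order and are assumed o-minimal). A secondary subtlety is that the passage $U\rightsquigarrow U\cap M^n\rightsquigarrow Y$ may not preserve openness of $Y$; this does not matter, since we only use $Y$ through its trace on $M^n$ and then take a closure, and $\cl_N(Y\cap M^n)$ depends only on $Y\cap M^n$. Once the reduction to closures of open sets is in place, the rest is a direct application of the density of $M^n$ in $N^n$ together with hypothesis (\ref{eq4}); I expect no further obstacles.
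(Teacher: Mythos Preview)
Your overall strategy matches the paper's: reduce via Proposition~\ref{observation}(1) to closures of open definable sets, then use density of $M^n$ in $N^n$ to pass through traces. There is, however, a genuine gap at the step where you conclude that $\cl_N(Y\cap M^n)$ is definable in $\CN_2$. Your justification is that ``the closure of a definable set is definable,'' but $Y\cap M^n$ is \emph{not} a definable set in $\CN_2$: only $Y$ is, and $M$ is not assumed to be $\CN_2$-definable, so the closure operator of $\CN_2$ does not apply to $Y\cap M^n$ as written. Your remark that $\cl_N(Y\cap M^n)$ depends only on $Y\cap M^n$ is correct but addresses well-definedness, not definability.

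The paper closes exactly this gap by first upgrading (\ref{eq4}) to the statement (\ref{eq5}) that the traces on $M^n$ of \emph{open} definable sets in $\CN_1$ and in $\CN_2$ already coincide. Once the witness $Y$ can be taken open in $N^n$, density gives $\cl_N(Y)=\cl_N(Y\cap M^n)$, and $\cl_N(Y)$ is visibly $\CN_2$-definable. An alternative repair, staying closer to your line, is to show $\cl_N(Y\cap M^n)=\cl_N(\inte_N Y)$ directly: write $Y=Y_0\cup Y_1$ with $Y_0=\inte_N Y$ and $\dim_{\CN_2} Y_1<n$ (cell decomposition in $\CN_2$); since $Y\cap M^n=U\cap M^n$ is open in $M^n$, every $x\in Y_1\cap M^n$ admits arbitrarily small boxes $B$ with $B\cap M^n\sub Y$, and as $\cl_N(Y_1)$ has empty interior each such $B$ meets $Y_0\cap M^n$, whence $Y_1\cap M^n\sub\cl_N(Y_0)$ and $\cl_N(Y\cap M^n)=\cl_N(Y_0)$ is $\CN_2$-definable. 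Either route fills the gap; without one of them your argument is incomplete.
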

\proof It easily follows from the assumptions that we have
\begin{equation}\label{eq5} \{Y\cap M^n:Y\sub \bar M^n \mbox{ open definable in
} \CN_1\}=\{Y\cap M^n:Y\sub \bar M^n \mbox{ open definable in }
\CN_2\}.\end{equation}

By Proposition \ref{observation} (1), it is enough to know that for every open
$U\sub N^n$, the set $\cl(U)$ is definable in $\CN_1$ if and only if it is definable
in $\CN_2$. However, since $M$ is dense in $N$, it is enough to consider sets of the
form $\cl(U\cap M^n)$. By (\ref{eq5}), both collections of sets of the form $U\cap
M^n$, where $U$ is definable in either $\CN_1$ or in $\CN_2$, are the same..\qed

In order to prove Theorem \ref{elitzur}, we apply Lemma \ref{general-obs} to the structures $\bar \CM_\0$ and $\bar\CM_M$ using (\ref{eq3}).

\qed

\section{The structure $\CM_A$ and elementary extensions}

{\em Again, we let $\CM$ be a fixed non-valuational structure }.
From now on we shall work with $\CM^*_A$ rather than $\bar \CM_A$.

\subsection{The canonical completion and elementary extensions}
   Let $\CN$ be an
elementary extension of $\CM$. Every definable cut $C$ in $\CM$ has a natural
realization $C(\CN)$  in $\CN$ and so $\bar M$ can be embedded into $\bar N$. Under
this embedding, if $n\in \bar N$ is the supremum of a cut in $N$ which is definable
over some $A\sub M$ then $n$ is already in $\bar M$. We have:
\[
  \xymatrix{
  \CN \ar[r]^\iota \ar@{}[d]|-*[@]{\curlyvee} & \bar \CN \\ \CM \ar[r]^\iota
     & \bar \CM\ar[u]\\
  }
\]

Where $\iota$ is the natural embedding of $(M,<)$ in $(\bar M, <)$.
We now fix an arbitrary $A\sub M$ and consider the structures $\CM^*_A$ and $
\CN^*_A$. Both structures are in the language $\CL^*_A$, and we claim that $\CM^*_A$
is a substructure of $\CN^*_A$:  Indeed, first note that for {\em a fixed} $x\in
\bar M^n$, and $\epsilon>0 $ in $M$, the set $B(x,\epsilon)\cap M^n=\{y\in M^n:
|x-y|<\epsilon\}$ is definable in $\CM$ and moreover, it is uniformly definable as
$\epsilon$ varies in $M_{>0}$ ($x$ still fixed). It easily follows that for $x\in
\bar M^n$, being in the closure of a definable $X\sub M^n$ is a first order
property. Namely, for $x\in \bar M^n$,
\[x\in \cl_{\bar M}(X(M)) \Leftrightarrow x\in \cl_{\bar N}(X(N)).\] Said
differently, $\hat X(N)\cap \bar M^n=\hat X(M)$, so $\CM^*_A$ is a substructure of
$\CN^*_A$.

Our goal is to show that $\CM^*_A$ is in fact an elementary substructure of
$\CN^*_A$.We do that in several steps.

\begin{lemma}\label{claim1.1} Assume that $A\sub M$ and that $\CM$ is $|A|^+$-saturated. If $Y\sub
\bar M^n$ is $\0$-definable in $\CM_A^*$ then $Y\cap M^n$ is $A$-definable in $\CM$.
\end{lemma}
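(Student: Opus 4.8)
The plan is to exploit the correspondence between $\CM^*_A$ and the strong-cell-based structure $\bar\CM_A$ established in Proposition \ref{same language}, together with the fact that definability in $\bar\CM_A$ is, after quantifier elimination, controlled by iterative convex hulls of $A$-definable strong cells. First I would reduce to the case where $Y$ is quantifier-free definable in $\CM^*_A$: by Proposition \ref{observation}(2) the o-minimal structure $\CM^*_M$ eliminates quantifiers, but we are working with $\CM^*_A$, so one should rather use Proposition \ref{same language} to pass to $\bar\CM_A$ and observe that there, by Fact \ref{Facts}(1)–(2) and the usual o-minimal cell decomposition relative to the $A$-definable atoms, any $\0$-definable $Y\sub\bar M^n$ is a finite boolean combination of iterative convex hulls $\bar C$ of strong cells $C\sub M^m$ defined over $A$ (projected/fibered appropriately). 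Since the operations $Y\mapsto Y\cap M^n$ commutes with boolean combinations and with taking fibers (Lemma \ref{fibres}), it suffices to treat a single iterative convex hull $\bar C$ and show $\bar C\cap M^n$ is $A$-definable in $\CM$; but by Remark \ref{rem-basic cell}(2) we have exactly $\bar C\cap M^n=C$, which is $A$-definable in $\CM$ by construction.

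The subtlety hidden in the previous paragraph is the quantifiers: a $\0$-definable set in $\CM^*_A$ is obtained from atoms by boolean operations \emph{and projections} along $\bar M$-coordinates, and $(\exists y\in\bar M)\,\theta(x,y)$ intersected with $M^n$ is $(\exists y\in\bar M)(\theta(x,y)\wedge x\in M^n)$, whose $M$-part is \emph{not} obviously $(\exists y\in M)$ of the $M$-part of $\theta$. This is where saturation of $\CM$ enters. Here is how I would handle it. Work inside a saturated model and use the elementary-pair picture that the paper is building toward: the projection of an $A$-definable set of $\bar\CM_A$ is again $A$-definable in $\bar\CM_A$ (since $\bar\CM_A$ is o-minimal, hence has definable Skolem functions / at least quantifier elimination down to the atoms, so projections stay in the same language over $A$). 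Thus without loss of generality $Y$ is quantifier-free $A$-definable in $\bar\CM_A$, i.e. a boolean combination of $A$-definable iterative convex hulls and the graph of $+$, and then the argument of the first paragraph applies verbatim: $Y\cap M^n$ is a boolean combination of sets $\bar C\cap M^n=C$, each $A$-definable in $\CM$.

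The one place where genuine work remains, and which I expect to be the main obstacle, is verifying that \emph{projections} of $\0$-definable sets in $\CM^*_A=\bar\CM_A$ remain expressible over $A$ without introducing new parameters from $\bar M\setminus M$. The o-minimality of $\bar\CM_M$ (Fact \ref{Facts}(2)) gives quantifier elimination only after adding \emph{all} strong cells over $M$; over the smaller parameter set $A$ one must argue that an $A$-definable set of $\bar\CM_M$ which happens to be $\0$-definable in $\CM^*_A$ already has its full structure witnessed over $A$. This is precisely where the hypothesis that $\CM$ is $|A|^+$-saturated is used: saturation lets one realize, inside $M$, any type over $A$ needed to witness a projection, so that an existential quantifier over $\bar M$ in a $\0$-$\CM^*_A$-formula can be replaced, on $M^n$, by an existential quantifier over $M$ ranging over $A$-definable data, whence $Y\cap M^n$ is a projection of an $A$-definable subset of some $M^{n+k}$ and thus $A$-definable in $\CM$. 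I would set this up by induction on the number of $\bar M$-quantifiers in a normal form for $\0$-definable sets of $\CM^*_A$, the base case being the quantifier-free case handled above and the inductive step being the saturation argument just described.
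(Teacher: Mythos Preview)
Your proposal has a genuine gap in the treatment of quantifiers, and the approach differs substantially from the paper's.

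The quantifier-free part of your argument is fine: a boolean combination of sets $\hat X=\cl_{\bar M}(X)$ with $X\subseteq M^n$ $A$-definable, when intersected with $M^n$, yields a boolean combination of sets $\cl_M(X)$, which are $A$-definable in $\CM$. The problem is your reduction to the quantifier-free case. In the second paragraph you write that projections of $A$-definable sets in $\bar\CM_A$ remain $A$-definable because $\bar\CM_A$ ``is o-minimal, hence has definable Skolem functions / at least quantifier elimination down to the atoms.'' But at this point in the paper we only know that $\bar\CM_M$ is o-minimal (Fact~\ref{Facts}(2)); o-minimality and quantifier elimination for $\bar\CM_A\cong\CM^*_A$ are established only later (Proposition~\ref{QE}), and that proposition depends on Theorem~\ref{intersection}, which in turn depends on the very lemma you are trying to prove. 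So this step is circular.

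Your third paragraph acknowledges this and proposes a saturation fix, but the fix is not made to work. The suggestion that ``saturation lets one realize, inside $M$, any type over $A$ needed to witness a projection'' misidentifies what saturation buys you here. An existential quantifier $\exists y\in\bar M$ may need a witness in $\bar M\setminus M$; such witnesses are irrational definable cuts and are \emph{never} realized in $M$, no matter how saturated $\CM$ is. Saturation of $\CM$ enlarges $\bar M$ along with $M$; it does not collapse $\bar M$ into $M$. So there is no direct way to trade a $\bar M$-quantifier for an $M$-quantifier by saturation alone, and the inductive scheme you sketch does not get off the ground.

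The paper's proof avoids all syntactic analysis of $Y$. It first invokes Fact~\ref{Facts}(3) to get that $Y\cap M^n$ is definable in $\CM$ over \emph{some} parameters, and then uses $|A|^+$-saturation in the standard way: it suffices to show that every automorphism $\alpha$ of $\CM$ fixing $A$ pointwise fixes $Y\cap M^n$ setwise. The key observation is that any such $\alpha$ extends canonically to a bijection $\bar\alpha:\bar M\to\bar M$ (since $\alpha$ permutes definable cuts), that $\bar\alpha$ is order-preserving hence continuous, and that continuity together with $\alpha(X)=X$ for each $A$-definable $X\subseteq M^n$ forces $\bar\alpha(\hat X)=\hat X$. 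Thus $\bar\alpha$ is an automorphism of $\CM^*_A$, so it fixes the $\emptyset$-definable $Y$, and since $\bar\alpha(M)=M$ it fixes $Y\cap M^n$. This is where saturation is actually used, and it requires no understanding of the internal structure of $Y$ or any quantifier bookkeeping.
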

\proof By fact \ref{Facts},  $Y\cap M^n$ is definable in $\CM$. By
the saturation assumption it is enough to show that any
automorphism of $\CM$ which fixes $A$ point-wise leaves $Y\cap
M^n$ invariant. Let $\alpha:M\to M$ be such an automorphism. We
claim that $\alpha$ has a (unique) extension to a bijection $\bar
\alpha:\bar M\to \bar M$ which is an automorphism of $\CM^*_A$.
Because $\alpha$ is an automorphism of $\CM$ it sends definable
cuts to definable cuts so extends naturally to $\bar \alpha :\bar
M\to \bar M$. The  map $\bar \alpha$ is  an order preserving
bijection so in particular continuous on $\bar M$. To see that
$\bar \alpha$ is an automorphism of $\CM^*_A$, let $X\sub M^n$ be
$A$-definable and consider its closure $\hat X$. Since $\alpha
(X)=X$, continuity implies that $\bar \alpha (\hat X)=\hat X$,
thus $\bar \alpha$ is an automorphism of $\CM^*_A$.

Since $Y$ was $\0$-definable in $\CM^*_A$ it is left invariant under $\bar \alpha$,
and because $\bar \alpha(M)=M$, we have $$\alpha(Y\cap M^n)=\bar \alpha(Y\cap
M^n)=\bar\alpha(Y)\cap \bar \alpha(M^n)=Y\cap M^n.$$ \qed

\begin{lemma} \label{claim1.2} For $A\sub M$ arbitrary,
if $\CM\prec \CN$ then $\CM^*_A\prec \CN^*_A$.
\end{lemma}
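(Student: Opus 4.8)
The goal is to upgrade the substructure embedding $\CM^*_A \subseteq \CN^*_A$ (already established in the text) to an elementary one, for arbitrary $A \subseteq M$. My plan is to reduce first to the saturated case, where Lemma \ref{claim1.1} is available, and then run a back-and-forth / Tarski–Vaught style argument using o-minimality. More precisely, I would proceed as follows.

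\emph{Step 1: Reduce to $\CM$ saturated.} Since elementarity of an extension is preserved under taking unions of elementary chains and is a statement about types over the small model, it suffices to prove the lemma when $\CM$ is $|A|^+$-saturated: given an arbitrary $\CM \prec \CN$, pass to a highly saturated $\CM^+ \succ \CN$; if we know $\CM^+{}^*_A \prec$ everything in sight and $\CM$ itself embeds appropriately, we can transfer elementarity down along the diagram (this is the standard trick — elementarity of $\CM^*_A \prec \CN^*_A$ follows once we know $\CM^*_A \prec \CP^*_A$ and $\CN^*_A \prec \CP^*_A$ for a common saturated extension $\CP$, and the latter two are instances of the saturated case applied over the relevant parameter sets). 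So assume $\CM$ is $|A|^+$-saturated.

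\emph{Step 2: Tarski–Vaught via o-minimal cell decomposition.} With $\CM$ saturated, I want to verify the Tarski–Vaught criterion: for every $\CL^*_A$-formula $\phi(x, \bar y)$ and every tuple $\bar b$ from $\bar M$, if $\CN^*_A \models \exists x\, \phi(x, \bar b)$ then there is a witness already in $\bar M$. Since $\CM^*_A$ and $\CN^*_A$ are o-minimal (Fact \ref{Facts}(2) applies to $\CN$ as well, and Proposition \ref{same language} identifies these with $\bar\CM_A, \bar\CN_A$, which are reducts of the o-minimal $\bar\CM_M,\bar\CN_M$), the solution set $\phi(\CN^*_A, \bar b) \subseteq \bar N$ is a finite union of points and open intervals with endpoints in $\dcl(\bar b) \cup \{\pm\infty\}$. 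It therefore suffices to find, inside $\bar M$, the finitely many endpoints/isolated points of this set. Each such element is of the form $h(\bar b)$ for an $\CL^*_A$-definable (partial) function $h$, equivalently $h$ is $\0$-definable in $\bar\CM_A$; by Lemma \ref{claim1.1} (applied in the saturated $\CM$), the trace on $M^n$ of the graph of $h$ is $A$-definable in $\CM$, hence $h$ itself restricted to $\bar M$ is an $\CM^*_A$-definable function $\bar\CM_A \to \bar M$. Now I use that $\bar b \in \bar M^{|\bar y|}$ and $\CM^*_A$ is a \emph{substructure} of $\CN^*_A$: the value $h(\bar b)$ computed in $\CN^*_A$ lies in $\bar M$ and coincides with the value computed in $\CM^*_A$, because the graph of $h\!\restriction\!\bar M$ is a $\0$-definable set $G \subseteq \bar M^{n+1}$ in $\CM^*_A$, and $\hat G(\CN) \cap \bar M^{n+1} = \hat G(\CM)$ by the substructure fact proved just before the lemma. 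Thus every endpoint and isolated point of $\phi(\CN^*_A, \bar b)$ already lies in $\bar M$, so if the set is nonempty it meets $\bar M$ (an isolated point is in $\bar M$; an interval with endpoints in $\bar M$ and $M$ dense in $\bar M$ contains points of $M \subseteq \bar M$). This verifies Tarski–Vaught.

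\emph{The main obstacle} is the bookkeeping in Step 1 and the verification that the "definable function / endpoint" extracted from the o-minimal structure $\CN^*_A$ is genuinely an $\CM^*_A$-definable function whose graph is controlled by the substructure relation — i.e. making rigorous that "compute $h(\bar b)$ in $\CN^*_A$" and "compute it in $\CM^*_A$" agree. This hinges on Lemma \ref{claim1.1} to get $A$-definability in $\CM$ of the relevant traces, and then on Proposition \ref{same language} and Fact \ref{Facts}(3) to pull these back to $\CM^*_A$-definable subsets of $\bar M^{n+1}$; once the graph of $h$ is known to be $\0$-definable in $\CM^*_A$ with parameters in $\bar M$, the substructure identity $\hat G(\CN)\cap\bar M^{n+1}=\hat G(\CM)$ closes the gap. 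The o-minimality input (finitely many, definably-parametrized endpoints) is what makes the Tarski–Vaught test a finitary check, and that is why the argument goes through without needing to understand arbitrary $\CL^*_A$-formulas directly.
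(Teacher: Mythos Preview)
Both steps have genuine gaps. In Step~1, the reduction goes the wrong way: to deduce $\CM^*_A\prec\CN^*_A$ from $\CM^*_A\prec\CP^*_A$ and $\CN^*_A\prec\CP^*_A$ with $\CP\succ\CN$ saturated, you need the lemma in the case where the \emph{large} model is saturated, since $\CP$ is the big model in both applications. Assuming ``$\CM$ is $|A|^+$-saturated'' is not what such a reduction buys you; the correct reduction (which the paper carries out) is to $\CN$ saturated. In Step~2, the appeal to ``substructure'' is circular. The fact proved just before the lemma says only that the \emph{atomic} predicates $\hat X$ (closures in $\bar M$ of $A$-definable subsets of $M^n$) satisfy $\hat X(\bar N)\cap\bar M^n=\hat X(\bar M)$. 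It does \emph{not} say that an arbitrary $\CL^*_A$-definable set $G$ satisfies $G(\CN^*_A)\cap\bar M^{n+1}=G(\CM^*_A)$; that statement \emph{is} the elementarity you are trying to prove. Writing ``$\hat G(\CN)$'' for the graph of $h$ silently treats $G$ as atomic, but it is not one, and invoking Lemma~\ref{claim1.1} does not help: it only tells you $G\cap M^{n+1}$ is $A$-definable in $\CM$, and taking its closure does not recover $G$. So nothing here forces $h^{\CN^*_A}(\bar b)\in\bar M$.

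The missing idea is to \emph{absorb the parameters $\bar b$ into the language}. After reducing (correctly) to $\CN$ saturated, observe that each $b_i\in\bar M$ is the supremum of a cut definable in $\CM$ over some finite $F\subseteq M$; enlarging $A$ to $A'=A\cup F\subseteq M$ makes $\bar b$, and hence the desired witness $c\in\dcl_{\CN^*_A}(\bar b)$, lie in $\dcl_{\CN^*_{A'}}(\emptyset)$. Now Lemma~\ref{claim1.1}, applied in the saturated $\CN$ with parameter set $A'$, gives that $(-\infty,c)\cap N$ is $A'$-definable in $\CN$; since $A'\subseteq M$ and $\CM\prec\CN$, this cut is already definable in $\CM$, i.e.\ $c\in\bar M$. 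This is exactly the paper's route.
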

\proof First note that we may assume that $\CN$ is sufficiently saturated. Indeed,
we may consider $\CN'\succ \CN$ which is saturated enough. The above would then
imply that $\CM_A^*\prec (\CN_A')^*$ and $\CN_A^*\prec (\CN_A')^*$, from which it
follows that $\CM_A^*\prec \CN_A^*$.

By The Tarski-Vaught Criterion, it is enough to prove, for every nonempty $Y\sub
\bar N$ which is definable in $\CN^*_A$ over $\bar M$, that $Y\cap \bar M\neq
\emptyset$.

Since $\CN^*_A$ is an o-minimal expansion of a group,  $Y$ contains some element
$b\in \dcl_{\CN^*_A}(\bar M)$. So, there exists a finite tuple $a=(a_1,\ldots, a_r)$
from $\bar M$, such that $b\in \dcl_{\CN^*_A}(a)$. Each $a_i$ realizes a cut in $M$,
definable in $\CM$ over some finitely many parameters. Thus there is a finite $F\sub
M$ such that each $a_i$ realizes  a cut definable over $F$. If we now let $A'=A\cup
F\sub M\sub N$ then clearly every element in $A'$ is $\0$-definable in $\CN^*_{A'}$,
hence $b$ is in $dcl_{\CN^*_{A'}}(\0)$ so the set $(-\infty,b)$ is $\0$-definable in
$\CN^*_{A'}$. Since $\CN$ is sufficiently saturated it follows from Lemma
\ref{claim1.1} that $(-\infty,b)\cap N$ is $A'$-definable in $\CN$.

Since $\CM\prec \CN$ and $A'\sub M$ it follows, as we already noted above,  that
$b\in \bar M$, so $X\cap \bar M\neq \emptyset$. Thus $\CM^*_A\prec \CN^*_A$. \qed
\\

\noindent {\bf Note}: It only makes sense to compare $\CM^*_A$ and $\CN^*_A$ for
$A\sub M$, since otherwise the two structures do not have a common language.\\

Finally, we can now prove:

\begin{theorem}\label{intersection}  For  $A\sub M$ (with no saturation assumption),
 assume that $X\sub \bar M^n$ is $\0$-definable in
the structure $\CM^*_A$. Then $X\cap M^n$ is $A$-definable in $\CM$. In particular,
if $f:\bar M^n \to \bar M$ is $\0$-definable in $\CM^*_A$ then $f\upharpoonright
M^n:M^n\to \bar M$ is $A$-definable in $\CM$.
\end{theorem}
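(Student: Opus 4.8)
The plan is to bootstrap from the saturated case, Lemma \ref{claim1.1}, via the elementarity provided by Lemma \ref{claim1.2}. First I would pick an elementary extension $\CN\succ\CM$ that is $|A|^+$-saturated; this is possible since $A\subseteq M$. By Lemma \ref{claim1.2} we then have $\CM^*_A\prec\CN^*_A$ inside the common language $\CL^*_A$, and recall from the discussion preceding Lemma \ref{claim1.1} that the inclusion realising $\CM^*_A$ as a substructure of $\CN^*_A$ is the natural one $\bar M^n\subseteq\bar N^n$, under which $\hat X(N)\cap\bar M^n=\hat X(M)$ for every atomic predicate.

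Write $X=\phi(\CM^*_A)$ for a parameter-free $\CL^*_A$-formula $\phi(\bar x)$ and set $X':=\phi(\CN^*_A)\subseteq\bar N^n$. Since $\CM^*_A\prec\CN^*_A$, we get $X'\cap\bar M^n=X$. Now $X'$ is $\0$-definable in the $|A|^+$-saturated structure $\CN^*_A$ (and $A\subseteq N$), so Lemma \ref{claim1.1} --- applied with $\CN$ in the role of $\CM$ --- yields that $X'\cap N^n$ is $A$-definable in $\CN$; fix an $\CL$-formula $\psi(\bar x,\bar z)$ and a finite tuple $\bar a$ from $A$ with $X'\cap N^n=\psi(\CN,\bar a)$. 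Chasing intersections, using $M^n\subseteq\bar M^n\subseteq\bar N^n$ together with $M^n\subseteq N^n$,
\[
X\cap M^n=(X'\cap\bar M^n)\cap M^n=X'\cap M^n=(X'\cap N^n)\cap M^n=\psi(\CN,\bar a)\cap M^n=\psi(\CM,\bar a),
\]
the last equality because $\CM\prec\CN$ and $\bar a\in M^{<\omega}$. Hence $X\cap M^n$ is $A$-definable in $\CM$.

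For the ``in particular'' clause I would argue as follows: if $f:\bar M^n\to\bar M$ is $\0$-definable in $\CM^*_A$, then, since $<$ is atomic in $\CM^*_A$, the set $G:=\{(\bar x,y)\in\bar M^{n+1}:y<f(\bar x)\}$ is $\0$-definable in $\CM^*_A$; applying the main statement to $G$ shows that $G\cap M^{n+1}=\{(\bar x,y)\in M^{n+1}:y<f(\bar x)\}$ is $A$-definable in $\CM$, which by the definition of a definable partial function $M^n\to\bar M$ is exactly the assertion that $f\upharpoonright M^n$ is $A$-definable in $\CM$. I do not expect a genuine obstacle here: the substantial work has already been packaged into Lemmas \ref{claim1.1} and \ref{claim1.2}, and the only point deserving care is that $X'\cap\bar M^n$ really equals $X$, which is precisely the content of $\CM^*_A\prec\CN^*_A$ applied to the parameter-free formula $\phi$.
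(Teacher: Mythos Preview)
Your proof is correct and follows essentially the same route as the paper's: pass to an $|A|^+$-saturated $\CN\succ\CM$, apply Lemma~\ref{claim1.1} there, and use Lemma~\ref{claim1.2} together with $\CM\prec\CN$ to pull the $A$-definability back down to $\CM$. Your intersection-chasing and your handling of the ``in particular'' clause are slightly more explicit than the paper's, but the argument is the same.
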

\proof We consider an elementary extension $\CN$ of $\CM$ that is $|A|^+$-saturated.
By Lemma \ref{claim1.2}, we have $\CM^*_A\prec \CN^*_A$ and by Lemma \ref{claim1.1},
the set $Y=X(\bar N)\cap N^n$ is definable in $\CN$ over $A$. Since $\CM\prec \CN$
we can conclude that $Y\cap M^n=X(\bar N)\cap M^n$ is also definable over $A$ in
$\CM$. It is left to see that this last set equals $X\cap M^n$. Because
$\CM^*_A\prec \CN^*_A$ we have $X(\bar N)\cap \bar M^n=X$, and therefore
$$Y\cap M^n=X(\bar N)\cap M^n=X\cap M^n.$$

For the second clause, just note that the set $\{x\in M^n:x<f(x)\}$ is the
intersection of a $\0$-definable subset of $\bar M^n$ with $M^n$. \qed

We now return to Proposition \ref{observation} and Theorem \ref{elitzur} and prove
finer results:
\begin{proposition}\label{QE} For any $A\sub M$,
\begin{enumerate} \item
 The structure $\CM^*_A$ eliminates
quantifiers. In fact every $\0$-definable set is a boolean
combination of sets of the form $\cl_{\bar M}(X)$ for $X\sub M^n$
open and definable in $\CM$ over $A$.

\item If $X\sub \bar M^n$ is $\0$-definable in $\CM^*_A$ then it
is $A$-definable in $\CM^*_\0$.
\end{enumerate}
\end{proposition}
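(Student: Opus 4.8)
The plan is to prove the two clauses in sequence, using the earlier results as black boxes. For clause (1), I would first invoke Proposition \ref{observation}(1) applied to the o-minimal structure $\CM^*_A$: every $\0$-definable set is a boolean combination of closures of $\0$-definable open sets of $\CM^*_A$. So it suffices to handle a $\0$-definable open $U\sub \bar M^n$ in $\CM^*_A$ and show its closure is a boolean combination of sets $\cl_{\bar M}(X)$ with $X\sub M^n$ open and $A$-definable in $\CM$. Since $M^n$ is dense in $\bar M^n$, we have $\cl_{\bar M}(U)=\cl_{\bar M}(U\cap M^n)$. By Theorem \ref{intersection}, $U\cap M^n$ is $A$-definable in $\CM$. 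Now apply Proposition \ref{observation}(1) to the weakly o-minimal $\CM$ (via strong cell decomposition of $U\cap M^n$ over $A$): $U\cap M^n$ is a boolean combination of closures-in-$M$ of $A$-definable open subsets of $M^k$'s. Taking closures in $\bar M$ and using that $\cl_{\bar M}(\cl_M(S)) = \cl_{\bar M}(S)$ and that closure commutes with finite unions, I would conclude $\cl_{\bar M}(U\cap M^n)$ is a boolean combination of sets of the required form. (A small point to check: closure does not commute with intersection or complement in general, so I must genuinely rely on the boolean combination in Proposition \ref{observation}(1) being a union of closures of opens, handled correctly — but this is exactly what that proposition delivers, both for $\CM$ and for $\CM^*_A$.)

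For clause (2), the statement is that $\0$-definability in $\CM^*_A$ reduces to $A$-definability in $\CM^*_\0$. By clause (1), it is enough to show that each atomic building block $\cl_{\bar M}(X)$, for $X\sub M^n$ open and $A$-definable in $\CM$, is $A$-definable in $\CM^*_\0$. Write $X = X'_a$ where $X'\sub M^{n+k}$ is $\0$-definable in $\CM$ and $a\in A^k$. By Fact \ref{Facts}(1) decompose $X'$ into $\0$-definable strong cells $C_1,\dots,C_r$; then by Lemma \ref{fibres}, $(C_i)_a$ is a strong cell and $(\bar C_i)_a = \overline{(C_i)_a}$, while by Lemma \ref{nice closure transition}, $\cl_{\bar M}((C_i)_a) = \cl_{\bar M}(\overline{(C_i)_a}) = \cl_{\bar M}((\bar C_i)_a)$. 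Hence $\cl_{\bar M}(X) = \bigcup_i \cl_{\bar M}((\bar C_i)_a) = \left(\bigcup_i \cl_{\bar M}(\bar C_i)\right)_a$, where I am using that the fiber of a closure contains (and, by density of $M$, equals) the closure of the fiber — this needs a line of care but follows from $M$ being dense in $\bar M$ together with Lemma \ref{fibres}. Since each $\bar C_i$ is $\0$-definable in $\bar \CM_\0 = \CM^*_\0$ (Proposition \ref{same language}) and the closure operation is $\0$-definable in the o-minimal $\CM^*_\0$, the set $\bigcup_i \cl_{\bar M}(\bar C_i)$ is $\0$-definable in $\CM^*_\0$, and therefore its $a$-fiber is $A$-definable in $\CM^*_\0$.

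The main obstacle I anticipate is the bookkeeping around how topological closure interacts with (i) boolean combinations and (ii) taking fibers. For (i), closure is well-behaved only with unions, so the argument must route everything through the "union of closures of opens" normal form of Proposition \ref{observation}(1) rather than an arbitrary boolean combination; I would state this carefully. For (ii), the identity $\cl_{\bar M}(S)_a = \cl_{\bar M}(S_a)$ is false for a general set $S\sub \bar M^{n+k}$ and a general point $a$, but it does hold in our situation because $S$ is (a finite union of) iterative convex hulls of strong cells and $a$ ranges over $M^k$, which is dense; the clean way to see it is to reduce to the cell case and cite Lemma \ref{fibres} plus Lemma \ref{nice closure transition} as above. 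Once these two points are pinned down, the rest is routine.
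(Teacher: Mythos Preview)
Your overall strategy matches the paper's, but in both clauses you insert an extra step that creates a spurious difficulty you then (rightly) flag as an obstacle; removing that step yields exactly the paper's argument.

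For clause (1): once you have $\cl_{\bar M}(U)=\cl_{\bar M}(U\cap M^n)$ with $U\cap M^n$ $A$-definable in $\CM$ (via Theorem~\ref{intersection}), you are already done. Since $U\sub\bar M^n$ is open and $M^n$ carries the subspace topology, $U\cap M^n$ is open in $M^n$; hence $\cl_{\bar M}(U)$ is directly of the form $\cl_{\bar M}(X)$ with $X\sub M^n$ open and $A$-definable. There is no need to re-apply Proposition~\ref{observation}(1) to $\CM$. Your parenthetical hope that this proposition delivers a \emph{union} of closures of opens is in fact wrong---its proof produces a genuine boolean combination (using $X=\cl(X)\setminus(\cl(X)\cap\cl(M^n\setminus\cl(X)))$ and similar differences)---so the obstacle you anticipate would be real if the extra step were needed; fortunately it is not.

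For clause (2): your decomposition via Lemma~\ref{fibres} and Lemma~\ref{nice closure transition} is exactly the paper's, and $\cl_{\bar M}(X)=\bigcup_i \cl_{\bar M}\bigl((\bar C_i)_a\bigr)$ is correct. But the further equality $\cl_{\bar M}\bigl((\bar C_i)_a\bigr)=\bigl(\cl_{\bar M}(\bar C_i)\bigr)_a$ is neither needed nor justified by ``density of $M$'' (closure does not commute with fibering in general, and density plays no role here). The paper instead observes that $(\bar C_i)_a$ is already $A$-definable in $\CM^*_\0$---it is the $a$-fiber of the $\0$-definable set $\bar C_i$---and that closure is a $\0$-definable operation in any o-minimal structure; hence each $\cl_{\bar M}\bigl((\bar C_i)_a\bigr)$ is $A$-definable in $\CM^*_\0$, and so is their union.
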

\proof (1) We may repeat the short argument in the proof of \ref{observation} with
the additional data given by Theorem \ref{intersection}, that whenever $X\sub \bar
M^n$ is $\0$-definable in $\CM^*_A$, the set $X\cap M^n$ is $A$-definable in $\CM$.
For (2), assume that $Z$ is $\0$-definable in $M^*_A$. By (1), $Z$ is a boolean
combination of atomic sets (with no extra parameters), so it is sufficient to prove
that each atomic such set $Z$ is $A$-definable in $\CM^*_\0$.
 By the first paragraph of the proof of
  Proposition \ref{same language} $Z=\bigcup_{i=1}^k \cl_{\bar M}(\bar C_i)$ for some $A$-definable
  strong cells $C_i\subseteq M^n$. So $C_i=(D_i)_a$ for some $\0$-definable set $D_i$ nd
  $a\subseteq A$. By strong cell decomposition, each  $D_i$ is itself a finite union of $\0$-definable
  strong cells, so we may write each $C_i$ as a union of the form
  $\bigcup_j(D_{i,j})_a$, where each $D_{i,j}$ is a $\0$-definable strong cell.

  By Lemma \ref{fibres} we know that $\overline {(D_{i,j})_a}=(\overline {D_{i,j}})_a$ for every $j$. The
   right-hand side of this equation is $A$-definable in $\CM^*_\0$, and hence so is its $\bar M$-closure.
    Therefore the closure of each
   $C_i$ is a finite union of sets that are $A$-definable in $\CM_\0^*$. The conclusion follows.
\qed

Since any two elementarily equivalent structures have a common elementary extension,
we can also conclude from Lemma \ref{claim1.2}:

\begin{corollary}\label{equiv} If $\CM$ is non-valuational and $\CN\equiv \CM$ then
$\bar \CM_\0\equiv \bar \CN_\0$ (both are $\bar \CL_\0$-structures), and
$\CM^*_\0\equiv \CN^*_\0$ (as $\CL^*_\0$-structures)
\end{corollary}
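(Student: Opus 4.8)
The plan is to derive Corollary \ref{equiv} directly from Lemma \ref{claim1.2} using the standard amalgamation trick for elementary equivalence. First I would recall that $\CM\equiv\CN$ means they have a common elementary extension: by the elementary amalgamation theorem (or: pass to a sufficiently saturated monster model of the common theory and embed both elementarily), there is a structure $\CP$ with $\CM\prec\CP$ and $\CN\prec\CP$. The subtlety worth spelling out is that one cannot literally realize $\CM$ and $\CN$ as submodels of a single $\CP$ simultaneously unless one fixes an embedding; so I would instead say: let $\CP\succ\CM$ be any elementary extension, and let $\CN'\equiv\CN$ with $\CN'\prec\CP$, obtained by the downward L\"owenheim--Skolem / Tarski--Vaught construction inside a monster model of $\theo(\CM)=\theo(\CN)$. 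Replacing $\CN$ by $\CN'$ is harmless since we only want a statement about theories.

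Next I would apply Lemma \ref{claim1.2} with $A=\0$. Since $\CM\prec\CP$ we get $\CM^*_\0\prec\CP^*_\0$, and since $\CN'\prec\CP$ we get $(\CN')^*_\0\prec\CP^*_\0$. In particular $\CM^*_\0\equiv\CP^*_\0\equiv(\CN')^*_\0$ as $\CL^*_\0$-structures. Finally, because $\CN\equiv\CN'$ (as $\CM$-structures in the weakly o-minimal language) and the construction of $(\cdot)^*_\0$ depends only on $\0$-definable sets --- which are the same in elementarily equivalent structures by Theorem \ref{signature} (the ``moreover'' clause), or directly because the language $\CL^*_\0$ is indexed by $\0$-definable sets of $\CM$ and $\theo(\CM)$ alone determines which such sets appear --- we conclude $\CN^*_\0\equiv(\CN')^*_\0$, hence $\CM^*_\0\equiv\CN^*_\0$. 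The statement for $\bar\CM_\0$ then follows from Proposition \ref{same language}, which identifies the $\0$-definable sets of $\bar\CM_\0$ with those of $\CM^*_\0$; alternatively one runs the whole argument with $\bar\CM_\0$ in place of $\CM^*_\0$, noting Lemma \ref{claim1.2} is stated for both via that Proposition.

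The only point requiring care --- and the place I would be most careful in writing --- is the very first step: making precise the sense in which $\CM$ and $\CN$ sit inside a common $\CP$ so that Lemma \ref{claim1.2} is genuinely applicable to \emph{both}. The lemma requires an honest elementary extension $\CM\prec\CN$ in the weakly o-minimal language, and the amalgam $\CP$ must restrict to (a copy of) $\CN$ elementarily as well; since $\theo(\CM)$ and $\theo(\CN)$ agree this is routine, but it is worth stating explicitly rather than waving at ``common elementary extension,'' because the conclusion $\CM^*_\0\equiv\CN^*_\0$ silently uses that the signature $\CL^*_\0$ is the \emph{same} for $\CM$ and $\CN$ --- which is exactly the content of Theorem \ref{signature} and is the reason this corollary is not vacuous. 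I do not expect any real obstacle here; it is a short formal deduction from the two preceding lemmas.
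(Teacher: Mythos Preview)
Your overall strategy---pass to a common elementary extension and apply Lemma \ref{claim1.2} twice---is exactly the paper's one-line argument. The problem is the detour through $\CN'$. You take $\CN'\prec\CP$ with merely $\CN'\equiv\CN$ and then need $\CN^*_\0\equiv(\CN')^*_\0$; but deducing this from elementary equivalence alone is precisely the corollary you are proving, now applied to the pair $(\CN,\CN')$ instead of $(\CM,\CN)$. Citing Theorem \ref{signature} at this step is circular: its ``moreover'' clause \emph{is} Corollary \ref{equiv}. The alternative justification you offer (``the language $\CL^*_\0$ is indexed by $\0$-definable sets and $\theo(\CM)$ determines which appear'') only establishes that $\CN^*_\0$ and $(\CN')^*_\0$ live in the same signature; it does not by itself give elementary equivalence of the completions.

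The worry that prompted the detour is not a genuine one. Elementary amalgamation gives honest elementary embeddings $f:\CM\to\CP$ and $g:\CN\to\CP$; identify $\CM$ with $f(\CM)$ and $\CN$ with $g(\CN)$. This identification is harmless because an isomorphism of weakly o-minimal structures extends tautologically to an $\CL^*_\0$-isomorphism of their completions (the universe $\bar M$ and the interpretation of each predicate $\hat X$ are built from $\CM$ alone). With both $\CM$ and $\CN$ now literally elementary substructures of $\CP$, Lemma \ref{claim1.2} with $A=\0$ yields $\CM^*_\0\prec\CP^*_\0$ and $\CN^*_\0\prec\CP^*_\0$ directly, hence $\CM^*_\0\equiv\CN^*_\0$. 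The $\bar\CM_\0$ version then follows via Proposition \ref{same language}, as you note. No intermediate $\CN'$ and no forward reference are needed.
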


Finally, we shall be using the following technical lemma:
\begin{lemma}\label{early-dcl} For every $A\sub M$, $\dcl_{\CM^*_\0}(A)\cap
M=\dcl_\CM(A).$
\end{lemma}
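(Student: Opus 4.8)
The plan is to prove the two inclusions separately, the harder one being $\dcl_{\CM^*_\0}(A)\cap M\subseteq \dcl_\CM(A)$, which will be a direct application of Theorem \ref{intersection}.

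For the inclusion $\dcl_\CM(A)\subseteq \dcl_{\CM^*_\0}(A)\cap M$, let $b\in\dcl_\CM(A)$. Then $\{b\}\subseteq M$ is an $A$-definable set in $\CM$, and since a single point of $\bar M$ is closed we have $\cl_{\bar M}(\{b\})=\{b\}$. Hence $\{b\}$ is one of the atomic relations of $\CM^*_A$, so it is $\0$-definable in $\CM^*_A$; by Proposition \ref{QE}(2) it is therefore $A$-definable in $\CM^*_\0$. Thus $b\in\dcl_{\CM^*_\0}(A)$, and of course $b\in M$.

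For the reverse inclusion, let $b\in\dcl_{\CM^*_\0}(A)\cap M$. Choose an $\CL^*_\0$-formula $\psi(x,\bar y)$ and a finite tuple $\bar a\in A^k$ such that $b$ is the unique element of $\bar M$ with $\CM^*_\0\models\psi(b,\bar a)$. Let $\Phi=\{(x,\bar y)\in\bar M^{k+1}:\psi(x,\bar y)\}$; this is $\0$-definable in $\CM^*_\0$, so by Theorem \ref{intersection} (applied with empty parameter set) the set $\Phi_0:=\Phi\cap M^{k+1}$ is $\0$-definable in $\CM$. Now $(b,\bar a)\in\Phi_0$ because $b,\bar a$ lie in $M$ and $\psi(b,\bar a)$ holds; moreover $b$ is the \emph{unique} $x\in M$ with $(x,\bar a)\in\Phi_0$, since any such $x$ also lies in $\Phi$, whence $\psi(x,\bar a)$ holds in $\CM^*_\0$ and $x=b$ by the choice of $\psi$. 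Therefore $\{b\}$ is $\bar a$-definable in $\CM$ via the $\CM$-formula $(x,\bar a)\in\Phi_0$, so $b\in\dcl_\CM(\bar a)\subseteq\dcl_\CM(A)$.

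I do not anticipate a serious obstacle here: the only point requiring care is the transfer of the uniqueness condition from $\bar M$ down to $M$, which works precisely because $b$ itself lies in $M$ and $\Phi_0$ is literally the restriction of $\Phi$; the substantive content is entirely packaged in Theorem \ref{intersection} and Proposition \ref{QE}(2).
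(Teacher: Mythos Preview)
Your proof is correct and follows essentially the same strategy as the paper: both directions rest on Theorem~\ref{intersection} and Proposition~\ref{QE}(2), with only cosmetic differences (you use the singleton $\{b\}$ where the paper uses the interval $(-\infty,b)$, and you apply Theorem~\ref{intersection} over $\emptyset$ with the parameters $\bar a$ carried as free variables rather than first passing to $\CM^*_A$). These variations are immaterial, and if anything your version is slightly more direct.
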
 \proof Assume that $a\in dcl_{\CM_\0^*}(A)\cap M$.
Then it follows that $a\in dcl_{\CM^*_A}(\0)$ (since each element
of $A$ is $\0$-definable in $\CM^*_A$). Hence, the interval
$(-\infty,a)\sub \bar M$ is $\0$-definable in $\CM^*_A$, so by
Theorem \ref{intersection}, the intersection of $(-\infty,a)$ with
$M$ is $A$-definable in $\CM$. Because $a\in M$, we have $a\in
dcl_\CM(A)$.

For the converse, assume that $a\in dcl_\CM(A)$ (so in particular in $M$). Thus, the
interval $(-\infty,a)$ is definable in $\CM$ over $A$ and its iterative convex hull,
the interval $\overline{(\infty,a)}\sub \bar M$, is $\0$-definable in $\CM_A$. By
Proposition \ref{observation}(2), this interval is $A$-definable in $\CM^*_\0$ so $a\in
\dcl_{\CM^*}(A)\cap M$.\qed

\section{Tight weakly o-minimal structures}

As was pointed out before, the set $\bar M$ can be viewed as a union of sorts in
$\CM$, where each sort corresponds to a $\0$-definable family of cuts in $\CM$. In
general, there might be infinitely many such sorts, but in some cases there are only
finitely many such sorts.

\subsection{Definition and basic properties}
\begin{definition}
    A non-valuational structure $\CM$  is \emph{tight} if there are finitely
     many $\0$-definable
    families of cuts in $\CM$ such that
     every definable cut belongs to one of them.
\end{definition}
Clearly, if $\CM$ is an o-minimal structure then it is (trivially) non-valuational
and tight, since the family of definable cuts is just all intervals of the form
$(-\infty,a)$, as $a$ varies in $M$.

It immediately follows that if $\CM\equiv \CN$ then $\CM$ is tight if and only if
$\CN$ is tight. Thus, we may use the term ``tight'' for $T$ as well.

\begin{proposition}\label{tight1} The structure $\CM$ is tight if and only if there are finitely many
$\0$-definable functions $f_i:M^{n_i}\to \bar M$, $i=1,\ldots, k$, such that $\bar
M\sub \bigcup_{i=1}^k Im(f_i)$.

In particular, $\CM$ is tight then the structure $\CM^*$ is interpretable in $\CM$
without parameters.
\end{proposition}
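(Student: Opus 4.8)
The plan is to prove the two characterizations in order, beginning with the equivalence between the "finitely many families of cuts" definition of tightness and the existence of finitely many $\0$-definable functions $f_i:M^{n_i}\to \bar M$ whose images cover $\bar M$. For the forward direction, recall that a $\0$-definable family of cuts in $\CM$ is by definition (see the discussion preceding Lemma \ref{fibres} and the notion of a definable partial function $f:M^n\to\bar M$) exactly the data of a $\0$-definable set $F\subseteq M^{n+1}$ such that for each $x\in M^n$ the fibre $\{y\in M : (x,y)\in F\}$ is a cut; this is the graph-data of a $\0$-definable partial function $f:M^n\to\bar M$, and the cuts in the family are precisely the values $f(x)$ for $x$ ranging over the domain. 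So if every definable cut belongs to one of finitely many $\0$-definable families, then taking $f_1,\dots,f_k$ to be the corresponding $\0$-definable (partial) functions, every element of $\bar M$ — being the supremum of some definable cut, hence a cut lying in one of the families — is in $\bigcup_i \im(f_i)$. (One should be slightly careful that a definable cut, a priori defined over parameters $a\in M$, belongs to a $\0$-definable family; this is exactly the content of the hypothesis, which quantifies over all definable cuts.) Conversely, given $\0$-definable $f_i:M^{n_i}\to\bar M$ with $\bar M\subseteq\bigcup_i\im(f_i)$, each $f_i$ is itself a $\0$-definable family of cuts (its graph-set $F_i=\{(x,y):y<f_i(x)\}$ is $\0$-definable by the definition of a definable partial function), and every definable cut $c\in\bar M$ lies in the image of some $f_i$, hence in one of these finitely many families.

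For the "in particular" clause, I would use the function-covering characterization just established to build an interpretation of $\CM^*$ in $\CM$. Fix $f_1,\dots,f_k$ as above with $\bar M = \bigcup_{i=1}^k \im(f_i)$. The interpreting set is $D := \bigsqcup_{i=1}^k \dom(f_i) \subseteq \bigsqcup_i M^{n_i}$, a $\0$-definable subset of a finite disjoint union of Cartesian powers of $M$, and the surjection $\rho:D\to\bar M$ sending $x\in\dom(f_i)$ (in the $i$-th copy) to $f_i(x)$ is a $\0$-definable map in $\CM$. To realize this as an interpretation in the model-theoretic sense I need: (a) the equivalence relation $E$ on $D$ given by $\rho(x)=\rho(x')$ is $\0$-definable in $\CM$, and (b) the preimage under $\rho$ (coordinatewise, on $D^n$) of each atomic relation of $\CM^*_\0$ is $\0$-definable in $\CM$. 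For (a): $\rho(x)=\rho(x')$ iff the cuts coincide, i.e. iff $\forall y\in M\,(y<f_i(x)\leftrightarrow y<f_j(x'))$ when $x$ is in the $i$-th and $x'$ in the $j$-th copy; since each $F_i$ is $\0$-definable in $\CM$, this is a $\0$-definable condition. For (b), the atomic relations of $\CM^*_\0$ are, by definition, the sets $\cl_{\bar M}(X)$ for $X\subseteq M^m$ a $\0$-definable set of $\CM$ (and by Proposition \ref{QE}(1) we may even restrict to $X$ open), together with $<$ and $+$ on $\bar M$. For the order: $\rho(x)<\rho(x')$ is a $\0$-definable condition on $D^2$ by the same fibrewise argument as in (a). For $+$: since $\bar M$ carries the unique extension of the divisible ordered group $M$ (as noted after Remark \ref{rem-basic cell}), $\rho(x)+\rho(x')=\rho(x'')$ is expressed by saying the sum of the two cuts equals the third cut, again a $\0$-definable condition pulled back through the $F_i$'s. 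The main point, and the step I expect to require the most care, is pulling back a closure relation $\cl_{\bar M}(X)$: here I would use the "first-order" description of membership in a closure used in Section 3 — namely that $b\in\cl_{\bar M}(X(M))$ iff for every $\epsilon>0$ in $M$ the set $\{y\in M^m : |b-y|<\epsilon\}$ meets $X$ — and translate this through $\rho$. Concretely, $\bigl(\rho(x^{(1)}),\dots,\rho(x^{(m)})\bigr)\in\cl_{\bar M}(X)$ iff $\forall \epsilon\in M_{>0}\,\exists y\in X\,\bigwedge_{j=1}^m |y_j - \rho(x^{(j)})| < \epsilon$, where "$|y_j - \rho(x^{(j)})|<\epsilon$" unfolds, via the cut $f_i$ corresponding to the copy containing $x^{(j)}$, into the $\0$-definable condition $y_j - \epsilon < f_i(x^{(j)})$ and $f_i(x^{(j)}) < y_j + \epsilon$ (interpreted through $F_i$). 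Since $X$ and each $F_i$ are $\0$-definable in $\CM$, this whole formula is $\0$-definable in $\CM$, giving the required pullback.

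Assembling these: $D$, $E$ and the pulled-back relations furnish a parameter-free interpretation of the $\CL^*_\0$-structure $\CM^*_\0$ in $\CM$; and since by Theorem \ref{elitzur} (together with Proposition \ref{same language}) the structures $\CM^*_\0$ and $\CM^*$ (equivalently $\bar\CM_M$) have the same definable sets, $\CM^*$ is interpretable in $\CM$ without parameters, as claimed. One remark on a subtlety to flag in the writeup: the disjoint union $\bigsqcup_i M^{n_i}$ must itself be coded inside a single Cartesian power of $M$ in the usual way (e.g. by reserving a coordinate as a tag and padding), which is harmless since $\CM$ expands an ordered group and in particular has at least two definable constants $0\neq$ (some positive element) to serve as tags — or one can simply work with a many-sorted interpretation, which is equally acceptable. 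This is bookkeeping rather than a genuine obstacle; the genuine content is the closure-pullback computation above.
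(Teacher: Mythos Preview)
Your proposal is correct and follows essentially the same approach as the paper, which merely sketches the argument: the first clause is declared ``easy to verify,'' and for the second the paper notes that $\bar M$ is a definable quotient of some $M^n$, that the embedding of $M$ and the ordering on $\bar M$ are definable in $\CM$, and that consequently so are the closures $\cl_{\bar M}(X)$. Your explicit $\epsilon$-ball formula for the closure pullback and the invocation of Theorem~\ref{elitzur} to pass from $\CM^*_\0$ to $\CM^*$ are careful elaborations of steps the paper leaves to the reader.
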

\proof  The first clause is easy to verify. For the second clause, note first that
the universe of $\bar M$ is a quotient of some $M^n$ by a definable set, and
furthermore the embedding of $M$ in this quotient (i.e. the family of cuts
$\{C_x:x\in M\}$, where $C_x=\{y<x\}$) is definable in $\CM$. It is easy to see that
the ordering on $\bar M$ is definable in $\CM$ and hence $\cl_{\bar M}(X)$ is
definable in $\CM$ for every $\CM$-definable $X\sub M^n$.\qed

\begin{remark}
    The above proof shows, in fact, that the pair $(\CM^*, \CM)$ is bi-interpretable with $\CM$, i.e., not only is $\CM^*$ interpretable in $\CM$, but so is the natural embedding of $M$ in $\bar M$.
\end{remark}

\subsection{An example of a tight structure}
 We shall now see that there are
examples of tight structures which are not o-minimal.

    Let $\Qq_{vs}=\la \mathbb Q,<,+,1,\{\lambda_{q}\}_{q\in \mathbb Q}\ra$ denote the group of rational numbers,
    viewed as an ordered vector space over itself, with a function symbol for every rational scalar.
    Let $\Q2$ be the expansion of $\Qq_{vs}$ by the relation
    $$P\!{\mbox{\tiny $\sqrt{2}$}}=\{(x,y)\in \mathbb Q^2: y<\sqrt{2} x\}.$$ We
    denote the langauge by $\CL_{\sqrt 2}$. (In \cite[Section 3]{ElHaKe} a similar expansion of $\Qq_{vs}$ by the predicate
$P_\pi$ was investigated.)

\newcommand{\Psq}{P\!{\mbox{\tiny $\sqrt{2}$}}}

The idea is to eventually identify $\Psq$ with a map $x\mapsto \sqrt 2 x$ from the
structure $\Q2$ into its canonical completion.
    Our goal is to show that $\mathrm{Th}(\Q2)$ is axiomatised by the following theory $T$:
\begin{enumerate}
    \item The ordered $\Qq$-vector space axioms.
    \item An axiom expressing the fact that $\Psq$ is ``linear'':
    $$(\forall x_1,y_1,x_2,y_2) \,( ((x_1,y_1)\in \Psq \wedge (x_2,y_2)\in \Psq )\rightarrow
    (x_1+x_2,y_1+y_2)\in \Psq).$$
 \item (Ensuring that we define the positive $\sqrt 2$): $(\exists x, y) ((x,y)\in
    \Psq\land x>0 \land y>0)$
\item For all $r\in \Qq$, such that  $r<\sqrt 2$, we have $\forall x\,(x>0
\rightarrow (x,rx) \in \Psq)$,
     and for all $r\in \mathbb Q$ such that $r>\sqrt 2$, we have $\forall
     x\, (x>0 \rightarrow (x,rx)\notin \Psq)$.
    \item  For all $x\neq 0$,  the set
    $$\{y:(x,y)\in \Psq \}$$ is closed downwards, and has no supremum.
    Furthermore, $$\mbox{Inf }\{y_2-y_1: (x,y_1)\in \Psq \,\& \, (x,y_2)\notin \Psq\}\,
    =\, 0.$$
    \item An axiom expressing the fact that the composition of $x\mapsto \sqrt 2 x$ with itself
    yields the map $x\mapsto 2x$:
   \[\forall (x,y>0)\left([(\exists z>0)\Psq(x,z)\land \Psq(z,y)]\iff y<2x\right).\]
    \item The quantifier-free theory of $\Q2$.
%
%
\end{enumerate}
%
Clearly, $\Q2$ is a model of $T$.

 For simplicity we write $F=\Qq({\sqrt 2})$.  Before we prove quantifier
elimination we note that if $\CM$ is a model of $T$ then we may consider the
associated $F$-vector space $V=F\otimes_{\mathbb Q}M$. If we identify $M$ with the
$\mathbb Q$-subspace $1\otimes M$, then  each element of $V$ can be written uniquely
as $x+{\sqrt 2}y$ for $x,y\in M$. We can now endow $V$ with an ordering by declaring
$x+{\sqrt 2}y>0$ when $(y,-x)\in \Psq$. Indeed, the above axioms imply that this is
a linear ordering of the vector space $V$, compatible with the ordering of $F$.

The definition of the ordering and Axiom (3) allows us to conclude:
\begin{claim} \begin{enumerate}
\item For $x,y\in M$, we have $(x,y)\in \Psq \Leftrightarrow$ if and only if  $y<
{\sqrt 2} x$ in $V$. \item $M$ is dense in $V$.\end{enumerate}
\end{claim}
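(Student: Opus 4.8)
We must prove two statements about a model $\CM \models T$ and the associated ordered $F$-vector space $V = F \otimes_\Qq M$, where $M$ is identified with $1 \otimes M$ and $V$ carries the ordering determined by declaring $x + \sqrt 2 y > 0$ iff $(y,-x) \in \Psq$: first, that for $x, y \in M$ we have $(x,y) \in \Psq$ iff $y < \sqrt 2 x$ computed in $V$; second, that $M$ is dense in $V$.

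**Proof of part (1).** The plan is to unwind the definition of the ordering on $V$ together with the linearity axiom (Axiom 2). For $x, y \in M$, the element $\sqrt 2 x - y \in V$ is positive iff, by the defining convention applied to the coefficients (namely $\sqrt 2 x - y = (-y) + \sqrt 2 \, x$, so the ``$M$-part'' is $-y$ and the ``$\sqrt 2$-part'' is $x$), we have $(x, -(-y)) = (x,y) \in \Psq$. So the equivalence is essentially immediate once one checks that the convention is applied consistently. The only thing requiring a line of care is that the ordering on $V$ is well-defined and total — but the excerpt already asserts that ``the above axioms imply that this is a linear ordering,'' so I would simply invoke that and read off the equivalence for the special case where both coordinates lie in $M$. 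I would also note that Axiom 3 (positivity of $\sqrt 2$) is what pins down which of the two square roots of $2$ we have named, which is implicitly used to know the ordering is the ``expected'' one.

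**Proof of part (2): density of $M$ in $V$.** This is the substantive point. A general element of $V$ is $v = x + \sqrt 2 y$ with $x, y \in M$, $y \neq 0$ (the case $y = 0$ being trivial since then $v \in M$). We must find elements of $M$ arbitrarily close to $v$ from both sides in the ordering of $V$; equivalently, for any $\varepsilon \in M_{>0}$ we must find $m \in M$ with $v - \varepsilon < m < v + \varepsilon$ in $V$. Write this out: $m < v + \varepsilon$ means $0 < (x + \varepsilon - m) + \sqrt 2 \, y$, i.e. $(y, m - x - \varepsilon) \in \Psq$ when $y > 0$ (with the reversed inequality relating to $\Psq$ when $y<0$, handled symmetrically). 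So I need $m \in M$ with $m - x - \varepsilon$ lying below the cut $\{t : (y,t) \in \Psq\}$ and $m - x + \varepsilon$ lying above it. This is exactly possible by Axiom 5: the set $\{t : (y,t) \in \Psq\}$ is downward closed with no supremum, and the infimum of the ``gap'' $\{t_2 - t_1 : (y,t_1) \in \Psq,\ (y,t_2) \notin \Psq\}$ is $0$. The last clause says precisely that one can straddle the cut with an interval of length less than any prescribed $2\varepsilon$: pick $t_1, t_2$ with $(y,t_1) \in \Psq$, $(y,t_2) \notin \Psq$, and $t_2 - t_1 < 2\varepsilon$; then set $m = x + \varepsilon + t_1$ (or the midpoint $m = x + \tfrac{t_1 + t_2}{2}$), and check $m - x - \varepsilon \le t_1$ gives $m < v + \varepsilon$ while $m - x + \varepsilon \ge t_2$ (using $t_2 - t_1 < 2\varepsilon$) gives $m > v - \varepsilon$. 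The case $y < 0$ follows by the same computation with the roles of the inequalities reversed, or simply by replacing $v$ with $-v$.

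**Main obstacle.** The genuinely load-bearing step is the density argument, and within it the only delicate bookkeeping is making sure the translation between ``closeness in the order on $V$'' and ``straddling a cut of $\Psq$'' is done with the correct signs — in particular handling $y > 0$ versus $y < 0$, and confirming that Axiom 5's ``$\mathrm{Inf}\{y_2 - y_1 : \dots\} = 0$'' is what converts ``some element of $M$ near $v$'' into ``elements of $M$ arbitrarily near $v$.'' Part (1) and the $y = 0$ case are purely formal. I would present part (1) in a sentence or two and devote the bulk of the proof to the straddling computation for part (2).
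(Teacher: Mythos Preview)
Your argument is correct. The paper does not give a detailed proof of this claim: it simply asserts that ``the definition of the ordering and Axiom (3) allows us to conclude'' both parts, so there is no substantive proof in the paper to compare yours against. Your part (1) is exactly the one-line unwinding of the definition $a+\sqrt 2\,b>0 \Leftrightarrow (b,-a)\in\Psq$, and your part (2) correctly isolates Axiom (5) --- specifically the clause $\inf\{y_2-y_1:(x,y_1)\in\Psq,\ (x,y_2)\notin\Psq\}=0$ --- as the content that forces $M$ to straddle every irrational cut $\sqrt 2\,y$ within any prescribed $\varepsilon$. The paper's pointer to ``Axiom (3)'' is almost certainly a slip for Axiom (5); your identification of the relevant axiom is the right one. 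The choice $m=x+\varepsilon+t_1$ works exactly as you say (downward closure of the cut gives $t_1+2\varepsilon>t_2\notin\Psq$), and the reduction of the case $y<0$ to $y>0$ via $v\mapsto -v$ is fine.
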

We can now endow $V$ with an $\CL_{\sqrt 2}$-structure, by interpreting $\Psq$ as we
did over $\mathbb Q$. Clause (1) above then implies that $\CM$ is a substructure of
$V$ as an $\CL_{\sqrt 2}$-structures.

 The following lemma is similar to
\cite[Proposition 3.3]{ElHaKe}:

\begin{lemma} \label{QE}
    The theory $T$ is complete and has quantifier elimination.
\end{lemma}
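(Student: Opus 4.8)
The plan is to prove completeness and quantifier elimination simultaneously by the standard back-and-forth criterion: it suffices to show that if $\CM_1, \CM_2 \models T$ are $\omega$-saturated, $A_1 \subseteq \CM_1$ and $A_2 \subseteq \CM_2$ are finitely generated substructures (in the language $\CL_{\sqrt 2}$), and $h\colon A_1 \to A_2$ is an $\CL_{\sqrt 2}$-isomorphism, then for any $b \in M_1$ there is $b' \in M_2$ with $h \cup \{(b,b')\}$ still a partial isomorphism. Since any quantifier-free type over $\0$ is realised in $\Q2$ (this is axiom (8)), quantifier elimination together with this will give completeness.

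The key reduction is to pass from $\CM_i$ to the associated ordered $F$-vector space $V_i = F \otimes_\Qq M_i$, where $F = \Qq(\sqrt 2)$, using the discussion preceding the lemma: each $\CM_i$ embeds as an $\CL_{\sqrt 2}$-substructure of $V_i$, the predicate $\Psq$ on $M_i$ is exactly the trace of ``$y < \sqrt 2 x$'' in $V_i$ (by the Claim), and $M_i$ is dense in $V_i$. Thus a finitely generated $\CL_{\sqrt 2}$-substructure $A_i$ of $M_i$ generates a finitely generated ordered $F$-subspace $\la A_i\ra_F$ of $V_i$, and $h$ extends canonically to an ordered-$F$-space isomorphism $\tilde h\colon \la A_1\ra_F \to \la A_2\ra_F$ (the quantifier-free $\CL_{\sqrt 2}$-data on $A_i$ — namely the $\Qq$-linear relations and the ordering, plus the values of $\Psq$ — determine and are determined by the ordered-$F$-space structure on $\la A_i\ra_F$, using axioms (2),(4),(6) to pin down that multiplication by $\sqrt 2$ behaves correctly). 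Now I invoke quantifier elimination for ordered $\Qq$-vector spaces, or rather for ordered divisible ordered abelian groups / ordered $F$-vector spaces: given $b \in M_1$, the element $b$ realises a cut in the ordered $F$-space $\la A_1\ra_F$; transport this cut via $\tilde h$ to a cut in $\la A_2 \ra_F$. By $\omega$-saturation of $\CM_2$ and density of $M_2$ in $V_2$, this cut is realised by some $b' \in M_2$ (one must check the cut is consistent: this is where axiom (5), saying the fibres of $\Psq$ have no supremum and the ``infimum of the gap is $0$'', is used to guarantee that the cut cut out by $b$ over the $F$-span does not land exactly on a point of $\la A_2\ra_F \setminus M_2$ in a way that forces $b' \notin M_2$ — more precisely, $b$ sits in $M_1$, so its image cut is a cut that a dense subset point can fill).

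Having chosen $b'$, I must verify that $h \cup \{(b,b')\}$ extends to an $\CL_{\sqrt 2}$-isomorphism of the substructures generated by $A_1 \cup \{b\}$ and $A_2 \cup \{b'\}$ inside $M_1, M_2$. This amounts to checking that $\tilde h$ extends to the ordered $F$-spaces $\la A_1, b\ra_F \to \la A_2, b'\ra_F$ — immediate since the extension is by a single element realising the corresponding cut, and ordered vector spaces have QE — and then checking that the new $\Psq$-relations among elements of the extended substructure of $M_i$ are preserved: but each such relation $(u,v)\in\Psq$ with $u,v \in \la A_1,b\ra_F \cap M_1$ is equivalent to $v < \sqrt 2 u$ in $V_1$, a condition about the ordered $F$-space, hence preserved by the ordered-$F$-space isomorphism. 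One also needs that the image substructure actually lands inside $M_2$ (not just $V_2$): since $b' \in M_2$ and $A_2 \subseteq M_2$ and $M_2$ is a $\Qq$-subspace closed under the relevant operations, any $\CL_{\sqrt 2}$-term in these elements that is $\CL_{\sqrt 2}$-defined stays in $M_2$.

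The main obstacle I anticipate is the consistency/realisability step: ensuring that the cut determined by $b$ over $\la A_1 \ra_F$, transported to $\la A_2\ra_F$, is genuinely realised by an element of $M_2$ rather than only by an element of $V_2 \setminus M_2$. The subtlety is that $\sqrt 2 \cdot a$ for $a \in A_2 \cap M_2$ lies in $V_2 \setminus M_2$ in general, so the $F$-span $\la A_2 \ra_F$ contains ``irrational'' points, and one must use axiom (5) (the density/no-supremum condition on fibres of $\Psq$, equivalently that $\sqrt 2 a$ is approximated arbitrarily well from both sides by elements of $M_2$) together with $\omega$-saturation to produce $b' \in M_2$. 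Handling the $0$-dimensional case ($b$ already $F$-algebraic over $A_1$, i.e. $b \in \la A_1\ra_F$, but possibly $b \notin A_1$ as an $\CL_{\sqrt2}$-substructure) and the case where $b$ falls in a ``gap'' requires care, and this is where axioms (5) and (6) (composition giving multiplication by $2$) do the real work of forcing the theory to be complete rather than just model-complete.
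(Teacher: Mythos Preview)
Your approach is essentially the paper's: pass to the ordered $F$-vector spaces $V_i=F\otimes_{\Qq}M_i$, extend the partial isomorphism to the $F$-spans, and realize the transported cut in $M_2$ by density plus saturation. The obstacle you anticipate dissolves via the observation (which the paper makes explicitly) that $\la A\ra_F\cap M=A$ for any $\Qq$-subspace $A\subseteq M$, since $\sqrt 2\,a\notin M$ for nonzero $a\in M$ by axiom (5); hence $b\in M_1\setminus A_1$ already forces $b\notin\la A_1\ra_F$, the type over $\la A_1\ra_F$ is non-algebraic, and density of $M_2$ in $V_2$ together with saturation yields a realization in $M_2$ immediately --- so no separate ``$0$-dimensional case'' arises, and axiom (6) is used only upstream (to make $V_i$ an ordered $F$-vector space), not in the back-and-forth step.
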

\proof
    Let $\CQ_1, \CQ_2\models T$ be $\kappa$-saturated models of the same cardinality.
In order to prove quantifier elimination it suffices to prove (see for example
\cite[Corollary 3.1.6]{Marker-book}):

If $A$ is a  substructure of $\CQ_1$ and $\CQ_2$ of cardinality smaller than
$\kappa$, then for every $a_1\in Q_1$ there is $a_2\in Q_2$ such that $a_1$ and
$a_2$ have the same quantifier-free type over $A$.

As above, consider the ordered $F$-vector spaces $\CG_i:=F\otimes_{\mathbb Q}
\CQ_i$. Since $Q_i$ is dense in $G_i$, and $\CG_i$ is o-minimal, the saturation of
$\CQ_i$ implies that $\CG_i$ is also
     $\kappa$-saturated.  Let $B_i$ be the $F$-span of $A$ inside $\CG_i$. Then
     $B_1$ and $B_2$ are isomorphic-over-$A$ ordered vector spaces (both isomorphic to
     $A+{\sqrt 2}A$, with the same ordering). Thus we may write $B=B_1=B_2$

    Let $p(x):=\tp_{\CG_1}(a_1/B)$. We may assume that $a_1\notin A$ and hence
    $a_1\notin B$ (note that $B\cap Q_1=A$).
    By the completeness of the theory of ordered $F$-vector
    spaces and saturation, we can find $a_2\in \CG_2$ such that $a_2\models p(x)$. In fact, because $\CG_2$
    is $\kappa$-saturated and $p$ is non-algebraic there is more than one such
    $a_2$, so since $Q_2$ is dense in $G_2$, we can find such an $a_2$ inside $Q_2$.

Finally, since each $\CQ_i$ is a substructure of $\CG_i$, and $a_1,a_2\models p$, it
follows that the quantifier-free types of $a_1$ and $a_2$ over $A$, in the
structures $\CQ_1$, $\CQ_2$, respectively, are the same. This completes the proof of
quantifier elimination.

To see that $T$ is complete we just  notice that every model of $T$ contains the
structure $\Q2$, which is itself a model of $T$.\qed

\begin{corollary}
    The theory $T$ is a tight weakly o-minimal non-valuational theory and
    $T^*$ is the theory of ordered $\Qq^{\sqrt{2}}$-ordered vector spaces (in the language $\CL_{\sqrt 2}$).
\end{corollary}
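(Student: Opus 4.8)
The plan is to read off both assertions from the quantifier elimination established in Lemma \ref{QE}, together with the analysis of the vector space $\CG = F\otimes_\Qq \CQ$ carried out just before it. First I would note that weak o-minimality is immediate from QE: by Lemma \ref{QE} every definable subset of $Q$ (in one variable, over parameters) is a boolean combination of instances of the atomic formulas, i.e.\ of sets defined by $\Qq$-linear (in)equalities in $x$ and by $\Psq(t(x),s(x))$ for linear terms $t,s$; each such set is, in the ordered vector space $\CG$, a finite union of convex sets (a point, a ray, or an interval with endpoints in $A + \sqrt2 A$), hence its trace on $Q$ is again a finite union of convex subsets of $Q$. So $\Th(\Q2)=T$ is weakly o-minimal. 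Non-valuationality then follows because $\la\Qq,<,+\ra$ is a divisible ordered group with no proper nontrivial convex subgroups, and this property is inherited by any expansion and is preserved under elementary equivalence; alternatively one observes directly from the axioms that the only $\0$-definable convex subgroups are $\{0\}$ and the whole group.

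For tightness, the key point is Axiom (5) together with the identification $\Psq(x,y)\iff y<\sqrt2\,x$ in $\CG$. I would show that every definable cut in a model $\CM\models T$ is of one of finitely many $\0$-definable forms. Indeed, by weak o-minimality a definable cut is determined by the finitely many "endpoints" appearing in a strong cell decomposition of a defining formula, and by QE every such endpoint lies in $\dcl$ of the parameters using only the functions $x\mapsto qx$ ($q\in\Qq$) and the single new ingredient $x\mapsto\sqrt2\,x$ coming from $\Psq$. Concretely, the only $\0$-definable families of cuts needed are those of the form $\{y : y < q_1 x_1 + \sqrt2\,q_2 x_2 + q_0\}$ for rationals $q_0,q_1,q_2$ with $(q_1,q_2)$ ranging over a finite set (in fact one can reduce, using divisibility, to the two families $x\mapsto x$ and $x\mapsto\sqrt2\,x$, up to $\Qq$-linear combinations, which is still only finitely many families once coefficients are normalised). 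Hence $\CM$ is tight, and by Proposition \ref{tight1} the structure $\CM^*$ is interpretable in $\CM$ without parameters.

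Finally, for the identification of $T^*$: by construction $\bar M$ is the definable Dedekind completion of $M$, and the argument above shows this completion is exactly $V = F\otimes_\Qq M$ with its ordered $F$-vector space structure, the embedding $M\hookrightarrow \bar M$ being $1\otimes(\cdot)$ and $\Psq$ being (the graph of, a shear by) the scalar $\sqrt2$. By Theorem \ref{elitzur} and Proposition \ref{same language}, $\CM^*_\0$ has the same definable sets as $\bar\CM_M = \CM^*_M$; and by Theorem \ref{intersection}/Corollary \ref{equiv} the theory $T^* := \Th(\CM^*_\0)$ is an invariant of $T$. It remains to check that on $\bar M = V$ the $\0$-definable atomic sets $\cl_{\bar M}(X)$, for $X\sub M^n$ $\0$-definable in $\CM$, generate (under boolean combinations and projection–free definability) precisely the definable sets of $V$ regarded as an ordered $\Qq^{\sqrt2}$-vector space in the language $\CL_{\sqrt2}$: the closure of each defining inequality of $\CM$ becomes the corresponding (non-strict) $\Qq$-linear or $\sqrt2$-scaled inequality in $V$, which is atomic in the ordered $F$-vector space language, and conversely every atomic $\CL_{\sqrt2}$-set over $M$-parameters arises this way. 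Since the ordered $F$-vector space theory has QE, this shows $T^* = \Th(V)$ as an ordered $\Qq^{\sqrt2}$-vector space in $\CL_{\sqrt2}$.

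The main obstacle I anticipate is the bookkeeping in the tightness argument: one must be careful that "finitely many $\0$-definable families of cuts" genuinely suffices, i.e.\ that the coefficients $(q_1,q_2)$ of the linear terms produced by QE can be normalised (using the scalar functions already in the language) into a finite list, rather than merely a $\0$-definable-but-infinite one. Making this precise — essentially showing that the only new "direction" $\Psq$ contributes is the single slope $\sqrt2$ — is where the axioms (4), (5), (6) do the real work, and it is the step I would expect to write out most carefully.
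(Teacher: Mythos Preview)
Your proposal is correct and follows essentially the same route as the paper: use QE from Lemma \ref{QE} to read off weak o-minimality and the shape of definable cuts, identify $\bar M$ with $V=F\otimes_\Qq Q$, and conclude tightness from the explicit description of cuts. The one place where you are working harder than necessary is the tightness bookkeeping you flag at the end: since $q_1a_1+\sqrt2\,q_2a_2=(q_1a_1)+\sqrt2\,(q_2a_2)$ with $q_ia_i\in Q$, the single $\0$-definable two-parameter family $(x_1,x_2)\mapsto x_1+\sqrt2\,x_2$ already captures every definable cut, so no normalisation of coefficients is needed; the paper disposes of tightness in one line this way.
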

\begin{proof}
    The atomic subsets of $Q$, the universe of any $\CQ\models T$, are rays with or without endpoints.
    By quantifier elimination the definable subsets of $Q$ are in the boolean algebra generated by
    those, proving the
    weak o-minimality. The same argument also shows that the only definable cuts are non-valuational,
    because so are the atomic cuts.

By the proof of lemma \ref{QE} and the preceding discussion, each model $\CQ$ of $T$
is a dense substructure of the o-minimal structure $V=F\otimes_{\mathbb Q} Q$. It is
easy to verify that the intersection with $Q$ of every ray $(-\infty,a)$ in $V$ is
definable in $\CQ$, and hence every element of $V$ realizes a definable cut in
$\CQ$. Conversely, by quantifier elimination, the definable cuts in any model
$\mathcal Q$ of $T$ are of the form $x_1+r\sqrt{2}x_2$ for $r\in \Qq$ and
$x_1,x_2\in Q$, so they are realized in $V$. It follows that $V$ is the canonical
completion of $\CQ$, and its theory, in the language $\CL_{\sqrt 2}$, is that of an
ordered $F$-vector space.

To see that $T$ is tight we note that each definable cut in $\CQ$ can also be
written as $x_1+\sqrt{2}x_2$, for $x_1,x_2\in Q$, and that this is a definable
family in  $T$.
\end{proof}

Note that the above construction worked because of the algebraicity of $\sqrt 2$. If
we consider
 $\Qq^t$,  the expansion of $\Qq_{vs}$ by $x\mapsto tx$ where $t$ realizes  a cut defining
 a real transcendental
  number we would not obtain a tight structure. See the example
  $\Qq^\pi_{vs}$ in \cite{ElHaKe}.

We now prove:
\begin{theorem}
    The structure $\Q2$ is not elementarily equivalent to a reduct of an o-minimal trace.
\end{theorem}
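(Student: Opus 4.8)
The plan is to derive a contradiction from the assumption that some $\CK\equiv\Q2$ is (the $\CL_{\sqrt 2}$-reduct of) an o-minimal trace, by forcing the element $\sqrt 2$ — which lives in the o-minimal completion of $\CK$ but, by axiom $(5)$ of $T$, not in $\CK$ itself — to be definable over $M_0$ inside the ambient o-minimal structure of a witnessing dense pair, hence to lie in $M_0$. Concretely: suppose $\CK\equiv\Q2$, let $\CM_0\prec\CN$ be a proper dense pair of o-minimal structures witnessing that $\CK$ is the $\CL_{\sqrt 2}$-reduct of the structure $\CR$ induced on $M_0$ by $(\CN,\CM_0)$, and let $\bar\CN$ be the canonical o-minimal completion of $\CN$, so that $M_0$ is dense in $N$ and $N$ is dense in $\bar N$. (The pair is necessarily proper, since otherwise $\CR$, hence $\CK$, would be o-minimal, contradicting $\CK\equiv\Q2$.) Passing to a sufficiently saturated, strongly homogeneous elementary extension of the pair $(\CN,\CM_0)$ — which changes neither $\CR$ nor the fact that its $\CL_{\sqrt 2}$-reduct is $\equiv\Q2$, and is compatible with $\CR$ remaining an o-minimal trace — we may assume $\CN$ is $\kappa$-saturated and strongly $\kappa$-homogeneous for some $\kappa>|M_0|$.

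First I would extract the $\sqrt 2$-data. Since $\CK\equiv\Q2$, the corollary computing $T^{*}$ shows that the completion $\bar\CK$ is an ordered $\Qq^{\sqrt 2}$-vector space with universe $M_0\oplus\sqrt 2\,M_0$, and since $\CK$ is a reduct of $\CR$ this completion embeds canonically into $\bar\CN$; so $\sqrt 2:=\sqrt 2\cdot 1$ lies in $M_0\oplus\sqrt 2\,M_0\subseteq\bar N$. The relation $\Psq$, being basic in $\CL_{\sqrt 2}$, is $\emptyset$-definable in $\CR$, and by axiom $(5)$ of $T$ the fibre $\{y:(1,y)\in\Psq\}=\{y\in M_0:y<\sqrt 2\}$ has no supremum in $M_0$ (so $\sqrt 2\notin M_0$) and the infimum of the corresponding gaps is $0$, so $M_0$ accumulates at $\sqrt 2$ from both sides in $\bar N$ (using that $(M_0)_{>0}$ is coinitial in $(\bar N)_{>0}$). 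Consequently $\sqrt 2$ is the unique element of $\bar N$ realising the cut $\{a\in M_0:a<\sqrt 2\}\mid\{a\in M_0:a>\sqrt 2\}$.

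Next I would push $\Psq$ up to $\CN$. Since $\CR$ is the structure induced by $(\CN,\CM_0)$, the set $\Psq^{\CR}$ is the restriction to $M_0^2$ of some $(\CN,\CM_0)$-definable $\hat P\subseteq N^2$; replacing $\hat P$ by $\{(x,y)\in N^2:\forall a\in M_0\,(a<y\rightarrow (x,a)\in\hat P)\}$ we may take $\hat P$ to be downward closed in the second variable, still with $\hat P\cap M_0^2=\Psq^{\CR}$. Then $\hat P_{1}\subseteq N$ is a downward closed $(\CN,\CM_0)$-definable set with $\hat P_{1}\cap M_0=\{a\in M_0:a<\sqrt 2\}$. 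By the structure theory of dense pairs \cite{vdDriesDense}, every $(\CN,\CM_0)$-definable subset of $N$ differs from an $\CN$-definable set by one with empty interior; a short argument then shows that a downward closed such set equals, off an empty-interior error, an $\CN$-definable initial segment, and hence has its supremum in $N\cup\{+\infty\}$. By the previous paragraph that supremum is $\sqrt 2$; thus $\sqrt 2\in N$.

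Finally, the automorphism argument: the set $\{a\in M_0:a<\sqrt 2\}$ is fixed setwise by every $\theta\in\aut(\CN/M_0)$, so $\theta$ fixes its supremum, $\theta(\sqrt 2)=\sqrt 2$. As $\CN$ is $\kappa$-saturated and strongly $\kappa$-homogeneous with $\kappa>|M_0|$, this forces $\tp^{\CN}(\sqrt 2/M_0)$ to be algebraic, so $\sqrt 2\in\acl^{\CN}(M_0)=\dcl^{\CN}(M_0)$ by o-minimality, and $\dcl^{\CN}(M_0)=M_0$ since $\CM_0\prec\CN$. Hence $\sqrt 2\in M_0$, contradicting the second paragraph. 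The step I expect to be the main obstacle is the third one: making the passage from the pair-definable $\hat P$ to an honest element $\sqrt 2\in N$ fully rigorous — this leans on the structure theory of \cite{vdDriesDense} (equivalently, that the open core of the pair is $\CN$) together with the elementary observation that a downward closed pair-definable subset of $N$ has its supremum in $N\cup\{+\infty\}$ — and on the book-keeping in the first paragraph ensuring that replacing $(\CN,\CM_0)$ by a saturated, homogeneous elementary extension leaves $\CR$ (and its $\CL_{\sqrt 2}$-reduct) unchanged, which is exactly what legitimises the final automorphism argument.
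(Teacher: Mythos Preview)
Your argument contains a genuine gap, and it is not where you flag it (the third paragraph) but in the final automorphism step. You claim that after passing to an elementary extension of the pair you may assume $\CN$ is $\kappa$-saturated and strongly $\kappa$-homogeneous for some $\kappa>|M_0|$. This is impossible in any proper dense pair. Indeed, if $M_0\subsetneq N$ is dense and $s\in N\setminus M_0$, then $\tp^{\CN}(s/M_0)$ is non-algebraic (since $\acl^{\CN}(M_0)=M_0$); were $\CN$ $|M_0|^+$-saturated, this type would have a second realization $s'\neq s$ in $N$, and then the open interval between $s$ and $s'$ would miss $M_0$, contradicting density. So the implication ``fixed by every $\theta\in\aut(\CN/M_0)$ $\Rightarrow$ algebraic over $M_0$'' is unavailable here. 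Concretely: in the dense pair $(\mathbb R,\mathbb Q)$ of ordered $\mathbb Q$-vector spaces, density forces $\aut(\mathbb R/\mathbb Q)=\{\mathrm{id}\}$, yet $\dcl^{\mathbb R}(\mathbb Q)=\mathbb Q\subsetneq\mathbb R$. In fact \emph{every} element of $N$ is fixed by $\aut(\CN/M_0)$, for exactly the reason you give, so the argument cannot distinguish $\sqrt 2$ from any other element of $N\setminus M_0$.

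The paper's proof meets this obstacle by a genuinely different mechanism. Using van den Dries's Theorem~2 it writes $\Psq=Y\cap M_0^2$ with $Y$ definable in the ambient o-minimal structure, and from $Y$ extracts a definable endomorphism $\lambda_{\sqrt 2}$ of the additive group satisfying $\lambda_{\sqrt 2}\circ\lambda_{\sqrt 2}=2\cdot\mathrm{id}$. The crucial step --- for which your automorphism argument was meant to substitute --- is the Miller--Starchenko growth dichotomy \cite{MilStar}: either $\lambda_{\sqrt 2}$ is $\emptyset$-definable, whence it descends to $\CM_0$ and gives the cut a supremum there (contradicting axiom~(5)); or the ambient structure defines a compatible real closed field, in which $\lambda_{\sqrt 2}$ must be multiplication by the field element $\sqrt 2$, hence again $\emptyset$-definable. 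This dichotomy is precisely what bridges ``definable over some parameters'' and ``definable over $\emptyset$'', the gap your approach does not close.
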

\proof
    Assume towards a contradiction that there is a dense pair $(\CR,\CQ)$ of o-minimal
    expansions of groups such that $\Q2$ is elementarily equivalent to a reduct of the
    trace which this pair induces on a structure $\CQ$. By that we mean that there is some
    expansion $\hat {\mathcal Q}$ of $\la \mathcal Q,<,+ \ra$
    satisfying $T$, such that every definable set in $\hat {\mathcal Q}$ is definable
    in the dense pair $(\CR,\mathcal Q)$. While some of these sets are
    already definable in the o-minimal structure $\CQ$ others may be the intersection with $Q^n$ of
    subsets of $R^n$
    that are definable in  $\CR$ over parameters which are not in $\CQ$. The order relation $<$
    and the group operation $+$ are assumed to be
    definable in $\CQ$.

Let us consider the predicate $\Psq(\hat{\mathcal Q})$. It is a definable set in
$(\CR,\CQ)$, hence by \cite[Theorem2]{vdDriesDense}, there is a definable $Y_{\sqrt
2}\sub R^2$ in the o-minimal structure $\CR$ such that $Y_{\sqrt 2}\cap \mathcal
Q=\mathcal \Psq(\hat{\mathcal Q})$. Because $\mathcal Q$ is dense in $R$ (the
universe of the o-minimal structure $\CR$), it easily follows that for every $x\in
Q$, there is $y(x)\in R$ such that
$$y(x)=sup\{y\in \mathcal Q:(x,y)\in
Y_{\sqrt 2}\}.$$ By taking the closure of the graph of $y(x)$ we obtain an
$\CR$-definable function, which we will denote by $\lambda_{\sqrt 2}:R\to R$, which
gives $y(x)$ for every $x\in \mathcal Q$. It is not hard to see that $\lambda_{\sqrt
2}$ is a definable automorphism of $\la R,+\ra$ satisfying $\lambda_{\sqrt
2}\circ \lambda_{\sqrt 2}(x)=2x.$

We now  consider two cases. If the function $\lambda_{\sqrt 2}$ is $\0$-definable in
$\CR$ then it comes from a definable function in the o-minimal structure $\mathcal
Q$, and in particular, for every $x\in \mathcal Q$, the set $\{y\in \mathcal
Q:(x,y)\in \Psq\}$ has a supremum in $\mathcal Q$. This contradicts the axioms of
$T$.

On the other hand, if $\lambda_{\sqrt 2}$ is not $\0$-definable then by
\cite{MilStar}, one can define in the o-minimal structure $\CR$ a multiplication
function $\cdot$ on $R^2$, making $\la R,<,+,\cdot\ra$ a real closed field, call it
$K$. A-priori the multiplication function might not be $\0$-definable but in that
case there is a $\0$-definable family of such multiplications all of which expand
$\la R,+\ra$ to   a real closed field. By definable choice we may find one such
multiplication function that is $\0$-definable.

 Since $\lambda_{\sqrt 2}$ is
an $\CR$-definable automorphism of the additive group of $K$ it must be of the form
$x\mapsto c\cdot x$ for some scalar $c\in K$. Because $\lambda_{\sqrt 2}\circ
\lambda_{\sqrt 2}(x)=2x$, and because $\lambda_{\sqrt 2}$ takes positive values on
$x>0$, the scalar $c$ is necessarily $\sqrt 2$ (in the sense of $K$). In particular,
$\lambda_{\sqrt 2}$ is $\0$-definable in $\CR$, yielding a contradiction as
before.\qed


\section{The theory of $(\CM^*,\CM)$}  \label{pairs}

  From now on, given a
complete non-valuational theory $T$ we will denote by $T^*$ the theory of the
associated o-minimal completion, in the language $\CL^*_\0$ (by Corollary
\ref{equiv}, the theory $T$ indeed determines $T^*$). We write $\bar \CM$ and
$\CM^*$,  for the structure $\bar \CM_\0$, and $\CM^*_\0$, respectively.

While $\CM$ and $\CM^*_\0$ initially have different signatures it will be convenient
to treat them in the same langauge. We thus modify the language of $\CM$.

\begin{lemma}
    Let $\CM$ be a weakly o-minimal non-valuational structure. Let $\CM_0$ be the reduct of $\CM$
    generated by all $\0$-definable closed sets. Then every $\0$-definable set in  $\CM$
    is $\0$-definable in $\CM_0$. In particular, $\CM$ and $\CM_0$ have the same $\0$-definable sets.
\end{lemma}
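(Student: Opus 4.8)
The plan is to read the result off Proposition~\ref{observation}(1). First I would recall exactly what that proposition gives: every $\0$-definable subset $X\subseteq M^n$ of $\CM$ can be written as a boolean combination of $\0$-definable sets, each of the form $\cl(U)$ (closure taken inside $M^n$) for some open $\0$-definable $U\subseteq M^n$. Now each such $\cl(U)$ is a closed $\0$-definable subset of $M^n$, hence --- by the very definition of $\CM_0$ --- it is (the interpretation of) one of the atomic relations of $\CM_0$, and in particular it is quantifier-free $\0$-definable in $\CM_0$. Since $\CM_0$-definability is closed under boolean combinations, $X$ is then $\0$-definable in $\CM_0$ (in fact quantifier-free $\0$-definable). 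That establishes the first assertion.

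For the ``in particular'' clause: $\CM_0$ is a reduct of $\CM$ by construction, since each of its atomic relations is a $\0$-definable (closed) subset of some $M^n$ and hence $\0$-definable in $\CM$, so every $\0$-definable set of $\CM_0$ is already $\0$-definable in $\CM$. Combining this with the previous paragraph shows that $\CM$ and $\CM_0$ have exactly the same $\0$-definable sets.

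I do not expect a real obstacle here: once Proposition~\ref{observation}(1) is in hand the argument is essentially a one-line deduction, and the genuine work sits inside that proposition --- ultimately in strong cell decomposition (Fact~\ref{Facts}) together with the point that the closure of a $\0$-definable set of the weakly o-minimal non-valuational $\CM$ is again $\0$-definable (an endpoint of a convex component of a definable subset of $M$ lies either in $M$ or in $\bar M\setminus M$, a fixed finite amount of data for a given $\0$-definable set, so no new parameters are introduced). If one preferred not to cite Proposition~\ref{observation}(1) one could instead rerun its short argument directly: strong cell decomposition reduces to the case of a single cell; a cell is either a point or the graph over an open cell $C$ of a tuple of strongly continuous functions $f_i\colon C\to M$; and each such graph is the complement in $C\times M$ of the open definable set $\{(x,y): y>f_i(x)\}\cup\{(x,y): y<f_i(x)\}$, so every $\0$-definable set is obtained by boolean operations from closed $\0$-definable sets.
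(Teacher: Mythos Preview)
Your proposal is correct and matches the paper's own proof, which simply says ``This follows from the proof of Proposition~\ref{observation}.'' You have spelled out exactly the intended deduction: Proposition~\ref{observation}(1) expresses every $\0$-definable set as a boolean combination of closures of open $\0$-definable sets, and each such closure is a closed $\0$-definable set, hence atomic in $\CM_0$.
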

\begin{proof}
    This follows from the proof of Proposition \ref{observation}.
%
%
%
\end{proof}

    So from now on we will assume that $\CM$ is given in the signature consisting of
    a function symbol for $+$, the ordering $<$, and a predicate for each
    $\0$-definable closed set in $M^n$. We let $\CL$ be the associated language, so
    we may use the same language for $\CM^*$. By Proposition
    \ref{QE}, the structure $\CM^*$ eliminates quantifiers.

 We let $\CL^P=\CL\cup \{P\}$, where $P$ is a unary
     predicate. We consider the $\CL^P$-structure
     $$\CM^P=\la \CM^*, \CM\ra,$$ where the interpretation of $P$ is
     $M$. As we will see, the theory of $\CM^P$ depends only on $T$.
     We propose the following axiomatization for  this theory:

Let $T^d$ be the $\CL^P$-language axiomatized as follows (we write
$(\CM',\CM)$ for models of $T^d$),

\begin{enumerate}
\item $\CM \models T$, $\CM'\models T^*$.

\item  $M$ dense in $M'$.

\item  Every definable cut in $\CM$ has a supremum in $M'$.

\item (when $T$ is tight) Every element of $\CM'$ realizes a
definable cut in $\CM$.

\end{enumerate}

Our goal is to prove:
\begin{theorem}\label{axioms} The theory $T^d$ is complete.\end{theorem}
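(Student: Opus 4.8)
The plan is to prove completeness of $T^d$ by a back-and-forth argument between two $\kappa$-saturated models $(\CM_1',\CM_1)$ and $(\CM_2',\CM_2)$ of $T^d$ of the same cardinality, showing that the family of partial $\CL^P$-isomorphisms with suitable ``closure'' properties has the back-and-forth property. Since $T^*$ is determined by $T$ (Corollary \ref{equiv}), axiom (1) alone already forces $\CM_1\equiv\CM_2$ and $\CM_1'\equiv\CM_2'$, so the work is entirely in how the predicate $P$ interacts with the rest of the structure. The key preliminary observation is that, because $\CM^*$ eliminates quantifiers (Proposition \ref{QE}) and every element of $M'$ is in $\dcl_{\CM^*}$ of finitely many cuts definable over $M$, the ``right'' notion of a small substructure to carry back and forth is a pair $(B',B)$ with $B\subseteq M_i$, $B' = \dcl_{\CM_i'}(B)$, and $B$ closed under the relevant $\CM$-definable partial functions into $\bar M$; by Theorem \ref{intersection} and Lemma \ref{early-dcl}, such a $B'$ meets $M_i$ in exactly $\dcl_{\CM_i}(B)$, so the combinatorics of the two sorts are controlled by the single countable-rank structure $\CM$.

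The back-and-forth step splits into two cases according to whether the new element lies in $P$ or not. \textbf{Adding a point of $P$:} given $a\in M_1$, we must find $b\in M_2$ realizing the same type over the (finite) domain; since the induced structure on $M_i$ is just $\CM_i$ (this is the whole point of the canonical completion, Fact \ref{Facts} together with the definition of $\CM^*_A$), and $\CM_1\equiv\CM_2$, we can find $b\in M_2$ with the same $\CM$-type, and then check that closing off the domain by the $\CM$-definable functions into $\bar M$ keeps the map an $\CL^P$-isomorphism — here axiom (3) is used to guarantee that the suprema of the relevant cuts (which are new elements of $M'$) can be matched on the $\CM_2'$ side. \textbf{Adding a point of $M'\setminus P$:} given $a\in M_1'$, by quantifier elimination in $\CM^*$ and saturation it suffices to match the cut that $a$ realizes over the current domain in the ordering of $M_1'$; density of $M$ in $M'$ (axiom (2)) plus saturation of $\CM_2$ lets us realize this cut on the $M_2$ side to any prescribed $\CM_2$-type, and then $\sup$ of an appropriate definable cut — existing in $M_2'$ by axiom (3) — gives the desired $b\in M_2'$. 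In the tight case, axiom (4) makes this even more transparent: every element of $M'$ is literally $\sup$ of a cut in one of finitely many $\0$-definable families, so the two sorts are tied together definably and the back-and-forth is essentially carried by $\CM$ alone.

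I expect the main obstacle to be the \emph{non-tight} case of adding a point $a\in M'\setminus P$: one must show that the cut realized by $a$ over the domain, together with \emph{all} the induced data about which closed predicates $\hat X$ (for $X$ definable in $\CM$ over the domain parameters) contain $a$ and nearby configurations, is consistently realizable in $\CM_2'$. This requires knowing that the quantifier-free $\CL^*_0$-type of $a$ over $\dcl$ of the domain is determined by ``order-type plus the finitely many cut-family data'', which is exactly what Proposition \ref{QE}(1) gives once one checks that the relevant $\hat X$ are, near $a$, controlled by finitely many strongly continuous boundary functions via strong cell decomposition (Fact \ref{Facts}); the density axiom then transfers this to the $M$-side. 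The remaining steps — verifying that the domains built are genuine $\CL^P$-substructures, that the union of the chain is an isomorphism of the required kind, and that one may start from the prime-model situation — are routine given Theorem \ref{intersection}, Lemma \ref{fibres} and Lemma \ref{early-dcl}.
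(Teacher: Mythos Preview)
Your back-and-forth outline is on the right track for the tight case, but there is a genuine gap in the non-tight case that your framework cannot close. You propose carrying pairs $(B',B)$ with $B\subseteq M_i$ and $B'=\dcl_{\CM_i'}(B)$; in other words, every element you ever add on the $M'$-side is in $\dcl_{\CM'}(M)$. But by Lemma \ref{dcl} (in the paper), $\dcl_{\CM'}(M)=\bar M$, and in the non-tight case $\bar M\subsetneq M'$: a saturated model of $T^d$ contains elements of $M'$ that do \emph{not} realize any $\CM$-definable cut, i.e.\ elements generic over $M$. Your ``adding a point of $M'\setminus P$'' step proposes to match such an $a$ by taking the sup of an appropriate definable cut via axiom (3) --- but $a$ is not the sup of any definable cut, so axiom (3) is irrelevant here, and your substructures can never contain $a$. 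The back-and-forth therefore fails to cover $M_1'$.

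The paper repairs this by enlarging the class of partial isomorphisms: one carries pairs $(B,A)$ with $A=B\cap M$, $B=\dcl_{\CM'}(B)$, and the additional \emph{freeness} condition that any finite subset of $B$ independent over $A$ remains independent over $M$. The extension step then splits into three cases, the crucial new one being $b\notin\dcl_{\CM'}(BM)$ (Case III). There one must find $d\in N'$ realizing the right $\CL$-type over $D$ \emph{and} satisfying $d\notin\dcl_{\CN'}(DN)$, so that the extended pair stays free. This existence is nontrivial and is obtained (Lemma \ref{nontight}) by observing that $(\CN',\CN^*)$ is an honest o-minimal dense pair and applying van den Dries' result that no interval of $N'$ is covered by the image of $\bar N^n$ under an $\CN'$-definable map. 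That ingredient --- freeness plus the appeal to dense-pair theory for the generic case --- is exactly what your sketch is missing.
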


\subsection{The tight case}

Assume that $T$ is tight. As we saw in Proposition \ref{tight1},
the structure $\CM^*$ is interpretable in $\CM$ without
parameters. Using axiom (4) above we immediately conclude:

\begin{lemma}\label{pairs-tight}
Assume that $T$ is tight. \begin{enumerate} \item If
$(\CM',\CM)\models T^d$ then necessarily $\CM'=\CM^*$.

\item   For all $\CM, \CN\models T$,
    we have $(\CN^*, \CN)\equiv (\CM^*,\CM)$.

    \end{enumerate}
\end{lemma}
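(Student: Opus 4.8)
The plan is to prove both clauses of Lemma~\ref{pairs-tight} by leveraging the interpretability of $\CM^*$ in $\CM$ given by Proposition~\ref{tight1}, together with Corollary~\ref{equiv}. For clause (1): suppose $(\CM',\CM)\models T^d$ with $T$ tight. By axiom (4), every element of $M'$ realizes a definable cut in $\CM$. Since $T$ is tight, there are finitely many $\0$-definable functions $f_1,\dots,f_k$ with $f_i\colon M^{n_i}\to\bar M$ whose images cover $\bar M$ (Proposition~\ref{tight1}); the graphs of these $f_i$ are definable cuts uniformly in the $M^{n_i}$-parameter, and this is expressible in $T$, hence holds in $\CM$. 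Thus $M'$ is exactly the set of cuts realized by these finitely many definable families, which is precisely the interpretation of the universe of $\CM^*$ inside $\CM$. One then checks that the $\CL$-structure on $M'$ forced by axioms (1)--(3) (namely $\CM'\models T^*$, $M$ dense in $M'$, and compatibility of the order and $+$ with $\CM$) coincides with the interpreted structure $\CM^*$: the ordering on cuts is definable in $\CM$, the group operation extends uniquely by density, and each atomic predicate $\hat X$ of $\CL$ is interpreted as $\cl_{\bar M}(X)$, which by the construction in Proposition~\ref{tight1} is definable in $\CM$ from the data already fixed. Hence $\CM'=\CM^*$ as $\CL$-structures, which is clause (1).

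For clause (2): given $\CM,\CN\models T$, Corollary~\ref{equiv} gives $\CM^*\equiv\CN^*$ as $\CL$-structures (using that $\CL$ is the signature in which $\CM^*$ eliminates quantifiers, per the remarks preceding the lemma). We must upgrade this to $(\CM^*,\CM)\equiv(\CN^*,\CN)$ as $\CL^P$-structures. The key point is that, by clause (1) and the bi-interpretability remark following Proposition~\ref{tight1}, the pair $(\CM^*,\CM)$ is interpretable in $\CM$ without parameters, \emph{uniformly} in models of $T$: the interpreting formulas (defining the universe of $\CM^*$ as a quotient, the embedding of $M$ as the predicate $P$, the ordering, the group operation, and each $\hat X$) are fixed formulas of the language of $\CM$, independent of the particular model. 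Therefore every $\CL^P$-sentence $\psi$ about $(\CM^*,\CM)$ translates, via this fixed interpretation, into an $\CL_\CM$-sentence $\psi^\sharp$ such that $(\CM^*,\CM)\models\psi \iff \CM\models\psi^\sharp$, and likewise for $\CN$. Since $\CM\equiv\CN$ by hypothesis, $\CM\models\psi^\sharp \iff \CN\models\psi^\sharp$, and we conclude $(\CM^*,\CM)\equiv(\CN^*,\CN)$.

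The main obstacle I anticipate is making precise that the interpretation of $(\CM^*,\CM)$ in $\CM$ is genuinely uniform across models of $T$ — i.e., that tightness is not merely an existential statement ``there exist finitely many families'' but that a single finite list of $\0$-definable families can be fixed by the theory $T$. This requires observing that tightness is expressible in a way that pins down specific $\0$-definable formulas $\phi_1,\dots,\phi_k$ defining families of cuts, with an axiom scheme asserting every cut lies in one of them; once the formulas are fixed, the interpretation of $\CM^*$ (as quotient of a fixed $M^n$ by a fixed $\0$-definable equivalence relation, with fixed formulas for order, $+$, and each $\hat X$) is the same in every model. Since $T$ is complete and tight, such a finite list exists and can be absorbed into the presentation; the uniformity of the construction in Proposition~\ref{tight1} then does the rest. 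Apart from this bookkeeping, the argument is a routine transfer-via-interpretation, so I would keep the write-up short, citing Proposition~\ref{tight1}, the remark on bi-interpretability, and Corollary~\ref{equiv}.
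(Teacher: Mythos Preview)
Your approach is correct and essentially the same as the paper's: the paper offers no proof beyond the sentence ``Using axiom (4) above we immediately conclude,'' relying on Proposition~\ref{tight1} and the bi-interpretability remark, which is exactly what you unpack. Your explicit attention to the uniformity of the interpretation across models of $T$ (fixing a single finite list of $\0$-definable families of cuts, possible because $T$ is complete) is a detail the paper leaves implicit but which is indeed needed for clause~(2); your treatment of it is fine.
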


\subsection{The general case}

\begin{theorem}\label{pairs-main} If $\CM^d=(\CM',\CM)$ and $\CN^d=(\CN',\CN)$ are models of
$T^d$, then $\CM^d\equiv \CN^d$.\end{theorem}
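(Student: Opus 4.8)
The plan is to prove $\CM^d\equiv \CN^d$ via a back-and-forth argument on sufficiently saturated models, which is the standard method for pairs of o-minimal structures (cf. van den Dries, \cite{vdDriesDense}). Without loss of generality I may replace $\CM^d$ and $\CN^d$ by highly saturated and strongly homogeneous elementary extensions; note that passing to such an extension of the pair gives a pair of the same form (the component $\CM'$ remains a model of $T^*$, being o-minimal, $\CM$ remains a model of $T$ by Corollary \ref{equiv}, and density and the supremum axiom pass up). By Lemma \ref{claim1.2} and Proposition \ref{QE}, inside each pair the small predicate $\CM$ is an elementary substructure of $\bar \CM$-type behavior is controlled, and the o-minimal structure $\CM^*$ eliminates quantifiers. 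So the task reduces to building a back-and-forth system of partial isomorphisms between small substructures that respects both the $\CL$-structure and the predicate $P$.

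The key steps I would carry out are as follows. First, I would set up the family $\CI$ of partial isomorphisms $\phi: A\to B$ where $A\subseteq M'$ and $B\subseteq N'$ are small $\CL$-substructures (equivalently, $\CM^*$-elementary, by quantifier elimination) with $\phi$ an $\CL$-isomorphism carrying $A\cap M$ onto $B\cap N$, and such that $\phi\restriction (A\cap M)$ is elementary as a map between the weakly o-minimal structures $\CM$ and $\CN$ (this makes sense because, by Lemma \ref{early-dcl} and Theorem \ref{intersection}, the induced structure on $M$ inside $\CM^*$ is exactly $\CM$). I would check $\CI$ is nonempty: $\dcl$ of the empty set in each pair gives a canonical starting point, since $T$ and $T^*$ are complete and $T^*$ is determined by $T$. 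Then comes the extension step. Given $\phi\in\CI$ and $a\in M'$, I want to extend $\phi$ to include $a$. Write $a$'s place relative to $M$: either $a$ lies in the definable closure over $A$ of elements of $M$ — here the tight-case intuition of Lemma \ref{pairs-tight} and the supremum axiom (3) let me locate the witnessing cut, transfer it via $\phi$, and use density axiom (2) plus saturation of $\CN^d$ to find the matching cut and its supremum in $N'$ — or $a$ realizes a cut over $A\cup M$ that is not definable, in which case o-minimality of $\CM^*$ and $\CN^*$ together with saturation give a matching $b\in N'$ realizing the pushed-forward cut, and one checks no new elements of $P$ are created (this is where the Tarski--Vaught style argument, as in Lemma \ref{claim1.2}, re-enters: a generic cut over the o-minimal structure generated by $A$ and all of $M$ is not realized inside $N$ unless forced). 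Separately I must handle extending $\phi$ by an element $a\in M$: here I push forward the $\CM$-type of $a$ over $A\cap M$, realize it in $\CN$ by saturation, and then argue the resulting map is still an $\CL$-isomorphism on the larger substructure — using Theorem \ref{intersection} to control how closed predicates $\cl_{\bar M}(X)$ interact with $M$.

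The main obstacle I anticipate is the extension step for elements $a\in M'$: showing that when we realize the transferred cut in $N'$ we can do so \emph{without} accidentally landing in $N$ (when $a\notin M$) or, conversely, ensuring we \emph{can} land in $N$ (when $a\in M$, i.e.\ $a$ realizes a definable cut of $\CM$). The first direction is the familiar ``no new points of the predicate'' phenomenon and should follow from o-minimality of $\CM^*$ (a non-definable cut over the $\CL$-structure generated by $A$ and $M$ has a whole interval of realizations, all outside $N$ by Fact \ref{Facts}(3) applied in a saturated model) together with saturation; the second direction is exactly what axioms (2) and (3) are designed to guarantee, via Theorem \ref{intersection} identifying when $\dcl_{\CM^*_A}(\emptyset)$ meets $M$. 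A secondary subtlety is bookkeeping: one must extend $A$ and $B$ to genuine $\CL$-substructures closed under $+$ and containing all needed witnesses, and check at each stage that the ``$\phi\restriction P$ is $\CM$-elementary'' clause is preserved — this is where quantifier elimination for $\CM^*$ (Proposition \ref{QE}) and the explicit description of the induced structure on $M$ do the work, reducing everything to atomic formulas $\cl_{\bar M}(X)$ whose behavior on $M^n$ is governed by Theorem \ref{intersection}. Once the back-and-forth system is verified, completeness of $T^d$, i.e.\ $\CM^d\equiv\CN^d$, follows immediately.
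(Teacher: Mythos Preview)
Your back-and-forth system is missing a crucial ingredient: a \emph{freeness} (independence) condition on the substructures. In the paper's setup one works only with $\CL$-substructures $A\subseteq M'$ such that every finite subset of $A$ that is $\CM'$-independent over $A\cap M$ remains independent over all of $M$. Without this restriction the extension step by an element $a\in M$ breaks. After passing to $A'=\dcl_{\CM'}(Aa)$ you must check that $\phi'$ carries $A'\cap M$ onto $B'\cap N$. For elements of $\dcl_{\CM'}((A\cap M)a)$ this is indeed controlled by the $\CM$-type of $a$ (via Theorem~\ref{intersection}), as you suggest. But there may be elements $c\in A'\cap M$ lying in $\dcl_{\CM'}(Aa)\setminus\dcl_{\CM'}((A\cap M)a)$: such a $c$ witnesses a dependence among elements of $A$ over $M$ that was invisible over $A\cap M$, i.e.\ exactly a failure of freeness. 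Nothing in your invariant records this hidden dependence, so there is no reason $\phi'(c)$ should land in $N$. The paper's argument (Case~I of Lemma~\ref{iso-extend}) uses freeness precisely to rule out such $c$: if $c\in M$ and $c\in\dcl_{\CM'}(Aa)\setminus\dcl_{\CM'}((A\cap M)a)$, then by exchange some element of $A$ independent over $A\cap M$ becomes dependent over $(A\cap M)\cup\{a,c\}\subseteq M$, contradicting freeness. Your diagnosis of the main obstacle---whether a single realized cut lands in or out of $N$---is therefore misplaced: the real difficulty is controlling the \emph{entire} intersection $\dcl_{\CM'}(Aa)\cap M$, and that is what freeness buys.

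A secondary gap: in the extension by $a\in M$ you propose to push forward only the $\CM$-type of $a$ over $A\cap M$. That is not enough even to obtain an $\CL$-isomorphism on the enlarged domains; you must match the full cut of $a$ in $A$ (the $\CM'$-type over $A$) and then use density of $N$ in $N'$ to realize that cut inside $N$. Claim~\ref{types} shows the $\CM$-type over $A\cap M$ is determined by the cut in $\dcl_{\CM'}(A\cap M)$, but in the non-tight case one must allow $A\supsetneq\dcl_{\CM'}(A\cap M)$, and then the cut in $A$ is strictly more information.
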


\proof We may assume that $T$ is non-tight. We may assume that $\CM^d$ and $\CN^d$
are $\kappa$-saturated for sufficiently large $\kappa$.

Notice that every $\CM$-definable cut is realized in $\CM'$ exactly once, hence
there is a natural embedding of $\CM^*$ into $\CM'$, and the same holds for $\CN'$
and $\CN$. However, by saturation, unless $\CM$ is tight it is not the case that
$\CM'$ {\bf equals} $\CM^*$, since it realizes cuts which are not definable as well.
Our goal is to show that there are $(B,A)\prec (\CM',\CM)$ and $(D,C)\prec
(\CN',\CN)$ which are isomorphic.

Notice first that  both $M$ and $M'\setminus M$ are dense in $M'$,
for $i=1,2$. Indeed, this follows from the fact that $T$ is
non-valuational, so if $c\in \bar M\setminus M$ is any element
then $c+M\sub \bar M$ is dense in $\bar M$, so also in $M'$.

Since $\CM^*\models T^*$ and $\CM^*$ eliminates quantifiers, the
pair $(\CM',\CM^*)$ is an elementary dense pair of o-minimal
structures, so we shall apply to it the theory of dense pairs as
in \cite{vdDriesDense}.

We first need:
\begin{lemma}\label{dcl}
Let $(\CM',\CM)\models T^d$. Let $M_0\prec \CM$. Then $\dcl_{\CM_0^*}(M_0)=\bar \CM_0$. Moreover,  $\dcl_{\CM'}(M_0)=\bar M_0$.
\end{lemma}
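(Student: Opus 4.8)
The plan is to prove the two equalities in turn, the first being essentially a parameter‑tracking refinement of the proof of Lemma \ref{early-dcl}, and the second reducing to the first once we know that $\CM_0^*$ sits elementarily inside $\CM'$.

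For the first equality, the inclusion $\dcl_{\CM_0^*}(M_0)\sub \bar M_0$ is immediate, since $\bar M_0$ is by construction the universe of $\CM_0^*$. For the reverse inclusion, take any $a\in\bar M_0$. By definition $a$ is the supremum of a cut $C_a$ of $M_0$ that is definable in $\CM_0:=M_0$ (which, being an elementary submodel of $\CM$, is itself weakly o-minimal non-valuational), hence definable over some finite tuple $\bar c$ from $M_0$. Running the argument of the last paragraph of the proof of Lemma \ref{early-dcl} with $M_0$ in place of $\CM$: the iterative convex hull $\overline{C_a}\sub\bar M_0$ is $\bar c$-definable in $(\CM_0)_{\bar c}$, hence, by Proposition \ref{observation}(2), $\bar c$-definable in $\CM_0^*$; since $a=\sup \overline{C_a}$ and $\CM_0^*$ is an o-minimal expansion of an ordered group, $a\in\dcl_{\CM_0^*}(\bar c)\sub\dcl_{\CM_0^*}(M_0)$. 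Thus $\dcl_{\CM_0^*}(M_0)=\bar M_0$.

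For the ``moreover'' part I would first observe that $\CM_0^*\prec\CM'$. Indeed, from $M_0\prec\CM$ and Lemma \ref{claim1.2} (with empty parameter set) we get $\CM_0^*\prec\CM^*$; and $\CM^*\prec\CM'$ is exactly what was recorded in the discussion preceding this lemma, where $(\CM',\CM^*)$ is observed to be an elementary dense pair of o-minimal structures. Composing, $\CM_0^*\prec\CM'$, so its universe $\bar M_0$ is an elementary substructure of $\CM'$ and in particular closed under $\dcl_{\CM'}$. This yields $\dcl_{\CM'}(M_0)\sub\dcl_{\CM'}(\bar M_0)=\bar M_0$, while the reverse inclusion $\bar M_0=\dcl_{\CM_0^*}(M_0)\sub\dcl_{\CM'}(M_0)$ follows from the first part together with the fact that $\CM_0^*$ is a substructure of a reduct of $\CM'$. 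Hence $\dcl_{\CM'}(M_0)=\bar M_0$.

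The only delicate point is the bookkeeping in the second paragraph: one must make sure the cut defining a given $a\in\bar M_0$ can be taken over parameters lying inside $M_0$ — which is precisely the meaning of ``$a\in\bar M_0$'' — and that Proposition \ref{observation}(2) is indeed applicable to the weakly o-minimal structure $M_0$, which it is since $M_0\prec\CM$. Everything else is a formal consequence of elementarity facts already established in Sections \ref{prelims} and \ref{pairs}.
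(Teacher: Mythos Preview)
Your proof is correct and follows essentially the same route as the paper: both identify each $a\in\bar M_0$ as the supremum of an $\CM_0^*$-definable set over parameters from $M_0$, and both derive the ``moreover'' clause from $\CM_0^*\prec\CM'$. Your argument is in fact a bit leaner---you rightly note that $\dcl_{\CM_0^*}(M_0)\subseteq\bar M_0$ is immediate from the definition of the universe (the paper devotes a paragraph to this direction), and you bypass the paper's auxiliary Claim about definable functions $M_0^n\to\bar M_0$ by working directly with the $1$-cell $C_a$.
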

\proof 
It will suffice to prove the first part of the lemma as the second part follows from the fact that $\CM_0^*\prec \CM'$.


 First we show the right-to-left inclusion. For that we need:

\begin{claim} Assume that $f:M_0^n\to \bar M_0$ is a $\0$-definable function in $\CM_0$.
Then there are in $\CM_0$ finitely many $\0$-definable strong cells
of the form $C_1,\ldots, C_k\sub M_0^n$, with $M_0^n \sub \bigcup_i
\bar C_i$, and in $\CM_0^*$ there are finitely many $\0$-definable
functions $\bar f_i: \bar C_i\to \bar M_0$, such that for all $x\in
C_i$, $\bar f_i(x)=f(x)$.
\end{claim}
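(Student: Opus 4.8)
The plan is to establish the claim by a strong-cell-decomposition argument, leveraging the fact (Fact \ref{Facts}) that every $\0$-definable set in $\CM_0$ decomposes into $\0$-definable strong cells, together with the translation between $\bar \CM_0$ and $\CM_0^*$ from Proposition \ref{same language} and the finer quantifier-elimination and parameter-control results of Proposition \ref{QE} and Theorem \ref{intersection}. First I would apply strong cell decomposition to the graph $\Gamma_f \sub M_0^{n+1}$ of $f$, obtaining finitely many $\0$-definable strong cells whose union is $\Gamma_f$; projecting these onto the first $n$ coordinates yields $\0$-definable strong cells $C_1,\dots,C_k \sub M_0^n$ whose union is $M_0^n$, and on each $C_i$ the function $f$ agrees with a strongly continuous $\0$-definable function $f_i : C_i \to \bar M_0$ (this is essentially the content of Fact \ref{Facts}(1) applied to the graph, since each cell in the decomposition of $\Gamma_f$ is, being a graph over its projection, of the form $\Gamma_{f_i}(C_i)$). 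Then I would invoke Lemma \ref{fibres} (or directly the inductive construction in Proposition \ref{same language}) to pass from the strongly continuous $f_i$ on $C_i$ to its continuous extension $\bar f_i : \bar C_i \to \bar M_0$, and note that $\bar f_i$ is $\0$-definable in $\CM_0^*$: indeed $\Gamma_{\bar f_i}$ is $\bar C_i$ (which is $\0$-definable in $\CM_0^*$ by the induction in Proposition \ref{same language}) intersected with the closure $\cl_{\bar M_0}(\Gamma_{f_i})$, which is atomic in $\CM_0^*$.

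The only genuine subtlety — and the step I expect to be the main obstacle — is ensuring that the various objects produced ($C_i$, $f_i$, $\bar f_i$) remain \emph{over $\0$}, rather than picking up parameters from $M_0$ along the way. For the strong cells $C_i$ and the functions $f_i$ this is immediate, since strong cell decomposition of a $\0$-definable set produces $\0$-definable cells and $\0$-definable boundary functions (Fact \ref{Facts}(1)). For the extensions $\bar f_i$, one checks that $\bar f_i$ is obtained from $f_i$ by a $\0$-definable procedure in $\CM_0^*$ — taking closures and intersecting with $\bar C_i$ — exactly as in the inductive step of Proposition \ref{same language}, so no new parameters enter; alternatively, one observes that $\bar f_i$ is $\0$-definable in $\CM_0^*$ because $f_i$ is $\0$-definable in $\CM_0$ and applies the equivalence of definable sets between $\bar\CM_0$ and $\CM_0^*$. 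The requirement $M_0^n \sub \bigcup_i \bar C_i$ follows because $\bigcup_i C_i = M_0^n$ and $C_i \sub \bar C_i$ for each $i$. Finally, the agreement $\bar f_i(x) = f(x)$ for $x \in C_i$ is by construction, since $\bar f_i$ extends $f_i = f\restriction C_i$ and $C_i \sub \bar C_i$; this closes the proof of the claim, after which the right-to-left inclusion $\bar\CM_0 \sub \dcl_{\CM_0^*}(M_0)$ follows by noting that any element of $\bar M_0$ is of the form $f(\bar m)$ for such an $f$ and $\bar m \in M_0^n$, hence equals $\bar f_i(\bar m)$ for the appropriate $i$, which is in $\dcl_{\CM_0^*}(M_0)$.
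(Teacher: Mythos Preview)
Your approach is essentially the paper's: decompose into $\0$-definable strong cells on which $f$ is strongly continuous, then argue that each continuous extension $\bar f_i:\bar C_i\to\bar M_0$ is $\0$-definable in $\CM_0^*$ (the paper phrases this last step by saying that $\Gamma(\bar f_i)$ is the iterative convex hull of $\Gamma(f\upharpoonright C_i)$).

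One slip worth fixing: you write $\Gamma_f \sub M_0^{n+1}$ and propose to apply strong cell decomposition to it, but since $f$ may take values in $\bar M_0\setminus M_0$, the graph need not lie in $M_0^{n+1}$ at all, so strong cell decomposition (which applies to definable subsets of $M_0^k$) is not available for $\Gamma_f$ as stated; likewise your formula $\Gamma_{\bar f_i}=\bar C_i\cap \cl_{\bar M_0}(\Gamma_{f_i})$ uses a closure that is not an atomic predicate of $\CM_0^*$ when $f_i(C_i)\sub\bar M_0\setminus M_0$. The paper sidesteps this by decomposing the \emph{domain} $M_0^n$ compatibly with $f$, and your own pointer to the inductive construction in Proposition~\ref{same language} already contains the correct recipe for the irrational-valued case (recover $\bar f_i$ as the fibrewise supremum of $\cl_{\bar M_0}\{(x,y)\in M_0^{n+1}: y<f_i(x)\}$). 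Once you adjust the opening move accordingly, the argument is complete and coincides with the paper's.
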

\proof We decompose $M_0^n$ into $\0$-definable strong cells,
$C_1,\ldots, C_k$, on each of which $f$ is strongly continuous.
For each $i$, the graph of $\bar f\upharpoonright \bar C_i$ is the
iterative convex hull of $\Gamma(f\upharpoonright C_i)$, so it is
$\0$-definable in $\CM^*_0$.\qed

Assume now that  $b\in \bar M_0$, then by definition of the completion, the cut
$Y=\{x\in M_0:x<b\}$ is definable in $\CM_0$, over a tuple of parameters $a$. We may
assume that $Y=Y_a$ for a $\0$-definable family of sets $\{Y_t:t\in T\}$  and
$\0$-definable set $T\sub M_0^m$, and that we have $b=\sup\, Y_a$. It follows that
there is in $\CM_0$ a $\0$-definable function $f:T\to \bar M_0$, such that $f(a)=b$.

By the above claim, we have $T=\bigcup C_i$ a union of
$\0$-definable strong cells in $\CM$, and there are $f_i:\bar C_i\to \bar M_0$ all $\0$-definable
 in $\CM^*_0$, , such that
\begin{equation}\label{eq:1} \bigwedge_{i=1}^k \forall x\in C_i \,\, \bar f(x)=f(x).\end{equation}  In particular, there
is $i\in \{1,\ldots, k\}$ such that $a\in C_i$ and $b=\bar f_i(a)$ is in
$\dcl_{\CM^*_0}(M_0)$. Thus, $\bar M_0\sub \dcl_{\CM^*_0}(M_0)$.

For the converse, we assume that $g(a)=b$ for some $\0$-definable
function $g$ in $\CM^*_0$ and $a\in M^m_0$. We want to show that $b\in
\bar M_0$, namely that $b$ is the supremum of a definable cut in the
structure $\CM_0$.

 The function $g$ is $\0$-definable in the o-minimal structure
$\CM^*_0$, so by Theorem \ref{intersection}, the set
$$Y=\{(x,y)\in M^{n+1}_0: y<g(x)\}$$ is $\0$-definable in $\CM_0$ and we have
\begin{equation} \label{eq:2} \forall x\in M_0^n\, \, g(x)= \sup(Y_x).\end{equation}
 It follows that $b=g(a)=\sup Y_a$, with $Y\sub M^{n+1}_0$ a
$\0$-definable set in $\CM_0$. Hence, $b\in \bar M_0$.\qed

We will also need:
\begin{claim} \label{types} For $A\sub M$ and $a\in M$, the $\CM$-type of $a$ over $A$
is determined by the cut of $a$ in $\dcl_{\CM'}(A)$.\end{claim}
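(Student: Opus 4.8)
\proof
The plan is to reduce any $A$-definable subset of $M$ in $\CM$ to a finite union of strong cells, whose iterative convex hulls are $A$-definable in $\CM^*_\0$ and hence have endpoints in $\dcl_{\CM^*_\0}(A)=\dcl_{\CM'}(A)$; membership of an element $a\in M$ in such a set is then decided purely by the order relations between $a$ and $\dcl_{\CM'}(A)$, i.e.\ by the cut of $a$ over $\dcl_{\CM'}(A)$.

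I would first observe that $\dcl_{\CM'}(A)=\dcl_{\CM^*_\0}(A)$. Since $\CM^*_\0$ eliminates quantifiers (Proposition \ref{QE}) and $\CM^*_\0\equiv\CM'$ (axiom (1) of $T^d$), the natural embedding $\CM^*_\0\hookrightarrow\CM'$ is elementary, so, identifying $\bar M$ with its image, $\bar M$ is definably closed in $\CM'$; as $A\sub M\sub\bar M$ this gives the equality. Write $D$ for this common set.

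Next, for an arbitrary $A$-definable $Y\sub M$ in $\CM$, strong cell decomposition (Fact \ref{Facts}(1)) lets me write $Y=\bigcup_{i=1}^k C_i$ with each $C_i$ a strong cell over $A$. Each $C_i$ is one-dimensional, hence a point or an open convex subset of $M$, and $C_i=\bar C_i\cap M$, where $\bar C_i\sub\bar M$ is the iterative convex hull: for a point this is trivial, and for an open convex cell $\bar C_i$ is the open interval with the same infimum and supremum in $\bar M$, so the equality follows from density of $M$ in $\bar M$. By the proof of Proposition \ref{same language}, $\bar C_i$ is $\0$-definable in $\CM^*_A$, hence --- and this is the crucial input --- $A$-definable in $\CM^*_\0$ by Proposition \ref{QE}(2). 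Consequently $\bar C_i$ is either a singleton $\{p_i\}$ with $p_i\in D$, or an open interval $(l_i,r_i)$ with $l_i,r_i\in D\cup\{\pm\infty\}$.

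It then follows that, for $a\in M$, we have $a\in Y$ iff $a\in\bigcup_i\bar C_i$, i.e.\ iff for some $i$ either $a=p_i$ or $l_i<a<r_i$; each of these conditions is determined by which elements of $D$ lie below, equal to, or above $a$ --- that is, by the cut of $a$ in $D=\dcl_{\CM'}(A)$. Since $Y$ was an arbitrary $A$-definable subset of $M$, this cut determines $\tp_\CM(a/A)$. The one genuinely delicate step is the passage from ``$\bar C_i$ is $\0$-definable in $\CM^*_A$'' to ``$\bar C_i$ is $A$-definable in $\CM^*_\0$'', which is what places the endpoints inside $\dcl_{\CM'}(A)$ and not in some larger definable closure computed with the extra atomic relations of $\CM^*_A$; this is exactly Proposition \ref{QE}(2), and the rest is bookkeeping.\qed
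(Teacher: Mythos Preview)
Your proof is correct and follows essentially the same approach as the paper. Both arguments place the endpoints of the convex pieces of an $A$-definable subset of $M$ inside $\dcl_{\CM'}(A)$ via Proposition~\ref{QE}(2); the paper reduces to $A$-definable cuts directly by weak o-minimality (handling the case $\sup C\in M$ separately through Lemma~\ref{early-dcl}) while you go through strong cell decomposition uniformly, but the key input is identical.
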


\proof Assume that $a$ and $b$ in $M$ realize the same cut over $\dcl_{\CM'}(A)$. To
see that $a$ and $b$ realize the same $\CM$-type over $A$, it is sufficient, by the
weak o-minimality of $\CM$,  to show, for every cut $C\sub M$ definable in $\CM$
over $A$, that $a\in C$ iff $b \in C$ . Using our assumptions, it is enough to prove
that the supremum of $C$ exists in $M'$ and belongs to $\dcl_{\CM'}(A)$.

 If
$C$ has a supremum $s$ in $M$ then $s\in \dcl_{\CM}(A)\cap M$, and
therefore (Lemma \ref{early-dcl}) $s\in \dcl_{\CM^*}(A)$. Since
$\CM^*$ is an elementary substructure of $\CM'$ we have $s\in
\dcl_{\CM'}(A)$.

 If $C$ has no supremum in $M$ then, by definition, its supremum is realized in
$\bar M$. As $C$ is definable in $\CM$ over $A$, its closure in $\bar M$ is
$\0$-definable in $\CM^*_A$, so by \ref{QE}(2) it  is definable in $\CM_\0^*$ over
$A$. But then $\sup C\in \dcl_{\CM^*}(A)=\dcl_{\CM'}(A)$. This finishes the
proof.\qed

The rest of the proof follows closely the arguments from  \cite{vdDriesDense}. In
order to proceed we borrow the following terminology:

\begin{definition}
    For $B\sub M'$ and $A=B\cap M$, we say that $(B,A)$ is {\em free}
if $\dim_{\CM'}(B'/A)=\dim_{\CM'}(B'/M)$ for every finite $B'\sub B$. Namely, every
subset of $B$ which is $\CM'$-independent over $A$ remains independent over $M$. We make the same definitions for subsets of $N'$ and $N$.
\end{definition}

We consider all $(B,A)\sub (M',M)$ (and similarly $(D,C)$ in $(N',N)$) which
satisfy:

\noindent (i) $B\cap M=A$.

\noindent (ii) $\dcl_{\CM'}(B)=B$.

\noindent (iii) $(B,A)$ is free.

We now begin the construction of the intended isomorphism.  By saturation, there is
$\CM_0\prec \CM$, of cardinality smaller than $\kappa$ that is isomorphic to some $\CN_0\prec
\CN$.

If we let $A_0:=M_0$ $B_0:=\bar A_0$ and $C_0=:=\bar N_0$, $D_0:=\bar C_0$. Then (i) holds. By Lemma \ref{dcl}
$\dim_{\CM'}(B_0/A_0)=0$, so $(B_0,A_0)$ is (trivially) free. Also, by this lemma,
$B_0$ is definably closed in $\CM'$, so $(B_0,A_0)$ satisfy (i),(ii),(iii).
Similarly, $(D_0,C_0)$ satisfies (i),(ii),(iii).

Our goal is to use back-and-forth and Tarski-Vaught in order to build isomorphic
elementary substructures of $(\CM',\CM)$ and $(\CN',\CN)$. Towards that goal we need
to prove the following result:

\begin{lemma}\label{iso-extend} Assume that $(B,A)\sub (\CM',\CM)$ and $(D,C)\sub (\CN',\CN)$ satisfy (i),(ii),(iii), and isomorphic (namely, there
is an $\CL$-isomorphism $\alpha:B\to D$ sending $A$ onto $C$), with $|A|<\kappa$.
Then, for every $b\in M'$, there are $B'\sub M', A'\sub M$ with $b\in B'$, and there
are  $D'\sub N', C'\sub N$, such that  $(B',A')\, ,\, (D',C')$ satisfy
(i),(ii),(iii), and there is an isomorphism $\alpha':(B',A')\to (D',C')$ extending
$\alpha$.
\end{lemma}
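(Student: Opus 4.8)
The plan is to mimic the extension step of van den Dries' back-and-forth for dense pairs, treating separately the two ways a new element $b\in M'$ can sit over $B$. First I would reduce to the case $b\notin B$ (so in particular $b\notin A$). Since $\CM'$ is o-minimal and $B=\dcl_{\CM'}(B)$, either $b$ is $\CM'$-algebraic over $M$ or it is not; and using freeness of $(B,A)$ the dichotomy can be pushed down: either $b\in\dcl_{\CM'}(M)$ (equivalently, by Lemma \ref{dcl}, $b\in\bar M_0$ for a small $M_0\prec\CM$ capturing the relevant parameters, so $b$ realizes an $\CM$-definable cut), or $b$ is $\CM'$-independent from $M$ over $B$, i.e.\ $b$ is ``generic'' over $M$ in the sense of the dense pair. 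These are exactly the two cases handled in \cite{vdDriesDense}, and I would follow that structure.

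In the \emph{generic} case, the type $\tp_{\CM'}(b/B)$ is the unique type over $B$ that is not realized and not ``below/above'' any dcl element in a definable cut — concretely it extends to the unique $\CM'$-type over $M$ that is not realized in $M$ and not contained in any $\CM'$-definable-over-$M$ proper cut; by density of both $M$ and $M'\setminus M$ in $M'$ (noted in the proof of Theorem \ref{pairs-main}) and $\kappa$-saturation, such a $b$ exists over any small set, and in particular we can choose $d\in N'$ realizing the $\alpha$-image of this type over $D$ with $d$ generic over $N$ as well. I then set $A'=A$, $B'=\dcl_{\CM'}(Bb)$, and correspondingly $C'=C$, $D'=\dcl_{\CN'}(Dd)$. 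Freeness of $(B',A')$ follows because adding a single element generic over $M$ cannot create new dependences over $A$ that fail over $M$; and $B'\cap M=A$ still holds because anything in $\dcl_{\CM'}(Bb)\cap M$ would, by exchange and the genericity of $b$, already lie in $\dcl_{\CM'}(B)\cap M=A$. The map $\alpha'$ is the unique $\CL$-isomorphism extending $\alpha$ and sending $b\mapsto d$, which exists because $\CM^*\equiv\CN^*$ via the induced isomorphism on the definable closures and the types of $b,d$ match.

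In the \emph{algebraic} (over $M$) case, $b\in\bar M_0$ for a suitable small $M_0\prec\CM$, so $b=\sup Y_a$ for an $\CM$-definable cut with parameters $a$ from a small substructure of $\CM$. Here the new content lies in $M$, not $M'$: I would first enlarge $A$ to a small $A'=M_1\prec\CM$ containing $a$ (and the old $A$), using $\kappa$-saturation to find $C'=N_1\prec\CN$ with an extension of $\alpha$ to $M_1\to N_1$ — this is where Claim \ref{types} is used, since it guarantees that extending the isomorphism on $M$-points is controlled by cuts in $\dcl_{\CM'}$, hence the back-and-forth on $\CM$ itself goes through. Then set $B'=\dcl_{\CM'}(B\cup M_1)=\bar M_1 \cup (\text{old } B\text{-part})$, more precisely $B'=\dcl_{\CM'}(BM_1)$, and note by Lemma \ref{dcl} that $\dcl_{\CM'}(M_1)=\bar M_1$, so $B'$ lies in the ``small canonical completion'' and $\dim_{\CM'}(B'/A')=\dim_{\CM'}(B'/M)$ is forced (the new elements are all algebraic over $M$, indeed over $A'$), giving freeness; also $B'\cap M=M_1=A'$ because an element of $\bar M_1$ in $M$ is in $\dcl_{\CM}(M_1)\cap M=M_1$ by Lemma \ref{early-dcl}. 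The isomorphism $\alpha'$ extends $\alpha$ by functoriality of taking definable closures in $\CM'$ versus $\CN'$ together with $\CM^*\equiv\CN^*$ over the matched parameters.

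The main obstacle I expect is the \emph{algebraic} case: one must simultaneously extend the isomorphism \emph{inside the weakly o-minimal reducts} $\CM,\CN$ (not just their o-minimal completions) so that it remains an $\CL$-isomorphism on the $P$-parts, and verify that this extension is compatible with the already-built isomorphism on $B$. This is precisely what Claim \ref{types} is designed to handle — it says the $\CM$-type of an $M$-point over a parameter set is pinned down by its cut in $\dcl_{\CM'}$ — so the strategy is to invoke it to show that any cut-preserving bijection on the relevant $M$-points automatically preserves all $\CL$-structure, reducing the $\CM$-side back-and-forth to the o-minimal (cut) back-and-forth that \cite{vdDriesDense} already supplies. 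The remaining bookkeeping (that $(B',A')$ satisfies (i)--(iii) and that $\alpha'$ is well defined) is routine, following van den Dries essentially verbatim.
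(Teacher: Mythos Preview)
Your case split is not exhaustive. You claim that freeness of $(B,A)$ collapses the dichotomy to ``$b\in\dcl_{\CM'}(M)$'' versus ``$b\notin\dcl_{\CM'}(BM)$''; it does not. If $y_0\in B$ is independent over $M$ (such elements exist as soon as $B\supsetneq\bar A$) and $m\in M\setminus\dcl_{\CM'}(B)$, then $b=y_0+m$ lies in $\dcl_{\CM'}(BM)\setminus\dcl_{\CM'}(M)$ and need not lie in $B$. Freeness is a statement about independence of tuples \emph{from} $B$; it says nothing about an arbitrary new $b$. The paper uses a three-case split: Case~I treats $b\in M$ (the core, where one checks via freeness that any element of $\dcl_{\CM'}(Bb)\setminus\dcl_{\CM'}(Ab)$ cannot lie in $M$, so $\alpha'$ preserves $P$); Case~II treats $b\in\dcl_{\CM'}(BM)$ by first applying Case~I to the finitely many $M$-parameters needed, after which $b\in B$; Case~III is the generic case $b\notin\dcl_{\CM'}(BM)$. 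Your ``algebraic'' case only covers $b\in\dcl_{\CM'}(M)$, and even there your proposal to enlarge $A$ to some $M_1\prec\CM$ in one step still owes the Case~I computation to verify $\dcl_{\CM'}(BM_1)\cap M=M_1$ and that the extended map preserves $P$ on all of $\dcl_{\CM'}(BM_1)$, not just on $\bar M_1$.

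There is a second gap in the generic case. To find $d\in N'$ realizing $\alpha(\tp_{\CM'}(b/B))$ with $d\notin\dcl_{\CN'}(DN)$, density of $N'\setminus N$ together with saturation is not enough: one needs that no interval of $N'$ is contained in $\dcl_{\CN'}(DN)$, and since $\dcl_{\CN'}(N)=\bar N\supsetneq N$ by Lemma~\ref{dcl}, density of the complement of $N$ tells you nothing about the complement of $\dcl_{\CN'}(DN)$. The paper proves this as a separate Lemma~\ref{nontight}, by passing to the o-minimal dense pair $(\CN',\bar\CN)$ --- note that $(\CN',\CN)$ itself is \emph{not} a dense pair of o-minimal structures in van den Dries' sense --- and invoking \cite[Lemma~4.1]{vdDriesDense}. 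This is precisely where the non-tightness assumption enters.
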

(We also have the analogous result for $(D,C)$ and $d\in N'$.)

\proof We divide the argument into several cases:
\\

\noindent {\bf Case I.} $b\in M$. \vspace{.3cm}

First, we find $d\in N$ such that $\alpha(\tp_{\CM'}(b/B))=\tp_{\CN'}(d/D)$ (so by
Lemma \ref{types}, also $\alpha(\tp_{\CM}(b/A))=\tp_{\CN}(d/C)$). Indeed, this is
possible because $N$ is dense in $N'$ and $\CN'$ is $\kappa$-saturated. The function
$\alpha$ then extends naturally to an isomorphism $\alpha'$ of the o-minimal
structures $B':=\dcl_{\CM'}(Bb)$ and $D':=\dcl_{\CN'}(Dd)$. We let
$A'=B'\cap M$ and $C'=D'\cap N$. In order to see that $\alpha'$
is an isomorphism of $(B',A')$ and $(D',C')$ it is left to verify is that for every
$a\in B'$,

\begin{equation}\label{eq:iso} a\in M \Leftrightarrow \alpha'(a)\in N.\end{equation}

So, we take $a\in \dcl_{\CM'}(Bb)$ and prove (\ref{eq:iso}).

Assume first that $a\in \dcl_{\CM'}(Ab)$. By Lemma \ref{dcl}, $a\in \bar M$, so we
have $a\in \dcl_{\CM^*}(Bb)$. Hence, there exists a $\0$-definable function $F$ of
$(n+1)$-variables  in $\CM^*$, and $e\in (\bar M)^n$, with $F(b,e)=a$. The function
$F$ is definable in $\CM^*$, and, by \ref{intersection},  its restriction to
$M^{n+1}$ is $\0$-definable in $\CM$ (as a function into $\bar M$). Thus, we can
definably in $\CM$ partition its domain into $\0$-definable strong cells on each of
which $F$ takes either values in $M$ or in $\bar M\setminus M$. This partition is
part of the weakly o-minimal theory $T$, and thus holds in both $\CM$ and $\CN$.
Since $\alpha(\tp_{\CM}(b/A))=tp_{\CN}(d/C)$ it follows that $a=F(b,e)\in M$ if and
only if $\alpha'(a)=F(d,\alpha(e))\in N$.

Assume now that $a\in \dcl_{\CM'}(Bb)\setminus \dcl_{\CM'}(Ab)$ (so $\alpha'(a)\in
\dcl_{\CN'}(Dd)\setminus \dcl_{\CN'}(Cd)$). We claim that $a\notin M$ and
$\alpha'(a)\notin N$.

Indeed, assume towards a contradiction that $a\in M$, and let $Y\sub B$ be a minimal
finite set which is $\dcl_{\CM'}$-independent over $Ab$ such that $a\in
\dcl_{\CM'}(YAb)$. Because $a\notin \dcl_{\CM'}(Ab)$ the set $Y$ is nonempty so fix
$y_0\in Y$. We have $a\in \dcl_{\CM'}(Y'y_0Ab)$, with $Y'=Y\setminus \{y_0\}$, so by
exchange (and minimality of $Y'$), $y_0\in \dcl_{\CM'}(Y'Aba)$. Because $a,b\in M$
and $A\sub M$, it follows that $Y$ is not independent over $M$, even though it is
independent over $A$. This contradicts the fact that $(B,A)$ was free, so $a\notin M$. The same argument shows that $\alpha'(a)\notin N$.

Thus, we showed that $\alpha':(B',A')\to (D',C')$ is an isomorphism. It is clear, that the pairs satisfy (i) and (ii), so we are left to see that they are free.
So, we take $Y\sub B'$ independent over $A'$ and claim that it remains independent
over $M$. Indeed, because $b\in A'$ (since $b\in M$), it must be the case that
$Y\sub B$, and the result follows immediately from the freeness of $(B,A)$ (because
$A\sub A'$). This ends Case I. \vspace{.3cm}

\noindent{\bf Case II.} $b\in \dcl_{\CM'}(BM)$. \vspace{.3cm}

In this case, there is $\bar m=(m_1,\ldots, m_k)\in M^k$ such that $b\in
\dcl_{\CM'}(B\bar m)$. We first apply Case I to each $m_i$, and thus may assume that
$\bar m\sub B$, and in particular may assume that $b$ is already in
$B$.\vspace{.3cm}

\noindent {\bf Case III.} $b\notin \dcl_{\CM'}(BM)$\vspace{.3cm}

Notice first that in this case $\CM$ (and hence also $\CN$)  is not tight (since in
the tight case $M'=\bar M=\dcl_{\CM'}(M)$). We let $B'=\dcl_{\CM'}(Bb)$ and
$A'=B'\cap M$. Our goal is to show that $(B',A')$ satisfies (i),(ii),(iii), so we
need to show that it is free.

We first claim that $A'=A$. Indeed, if $a\in \dcl_{\CM'}(Bb)\cap M$ then either $a\in
\dcl_{\CM'}(B)$, so $a\in A$, or if not then by exchange, $b\in \dcl_{\CM'}(Ba)$,
contradicting the assumption on $b$.

Assume now that $Y\sub B'$ is independent over $A'=A$. If $Y\sub B$ then $Y$ is
independent over $M$, and otherwise, we may assume that it is of the form $Y'b$ with
$Y'\sub B$. By freeness of $(B,A)$ we have $Y'$ independent over $M$ and by
assumption on $b$ we may conclude that  $Y'b$ independent over $M$. Thus, $(B',A')$
is indeed free.

Next, we claim that we may find in $\CN'$ an element $d$ such that
$\alpha(\tp_{\CM'}(b/B))=\tp_{\CN'}(d/D)$  and in addition $d\notin
\dcl_{\CN'}{(DN)}$. It is here that we use the fact that $\CN$ is non-tight. We
prove:

\begin{lemma}\label{nontight} Let $D\sub N'$ be of cardinality smaller than $\kappa$.
Then for every $\CM'$-type $p(x)$ over $B$, there is a realization
of $\alpha(p)$ which is not in $\dcl_{\CN'}(DN)$.
\end{lemma}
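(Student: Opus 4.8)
Before the author's argument, here is the line of attack I would follow. First observe that Lemma~\ref{nontight} is invoked in Case~III, where $b\notin\dcl_{\CM'}(BM)\supseteq B$ forces $p=\tp_{\CM'}(b/B)$ to be non-algebraic; since $\alpha:B\to D$ is an $\CL$-isomorphism and $\CM'\equiv\CN'\models T^*$, the image $\alpha(p)$ is a consistent non-algebraic $\CN'$-type over $D$. As $\CN'$ is $\kappa$-saturated and $|D|<\kappa$, its set $I$ of realizations in $N'$ is convex and has more than one point (a singleton would be algebraic over $D$), hence $I$ contains a nonempty open interval $(a,b)$ of $N'$. Thus it suffices to show that $\dcl_{\CN'}(DN)$ has empty interior in $N'$: then $(a,b)\not\subseteq\dcl_{\CN'}(DN)$, and any $d\in(a,b)\setminus\dcl_{\CN'}(DN)$ realizes $\alpha(p)=\alpha(\tp_{\CM'}(b/B))$ while lying outside $\dcl_{\CN'}(DN)$.

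Next I would pin down the parameter set. Since $(\CN',\CN^*)$ is a dense pair of o-minimal structures, the smaller model is definably closed in the larger, so $\dcl_{\CN'}(\bar N)=\bar N$; and since each $\CN$-definable cut of $N$ has its supremum in $\dcl_{\CN^*}$ of the parameters defining it (this is built into the atomic relations of $\CN^*$; compare the proof of Lemma~\ref{dcl}), we get $\bar N=\dcl_{\CN^*}(N)=\dcl_{\CN'}(N)$. Hence $\dcl_{\CN'}(DN)=\dcl_{\CN'}(D\cup\bar N)$, and $\CN'$-independence over $N$ is the same as $\CN'$-independence over $\bar N$. Because $(D,C)$ is free, $\dim_{\CN'}(D_0/\bar N)=\dim_{\CN'}(D_0/C)$ for every finite $D_0\subseteq D$, so a maximal $D'\subseteq D$ that is $\dcl_{\CN'}$-independent over $\bar N$ satisfies $|D'|=\dim_{\CN'}(D/C)\leq|D|<\kappa$ and $\dcl_{\CN'}(D\cup\bar N)=\dcl_{\CN'}(\bar N\cup D')$. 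So the target becomes: $\dcl_{\CN'}(\bar N\cup D')$ has empty interior in $N'$.

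This is where non-tightness enters. Since $T$ is non-tight and $\CN^d$ is sufficiently saturated, $\bar N$ is a proper subset of $N'$ -- hence, being a dense subgroup, co-dense in $N'$ -- and $\dim_{\CN'}(N'/\bar N)>|D'|$, so $\dcl_{\CN'}(\bar N\cup D')$ does not span $N'$ over $\bar N$: $\dcl_{\CN'}(\bar N\cup D')\subsetneq N'$. I would then appeal to van den Dries' structure theory for the dense pair $(\CN',\CN^*)$ (\cite{vdDriesDense}): a subset of $N'$ which is ``small'' over $\bar N$ in his sense -- in particular $\dcl_{\CN'}(\bar N\cup F)$ for any $F$ of finite $\CN'$-dimension over $\bar N$ -- has empty interior in $N'$; writing $\dcl_{\CN'}(\bar N\cup D')$ as the increasing union of the $\dcl_{\CN'}(\bar N\cup F)$ over finite $F\subseteq D'$, and using that this union is a proper subset of $N'$, one concludes that it too has empty interior. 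Feeding this back into the first paragraph produces the required $d$.

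The hard part will be the last structural step: passing from ``$\dcl_{\CN'}(\bar N\cup D')$ does not span $N'$ over $\bar N$'' to ``$\dcl_{\CN'}(\bar N\cup D')$ has empty interior in $N'$''. A counting argument is not available, since $|\bar N|$ may be as large as $|N'|$, so one must genuinely use the co-density of $\bar N$ together with van den Dries' classification of the subsets of the line definable in a dense pair -- equivalently, the o-minimality of the open core of $(\CN',\CN^*)$. The subtle point inside this is the passage from finite $F$ to the possibly infinite $D'$, since an increasing union of empty-interior sets can have nonempty interior; this is exactly where $|D'|<\kappa$ and the largeness of $\dim_{\CN'}(N'/\bar N)$ re-enter the picture.
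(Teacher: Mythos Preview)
Your overall strategy---reduce to showing that the realization locus of $\alpha(p)$ contains an interval and that $\dcl_{\CN'}(DN)$ has empty interior---is sound, and the identification $\dcl_{\CN'}(DN)=\dcl_{\CN'}(D\bar N)$ is correct. But the argument stalls precisely at the step you yourself flag as ``the hard part'', and you do not actually close it.

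The specific gap is the assertion $\dim_{\CN'}(N'/\bar N)>|D'|$. From non-tightness and saturation you correctly get $\bar N\subsetneq N'$, i.e.\ $\dim_{\CN'}(N'/\bar N)\geq 1$; but nothing you wrote upgrades this to $>|D'|$. (Your appeal to freeness of $(D,C)$ is also not available: the lemma's hypothesis is only $|D|<\kappa$.) To prove $\dim_{\CN'}(N'/\bar N)\geq\kappa$ one must, at each stage, find an element of $N'$ outside $\dcl_{\CN'}(E\cup N)$ for a given $|E|<\kappa$. Unpacking this, one needs: for every interval $J$ and every $\CN'$-definable (over $E$) map $g$, $J\not\subseteq g(N^k)$. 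This is exactly the content of the lemma you are trying to prove, and its proof uses the $\kappa$-saturation of the pair $(\CN',\CN)$ together with \cite[Lemma~4.1]{vdDriesDense} applied to $(\CN',\CN^*)$. So your route is circular unless you insert that argument here---at which point the detour through $\bar N$, $D'$, and ``empty interior of a union'' becomes superfluous. Note also that your intermediate claim ``$\dcl_{\CN'}(\bar N\cup F)$ has empty interior for finite $F$'' is not a statement of \cite{vdDriesDense}: it is again an infinite union of sets $g(\bar N^k)$, and needs the same compactness step.

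The paper's proof is shorter and avoids the issue by applying saturation \emph{before} passing to $\bar N$. One writes $\dcl_{\CN'}(DN)=\bigcup_g g(N^k)$, the union over the (fewer than $\kappa$) functions $g$ that are $\CN'$-definable over $D$; each set $g(P^k)$ is definable in the $\kappa$-saturated pair $(\CN',\CN)$. Hence the type $\alpha(p)(x)\cup\{\,\forall\bar y\in P^k\; x\neq g(\bar y):g\,\}$ is realized iff finitely satisfiable, which reduces to: no infinite $D$-definable $X\subseteq N'$ is contained in a single $g(N^k)$. Since $X$ contains an interval and $g(N^k)\subseteq g(\bar N^k)$, this follows from \cite[Lemma~4.1]{vdDriesDense} for the dense pair $(\CN',\CN^*)$. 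The point is that $P$ (i.e.\ $N$) is in the language of the saturated pair, whereas $\bar N$ is not definable there in the non-tight case; by switching to $\bar N$ first you lose access to the saturation that does the work.
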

\proof

By the saturation of $(N',N)$ it is sufficient to prove that
$X\nsubseteq \dcl_{\CN'}(DN)$ for every infinite set $X\sub N'$
that is definable in $\CN'$ over $D$. For that it is clearly
sufficient to show that $X\nsubseteq \dcl_{\CN'}(D\bar N)$. By
applying the theory of dense pairs to the pair of o-minimal
structures $(\CN',\bar \CN)$, we may conclude from \cite[Lemma
4.1]{vdDriesDense}, that no interval in $\CN'$ is in the image of
$\bar N^n$ under an $\CN'$-definable map. This is easily seen to
imply the result we want. \qed

This ends the proof of Lemma \ref{iso-extend}.\qed

Going back to our proof of completeness of $T^d$, we find $d\in N'$ with
$\alpha(\tp_{\CM'}(b/B))=\tp_{\CN'}(d/D)$ and with $d\notin \dcl_{\CN'}{DN}$. We let
$D'=\dcl_{\CN'}(Dd)$ and $C'=D'\cap N'$ (which equals $C$), so as before $(D',C')$ is
free. It is left to see that the natural extension of $\alpha$ to $\alpha':B'\to D'$
preserves $M\cap B'$. However, $B'\cap M=A'$ so by applying what we already know to
both $\alpha$ and $\alpha^{-1}$ we conclude that $x\in M' \Leftrightarrow
\alpha'(x)\in N'$. This ends the proof of Theorem \ref{pairs-main}.\qed

Notice that the proof above showed that any isomorphism of weakly o-minimal
structures $M_1\prec M$ and $M_2\prec N$ can be extended to an isomorphism of
elementary substructures $(B,A)\prec (M',M)$ and $(D,C)\prec (N',N)$. Lemma
\ref{iso-extend} also implies:
\begin{lemma}\label{more} Assume that $(\CM',\CM), (\CN',\CN)\models T^d$ and $(B,A)\sub
(\CM',\CM)$, $(D,C)\sub (\CN',\CN)$ satisfy (i),(ii),(iii). If $\alpha:B\to D$ is an
$\CL$-isomorphism sending $A$ to $C$ and $\alpha(b)= d$ for some $b\in B^n$ then
$$\alpha(\tp_{(\CM',\CM)}(b/\0))=\tp_{(\CN',\CN)}(d/\0).$$
\end{lemma}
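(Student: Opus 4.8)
The plan is to deduce the lemma from the completeness statement of Theorem \ref{pairs-main} together with the back-and-forth engine already built in Lemma \ref{iso-extend}. The key observation is that Lemma \ref{iso-extend}, applied repeatedly, produces a genuine isomorphism between elementary substructures, and an isomorphism transports complete types over $\0$. So the first step is: given $(B,A)$ and $(D,C)$ satisfying (i),(ii),(iii) with an $\CL$-isomorphism $\alpha$ sending $A$ to $C$, enlarge $(B,A)$ and $(D,C)$ by a standard back-and-forth of length $\kappa$, alternately invoking Lemma \ref{iso-extend} (and its analogue for the $(D,C)$ side with $d\in N'$) to absorb, in turn, every element of $M'$ and every element of $N'$ into the respective structures. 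Since at each stage the pair keeps satisfying (i),(ii),(iii), and since we additionally arrange at cofinally many stages to satisfy the Tarski--Vaught criterion for both pairs (exactly as in the proof of Theorem \ref{pairs-main}: whenever a nonempty set definable over the current $B$ appears, throw in a witness), the limit produces $(B^\infty,A^\infty)\prec(\CM',\CM)$ and $(D^\infty,C^\infty)\prec(\CN',\CN)$ together with an $\CL$-isomorphism $\alpha^\infty\colon B^\infty\to D^\infty$ extending $\alpha$ and sending $A^\infty$ onto $C^\infty$ — that is, an isomorphism of the $\CL^P$-structures $(B^\infty,A^\infty)$ and $(D^\infty,C^\infty)$.

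The second step is purely formal: an isomorphism of structures preserves complete types of tuples over the empty set, so for any $b\in B^n\subseteq (B^\infty)^n$ we get
$$\alpha^\infty\bigl(\tp_{(B^\infty,A^\infty)}(b/\0)\bigr)=\tp_{(D^\infty,C^\infty)}\bigl(\alpha^\infty(b)/\0\bigr).$$
The third step is to pass from these substructures back to $(\CM',\CM)$ and $(\CN',\CN)$. Since $(B^\infty,A^\infty)\prec(\CM',\CM)$ we have $\tp_{(\CM',\CM)}(b/\0)=\tp_{(B^\infty,A^\infty)}(b/\0)$, and likewise on the $\CN$ side, and since $\alpha^\infty$ extends $\alpha$ we have $\alpha^\infty(b)=\alpha(b)=d$. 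Combining, $\alpha(\tp_{(\CM',\CM)}(b/\0))=\tp_{(\CN',\CN)}(d/\0)$, as required. (One could equally phrase the conclusion of step one without the Tarski--Vaught bookkeeping and instead invoke Theorem \ref{pairs-main} to know $(\CM',\CM)\equiv(\CN',\CN)$, then use homogeneity of a monster model; but since the back-and-forth with witnesses is already in hand from the proof of Theorem \ref{pairs-main}, it is cleanest to reuse it verbatim.)

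The main obstacle — and really the only point requiring care — is making sure the back-and-forth can simultaneously (a) absorb new elements while preserving conditions (i),(ii),(iii), (b) include Tarski--Vaught witnesses on \emph{both} sides, and (c) do so within cardinality $<\kappa$ at every stage, so that Lemma \ref{iso-extend} remains applicable and the whole construction closes off at the saturation cardinal $\kappa$. This is handled by the usual bookkeeping argument (interleaving a well-ordering of the relevant tasks), exactly as in \cite{vdDriesDense}; the substantive content has already been isolated in Lemma \ref{iso-extend}, so no new model-theoretic input is needed here.
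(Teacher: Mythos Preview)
Your proposal is correct and is precisely the argument the paper has in mind: the paper gives no separate proof of Lemma \ref{more}, stating only that it follows from Lemma \ref{iso-extend}, and what you have written is exactly the intended unpacking --- iterate Lemma \ref{iso-extend} in a back-and-forth (with Tarski--Vaught witnesses interleaved, as in the proof of Theorem \ref{pairs-main}) to produce isomorphic $\CL^P$-elementary substructures extending $(B,A)$ and $(D,C)$, then read off the equality of types. One cosmetic remark: the phrase ``absorb every element of $M'$ and every element of $N'$'' is not literally what you want (or can do) --- you only need to absorb Tarski--Vaught witnesses, which you in fact say a line later; also, both the paper and your argument tacitly assume the ambient models are $\kappa$-saturated with $|B|,|D|<\kappa$, which is how the lemma is actually used in Theorem \ref{stable-embed}.
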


We can now prove analogues of several theorems from \cite{vdDriesDense}. The proofs
are very similar to the original ones.

\begin{theorem}\label{stable-embed} Let$\CM^d=(\CM',\CM)$ be a model of $T^d$.
\begin{enumerate}

\item In $\CM^d$, every $\0$-definable subset of $(M')^n$ is a boolean combination
of sets defined by formulas of the form \begin{equation}\label{formula} \exists
x_1\cdots \exists x_k (\bigwedge_{i=1}^k x_i\in P \, \&\, \phi(x_1,\ldots,
x_k,y),\end{equation} where $|y|=n$ and $\phi(x,y)$ is an $\CL$ formula.

\item Let $B\sub M'$ be such that $(B,B\cap M)$ is free. Then every subset of $M^k$
that is definable in $\CM^d$ over $B\sub M'$ is of the form $Y\cap M^k$ for some
$Y\sub (M')^k$ that is definable in $\CM'$ over $B$.

\item Every subset of $M^k$ that is definable in $\CM^d$ over $A_0\sub M$ is
definable in the structure $\CM$ over $A_0$.

 \item Every subset of $M^n$ that is definable
in $(\CM^*,\CM)$ (here $\CM^*$ {\bf is} the completion of $\CM$) is definable in the
structure $\CM$.

\end{enumerate}
\end{theorem}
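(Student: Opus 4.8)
The plan is to follow van den Dries' analysis of dense pairs \cite{vdDriesDense}, with Lemma~\ref{more} (every $\CL$-isomorphism between pairs satisfying (i)--(iii) is elementary for the pair) as the engine and with the weakly o-minimal inputs Theorem~\ref{intersection} and Claim~\ref{types} replacing their o-minimal counterparts. All four assertions reduce to the case of a highly saturated $\CM^d=(\CM',\CM)\models T^d$, in which I work throughout; I use freely that $\CM^*\prec\CM'$ (the natural embedding between models of $T^*$, elementary by the quantifier elimination of Proposition~\ref{QE}).

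For part (1), let $\mathcal F$ be the family of subsets of the various $(M')^n$ defined by formulas of the form~(\ref{formula}). As $\mathcal F$ is closed under finite intersection and contains $\emptyset$ and each $(M')^n$, a routine compactness argument reduces the claim to showing that if $b,b'\in(M')^n$ belong to exactly the same members of $\mathcal F$ then $\tp_{\CM^d}(b/\0)=\tp_{\CM^d}(b'/\0)$ --- in other words, that $T^d$ is near model complete relative to the formulas~(\ref{formula}). For this one adapts the back-and-forth of Theorem~\ref{pairs-main}: the $\mathcal F$-type of $b$ determines $\tp_{\CM'}(b/\0)$, the value $\dim_{\CM'}(b/M)$ and which coordinates $b_1$ of $b$ form a maximal $\dcl_{\CM'}$-independent-over-$M$ subtuple, together with, for each $\CL$-formula, whether it exhibits the remaining coordinates of $b$ as $\dcl_{\CM'}$-values over $b_1$ and a tuple from $P$; choosing such a witnessing tuple $\bar m\subseteq M$ and the attendant $\CL$-formulas and using density of both $M$ and $M'\setminus M$ in $M'$ with saturation, one finds corresponding data for $b'$, so that $B:=\dcl_{\CM'}(b_1\bar m)$ and its analogue $B'$ (with $A:=B\cap M$, $A':=B'\cap M$) satisfy (i)--(iii), by the o-minimal dimension computation of \cite{vdDriesDense}, and the induced map $B\to B'$ is an $\CL$-isomorphism sending $A$ to $A'$ and $b$ to $b'$; Lemma~\ref{more} then gives $\tp_{\CM^d}(b/\0)=\tp_{\CM^d}(b'/\0)$.

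For part (2), given $B\subseteq M'$ with $(B,B\cap M)$ free, we may (only finitely many parameters occur, and the $\dcl_{\CM'}$-closure is still free) take $B$ small and $\dcl_{\CM'}$-closed, with $A:=B\cap M$, so $(B,A)$ satisfies (i)--(iii). It suffices to show that $c_1,c_2\in M^k$ with the same $\CM'$-type over $B$ have the same $\CM^d$-type over $B$; then $X\subseteq M^k$ definable over $B$ and its complement in $M^k$ are both traces on $M^k$ of sets type-definable in $\CM'$ over $B$, and compactness yields $X=Y\cap M^k$ with $Y$ definable in $\CM'$ over $B$. But $\tp_{\CM'}(c_1/B)=\tp_{\CM'}(c_2/B)$ gives the same cut over $\dcl_{\CM'}(A)$, hence by Claim~\ref{types} the same $\CM$-type over $A$, so iterating Case~I of Lemma~\ref{iso-extend} from $\mathrm{id}_B$ (each time taking the new element to be the relevant coordinate of $c_2$) extends $\mathrm{id}_B$ to an $\CL$-isomorphism $\dcl_{\CM'}(Bc_1)\to\dcl_{\CM'}(Bc_2)$ of free pairs fixing $A$ pointwise and sending $c_1$ to $c_2$, and Lemma~\ref{more} gives the conclusion. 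Part (3) follows by applying part (2) with $B=A_0\subseteq M$ (so $(A_0,A_0)$ is trivially free), writing $X=Y\cap M^k$ with $Y$ definable in $\CM'$ over $A_0$; since $A_0\subseteq\bar M$ and $\CM^*\prec\CM'$, the set $Y\cap\bar M^k$ is definable in $\CM^*$ over $A_0$, i.e.\ $\0$-definable in $\CM^*_{A_0}$, so Theorem~\ref{intersection} shows that $X=(Y\cap\bar M^k)\cap M^k$ is $A_0$-definable in $\CM$. Part (4) follows from part (3): $(\CM^*,\CM)\models T^d$ and any parameter for $X$ lies in $\bar M=\dcl_{\CM^*}(M)$, in fact in $\dcl_{\CM^*}(F)\subseteq\dcl_{(\CM^*,\CM)}(F)$ for some finite $F\subseteq M$, so $X$ is definable in $(\CM^*,\CM)$ over a subset of $M$.

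The main obstacle is the bookkeeping in part (1): recovering from the $\mathcal F$-type of $b$ a witnessing tuple $\bar m\subseteq M$ together with the $\CL$-formulas placing $b$ in $\dcl_{\CM'}(b_1\bar m)$, and verifying that a compatible configuration is realized by $b'$ so that the resulting map of free pairs is genuinely an $\CL$-isomorphism preserving membership in $M$ --- the point where Theorem~\ref{intersection} (via the $T$-definable partition of $\dcl$-functions into those valued in $M$ and those valued in $\bar M\setminus M$, exactly as in Case~I of Lemma~\ref{iso-extend}) is needed. Once part (1) is in place, parts (2)--(4) are routine consequences of Lemmas~\ref{iso-extend} and~\ref{more}, Claim~\ref{types}, and Theorem~\ref{intersection}; the o-minimal dimension computation establishing freeness of the relevant pairs is standard and should not cause trouble.
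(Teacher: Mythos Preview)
Your proposal is correct and tracks the paper's argument closely in parts (1), (2) and (4): in each case the engine is Lemma~\ref{more}, one manufactures free pairs satisfying (i)--(iii) exactly as you describe (the paper's version of (1) is marginally slicker---it picks $a\subseteq M$ with $\dim_{\CM'}(b/a)=\dim_{\CM'}(b/M)$ and then uses the $\mathcal F$-equivalence plus saturation to find $c\subseteq M$ with $\tp_{\CM'}(b,a)=\tp_{\CM'}(d,c)$ directly, rather than first isolating an $M$-independent subtuple $b_1$ of $b$---but the two organizations are interchangeable; your aside about density of $M'\setminus M$ is not actually needed in (1)).

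Part (3) is where you genuinely diverge. The paper does \emph{not} deduce the parameter control from (2) and Theorem~\ref{intersection}; instead it argues via automorphisms: given $a_1,a_2\in M$ with the same $\CM$-type over $A_0$, it finds a small $\CM_1\prec\CM$ containing $A_0,a_1,a_2$ together with an automorphism of $\CM_1$ over $A_0$ taking $a_1$ to $a_2$, and then invokes the remark after Theorem~\ref{pairs-main} to extend this to an isomorphism of elementary substructures of $(\CM',\CM)$ fixing $A_0$. Your route---apply (2) with $B=A_0$ to get $X=Y\cap M^k$ with $Y$ $\CL$-definable over $A_0$, pass to $Y\cap\bar M^k$ via $\CM^*\prec\CM'$, and invoke Theorem~\ref{intersection}---is shorter and more direct; it exploits the parameter-tracking already built into Theorem~\ref{intersection}, whereas the paper only uses the cruder Fact~\ref{Facts}(3) at that point (which gives $\CM$-definability but not over $A_0$) and then has to recover the parameters separately. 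The paper's argument, on the other hand, makes more transparent the stronger statement that same $\CM$-type over $A_0$ implies same $\CL^P$-type over $A_0$.
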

\proof  Without loss of generality, $(\CM',\CM)$ is sufficiently saturated.

(1) By standard model theoretic considerations it is enough to prove the following:
For any $b,d\in (M')^k$, assume that $b$ satisfies a formula of the form
(\ref{formula}) if and only if $d$ does. Then $b$ and $d$ have the same type in
$\CM^*$ over $\0$.

Let $r=\dim_{\CM'}(b/M)$. We can find $a\subseteq M$ finite such that
$\dim_{\CM'}(b/a)=r$. It follows that if we let $B=\dcl_{\CM'}(ab)$ and
$A=\dcl_{\CM'}(a)$ then $(B,A)$ is free and $A=B\cap M$.

We consider the $\CL$-type of $(b,a)$ over $\0$. Because $b$ and $d$ realize the
same formulas of the form (\ref{formula}), and because of saturation we can find
$c\in M$ such that $\tp_{\CM'}(b,a/\0)=\tp_{\CM'}(d,c/\0)$. The pair $(D,C)$, with
$D=\dcl_{\CM'}(cd)$ and $C=\dcl_{\CM'}(c)$ is free with $C=D\cap M$. Just like in the
proof of Lemma \ref{iso-extend}, the natural $\CL$-isomorphism of $B$ and $D$ (sending
$(b,a)$ to $(d,c)$) sends $A$ to $C$.

By Lemma \ref{more}, the $\CL^P$-types of $b$ and $d$ in $(\CM',\CM)$ are the same.
Thus we proved (1).

(2) By standard model theoretic arguments it is sufficient to prove: If $b_1,b_2\in
M^k$  satisfy the same $\CM'$-type over $B$ then they satisfy the same $L^P$-type
over $B$. For that, let $A=B\cap M$. It is sufficient to show that there are
$(B_1,A_1), (B_2,A_2)\prec  (\CM',\CM)$, with $(B,A)\sub (B_i,A_i)$ and $b_i\in B_i$
for $i=1,2$, and there is an $\CL$-isomorphism between $(B_1,A_1)$ and $(B_2,A_2)$,
which fixes $B$ point-wise, and sending $b_1$ to $b_2$.


We are now in the setting of Case I of the proof of Lemma \ref{iso-extend}, with our
$b_1,b_2$ replacing $b,d$ there. Thus, we may first find two free pairs
$(B_1',A_1')$ and $(B_2',A_2')$ with $B\sub B_i'$ and $b_i\in B_i'$, $i=1,2$, and an
isomorphism $\alpha:(B_1',A_1')\to (B_2',A_2')$ extending the identity map, with
$\alpha(b_1)=b_2$. We now proceed exactly as in the proof of Theorem
\ref{pairs-main} and obtain the desired $(B_1,A_1), (B_2,A_2) \prec (\CM',\CM)$.
Thus, $b_1$ and $b_2$ realize the same $\CL^P$ type over $B$ and we may conclude
(1).

For (3), let $X\sub M^k$ be definable in $(\CM',\CM)$ over $A_0\sub M$. Notice that
the mere definability of $X$ in $\CM$ follows immediately from (2) but we want to
show that $X$ is definable over the same $A_0$. For that, it is sufficient to prove
that any $a_1,a_2\in M$ which realize the same $\CM$-type over $A_0$ realize the
same $\CL^P$-type over $A_0$.

To do that, we first find a small model $\CM_1\prec \CM$ containing $A_0$ $a_1,a_2$,
and an automorphism $\alpha$ of $\CM_1$ over $A_0$, sending $a_1$ to $a_2$. As we
commented previously, we may the extend $\alpha$ to an isomorphism of two structures
$(B,A), (D,C) \prec (\CM',\CM)$. This is clearly sufficient.

 To see (4), we note that every element of $\CM^*$ is in $\dcl_{\CM^*}(N)$ and hence
every definable subset of $\bar M^k$ in $\CM^*$ can be defined over $M$. We now
apply (3).\qed\\

Note that (3) above fails if we omit the requirement that $M_0\sub M$, since in the
non-tight case, in general,  $\CM'$ will realize cuts which are not definable in
$\CM$ and thus their intersection with $M$ is not definable in $\CM$.

We also point out:
\begin{lemma}\label{definably complete} If $\CM^d=(\CM',\CM) \models T^d$ then it is
definably complete.
\end{lemma}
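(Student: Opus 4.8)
The plan is to prove definable completeness of $\CM^d=(\CM',\CM)$ directly from Theorem \ref{stable-embed}(1) together with the definable completeness of the o-minimal structure $\CM'$. Recall that a (linearly ordered) structure is \emph{definably complete} if every definable subset of $M'$ that is bounded above has a least upper bound in $M'$. Since $\CM'\models T^*$ is o-minimal and is an expansion of an ordered group, $\CM'$ is definably complete as a structure in the language $\CL$; the content to be checked is that adding the predicate $P$ (interpreted as $M$) does not create new bounded definable subsets of $M'$ lacking a supremum.

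First I would reduce to the one-variable case in the obvious way: let $X\subseteq M'$ be definable in $\CM^d$ over some parameters $\bar c\in (M')^m$, and assume $X$ is nonempty and bounded above. By Theorem \ref{stable-embed}(1), applied after absorbing the parameters $\bar c$ (or, more carefully, using the relativized version of that statement allowing parameters, which follows by the same back-and-forth argument), $X$ is a boolean combination of sets of the form $\{y\in M' : \exists x_1\cdots\exists x_k(\bigwedge_i x_i\in P \wedge \phi(\bar x,y,\bar c))\}$ with $\phi$ an $\CL$-formula. The key point is that each such set, being a boolean combination of unary sets in an o-minimal expansion of an ordered group living inside $\CM'$... but here is the subtlety: these sets are only definable in $\CM^d$, not a priori in $\CM'$. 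So instead I would argue as follows: since $\CM'$ is o-minimal, any unary set definable in the reduct $\CM'$ is a finite union of points and intervals. The family of sets of the form displayed above is closed under the boolean operations, and I claim each is \emph{also} a finite union of points and open intervals with endpoints in $\CM' \cup\{\pm\infty\}$.

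The cleanest route to that claim, and the step I expect to be the main obstacle, is: fix $y_0=\sup X$ computed in $\bar\CM$ or in a saturated elementary extension of $\CM'$, and show $y_0\in M'$. Here one uses that $\CM^*\prec\CM'$ and that every element of an elementary extension realizing a cut over a small set... Actually the honest main obstacle is verifying that a set defined by formula \eqref{formula} has only finitely many ``boundary points'' in $M'$ and that these boundary points lie in $M'$; equivalently, that such a set is a finite union of intervals with endpoints in $\dcl_{\CM^d}(\bar c)\cap M' = \dcl_{\CM^d}(\bar c)$. For this I would invoke Lemma \ref{more} / Lemma \ref{iso-extend}: if $y_0\notin\dcl_{\CM^d}(\bar c)$, pick in a saturated model two realizations $y_1<y_0<y_2$ of $\tp_{\CM^d}(y_0/\bar c)$ with nothing of $\dcl_{\CM^d}(\bar c)$ between them (possible by the back-and-forth, since the type is non-algebraic and, by freeness considerations as in Case I/Case III, can be moved freely); then $y_1\in X\iff y_2\in X$ since they have the same type over $\bar c$, contradicting that $y_0$ is the supremum. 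Hence $y_0\in\dcl_{\CM^d}(\bar c)\subseteq M'$, and $y_0=\sup X$ witnesses definable completeness.

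To make this rigorous I would first record the parametrized version of Theorem \ref{stable-embed}(1): for any $B\subseteq M'$ with $(B,B\cap M)$ free, a set definable over $B$ in $\CM^d$ is a boolean combination of sets defined by formulas \eqref{formula} with parameters from $B\cap M$ together with $\CL(B)$-formulas — this is exactly the content of part (2) combined with quantifier-elimination in $\CM'$, and it lets me take $\bar c$ to generate a free pair. Then the supremum argument above goes through verbatim, using that $\dcl_{\CM^d}(\bar c)\cap M'$ equals $\dcl_{\CM'}(\bar c)$ by freeness (Lemma \ref{dcl}-type reasoning). I do not anticipate any genuinely new difficulty beyond carefully stating the parametrized form and checking that the relevant types are non-algebraic so that the back-and-forth of Lemma \ref{iso-extend} applies; both are routine given the machinery already developed.
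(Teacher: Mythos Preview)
Your approach has two genuine gaps, and there is a much shorter route that you are overlooking.

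First gap: you write ``fix $y_0=\sup X$ computed in $\bar\CM$ or in a saturated elementary extension of $\CM'$'', but the existence of $\sup X$ is precisely what is in question. In an elementary extension $(\CN',\CN)\succ(\CM',\CM)$ you do not yet know that $X(\CN')$ has a supremum either, and in the topological Dedekind completion of $M'$ there is no $\CM^d$-type to speak of. You can realize the \emph{cut} determined by $X$ in a saturated extension, but that realization need not equal $\sup X(\CN')$, so the step ``$y_1\in X\iff y_2\in X$, contradicting that $y_0$ is the supremum'' does not go through as written (and in any case requires first replacing $X$ by its downward closure, which you never do).

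Second gap: the claim that one can find $y_1<y_0<y_2$ all realizing $\tp_{\CM^d}(y_0/\bar c)$ is not justified. In the pair, $1$-types over $\bar c$ are \emph{not} determined by cuts in $\dcl_{\CM^d}(\bar c)$: whether $y\in M$, and more generally the position of $y$ relative to $\dcl_{\CM'}(M\bar c)$, affects the type. Lemma \ref{iso-extend} tells you how to extend partial isomorphisms, but it does not say that a non-algebraic $1$-type has realizations on both sides of a given realization; extracting that would require a Case~I/Case~III analysis considerably more delicate than ``can be moved freely''.

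The paper sidesteps all of this by using Theorem \ref{stable-embed}(2) rather than (1). Given $X\subseteq M'$ definable in $\CM^d$ and bounded below, the intersection of its convex hull with $M$ is a subset of $M$ definable in $\CM^d$, hence by Theorem \ref{stable-embed}(2) equals $Y\cap M$ for some $Y\subseteq M'$ definable in the o-minimal $\CM'$; take $Y$ convex. Density of $M$ in $M'$ forces $\inf Y=\inf X$, and o-minimality of $\CM'$ puts $\inf Y$ in $M'$. That is the whole argument. You had part (2) in hand in your last paragraph but used it only to refine (1); applying it directly to the convex hull intersected with $M$ is the move you are missing.
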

\proof If $X\sub M'$ is definable in $\CM^d$ and bounded below then the intersection
of its convex hull with $M$ is definable in $\CM^d$, and thus has the form $Y\cap M$
for some $Y\sub M'$ which is definable in $\CM'$. Without loss of generality, $Y$ is
also convex and thus $\mbox{Inf} Y=\mbox{Inf} X$. This suffices, by o-minimality of $\CM'$.\qed \\

We can now conclude, using Boxall and Hieronymi, \cite{BH}:

\begin{theorem}
    Let $\CM^d=(\CM',\CM)\models T^d$. If $U\sub (M')^n$ is open and
    definable in $\CM^d$ then it is definable in $\CM'$. More precisely, if an open $U$ is defined in
    $\CM^d$ over $B\sub M'$ such that $(B,B\cap M)$ is free,
    then $U$ is definable in $\CM'$ over $B$. In particular, $\CM^P$ has an
    o-minimal open core.
\end{theorem}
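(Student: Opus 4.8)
The plan is to reduce the statement to the abstract criterion of Boxall and Hieronymi \cite{BH}, which says that if a definably complete expansion $\CM^d$ of an ordered group has the property that every definable open set is definable in a prescribed o-minimal reduct (here $\CM'$), it suffices to control definable open sets of a special form coming from the induced structure. Concretely, I would argue as follows. First, by Lemma \ref{definably complete} the structure $\CM^d$ is definably complete, so the framework of \cite{BH} applies. Next, fix an open $U\subseteq (M')^n$ definable in $\CM^d$; by adding parameters we may assume it is defined over some $B\subseteq M'$, and after replacing $B$ by a larger (still small) set we may assume $(B,B\cap M)$ is free --- this is legitimate because any finite tuple can be absorbed into a free pair exactly as in Case I and Case III of the proof of Lemma \ref{iso-extend} (enlarge $B$ by a $\dcl_{\CM'}$-basis of the new parameters over $M$). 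The goal is then to show $U$ is already $\CM'$-definable over $B$.

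The key step uses Theorem \ref{stable-embed}(1): every $\0$-definable (hence, working over $B$, every $B$-definable) subset of $(M')^n$ is a boolean combination of sets defined by formulas of the form \eqref{formula}, i.e.\ projections along $P$-coordinates of $\CL$-formulas. So $U = \bigcup_i \bigcap_j Z_{ij}$ where each $Z_{ij}$ or its complement has the form $\{y : \exists \bar x\,(\bar x\in P \wedge \phi(\bar x, y))\}$ for an $\CL$-formula $\phi$ over $B$. The point is that such a projection set, intersected with the part of $(M')^n$ where $U$ is open, is controlled by its trace on $M$: by Theorem \ref{stable-embed}(2), the trace of a $B$-definable set of $\CM^d$ on $M^k$ is the trace of a $\CM'$-definable-over-$B$ set, and since $M$ is dense in $M'$ (axiom (2) of $T^d$) an open set is determined by its interior relative to the dense set $M$. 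More precisely, I would show: if $U$ is open and definable in $\CM^d$ over $B$, then $U = \inte_{M'}\bigl(\cl_{M'}(U\cap M^n)\bigr)$, the right-hand side being $\CM'$-definable over $B$ because $U\cap M^n$ is, by Theorem \ref{stable-embed}(2), of the form $V\cap M^n$ with $V$ $\CM'$-definable over $B$, and closure/interior are $\CM'$-definable operations. The inclusion $U\subseteq \cl_{M'}(U\cap M^n)$ holds because $M$ is dense in $M'$ and $U$ is open; the reverse, that $\inte_{M'}\cl_{M'}(U\cap M^n)\subseteq U$, needs that $U$ is a finite boolean combination of projection-along-$P$ formulas, so that its frontier is small --- this is exactly where the o-minimality of $\CM'$ and the dimension-theoretic input of \cite{vdDriesDense} (via Lemma \ref{nontight}, that $\bar N^k$ has empty interior under definable maps) enters, guaranteeing that $U\setminus M^n$ cannot be ``thick''.

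I expect the main obstacle to be precisely this last containment: ruling out the possibility that the boundary behaviour of $U$ near points of $M'\setminus M$ differs from what the trace $U\cap M^n$ predicts. The naive worry is a definable open $U$ whose intersection with $M^n$ is empty or lower-dimensional while $U$ itself is a genuine open box sitting in $(M'\setminus M)^n$; this is ruled out by the density of $M$ in $M'$ and, crucially, by the fact that in the non-tight case $M'\setminus M$ is also dense in $M'$ (noted in the proof of Theorem \ref{pairs-main}) together with the stable embeddedness statement Theorem \ref{stable-embed}(1)--(2). The cleanest route may well be to invoke \cite{BH} as a black box: verify their hypotheses (definable completeness, plus the description of definable open sets as controlled by the $\CL$-reduct via \eqref{formula}) and let their theorem produce the conclusion, thereby also yielding that $\CM^P$ has an o-minimal open core; the parameter-refinement ``over $B$ with $(B,B\cap M)$ free'' is then read off from the proof by tracking which parameters are used, exactly as in the analogous statements in \cite{vdDriesDense}.
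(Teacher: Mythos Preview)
Your proposal has a genuine gap in the central step. The equation you aim for,
\[
U \;=\; \inte_{M'}\bigl(\cl_{M'}(U\cap M^n)\bigr),
\]
is false in general, even for open sets already definable in $\CM'$. A trivial counterexample in $\mathbb{R}$: take $U=(0,1)\cup(1,2)$; then $\inte(\cl(U))=(0,2)\neq U$. The reverse inclusion you flag as the obstacle is therefore not just hard but actually wrong, and no appeal to ``the frontier is small'' can repair it, since the failure is not at the frontier of $U$ but at interior points of $\cl(U)$ that lie outside $U$. (Your reference to Lemma \ref{nontight} here is also off target: that lemma concerns realizing types outside $\dcl_{\CN'}(DN)$ in the non-tight case, not a dimension bound on boundaries.)

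The paper gets around this by \emph{not} trying to recover $U$ directly from its trace. Instead it runs a three-step bootstrap. First, using Theorem \ref{stable-embed}(2) exactly as you do to obtain $Y$ with $Y\cap M^n=U\cap M^n$, one shows only that $\cl_{M'}(U)=\cl_{M'}(\inte_{M'}(Y))$, hence the \emph{closure} of any $\CM^d$-definable open set is $\CM'$-definable over the same $B$; the key input here is that $\dim_{\CM'}(Y\setminus\inte_{M'}(Y))<n$, so by cell decomposition $(Y\setminus\inte_{M'}(Y))\cap M^n$ has empty interior in $M^n$, whence $\inte_{M'}(Y)\cap M^n$ is dense in $U$. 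Second, this immediately gives that every $\CM^d$-definable continuous $f:(M')^n\to M'$ is $\CM'$-definable, since the closure of the open subgraph $\{(x,y):y<f(x)\}$ is $\{(x,y):y\le f(x)\}$. Third, any $\CM^d$-definable closed $F$ is the zero set of the continuous function $x\mapsto d(x,F)$ (well-defined by Lemma \ref{definably complete}), so $F$ is $\CM'$-definable; taking complements finally yields the result for open $U$.

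Your fallback of citing \cite{BH} as a black box is essentially what the paper also says suffices, but the hypotheses to verify are not the near-model-completeness description (\ref{formula}); what is needed is definable completeness together with the trace statement Theorem \ref{stable-embed}(2). If you want to go that route, that is the pair of facts to feed into \cite[Corollary 3.2]{BH}.
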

\begin{proof} This is an immediate corollary of \cite[Corollary 3.2]{BH} and what we
proved so far. We extract from their argument a direct proof, which is underlined by
the following simple corollary of cell decomposition.

\begin{fact}\label{empty interior} If $Y\sub (M')^n$ is definable in $\CM'$ and $\dim Y<n$ then $Y\cap
M^n$ has empty interior in $M^n$.
\end{fact}

We now first claim that $\cl_{M'}(U)$ is definable in $\CM'$ over $B$. Indeed, by
Theorem \ref{stable-embed} (2), there is $Y\sub (M')^n$ definable in $\CM'$ over $B$
such that $Y\cap M^n=U\cap M^n$. By the above observation, $\dim Y=n$.

Since $M^n$ is dense in $(M')^n$, the set $Int(Y)\cap M^n$ is dense in the open set
$Int(Y)$. We claim that it is also dense in $U$. Indeed, we know that  $Y\cap
M^n=U\cap M^n$ is open in $M^n$ and dense in $U$, and  by o-minimality
$\dim_{\CM'}(Y\setminus Int(Y))<n$.  It thus follows from Fact \ref{empty interior},
that $Int(Y)\cap M^n$ is dense in $U$.

So, $$\cl_{M'}(U)=\cl_{M'}(Int(Y)\cap M^n)=\cl_{M'}(Int(Y)).$$ Because $Y$ was
definable in $\CM'$ over $B$, $\cl_{M'}(U)$ is definable in $\CM'$ over $B$.

We thus showed that the closure of every $\CM^d$-definable open set over $B\sub M'$
is definable in $\CM'$ over $B$. It follows that every $\CM^d$-definable continuous
function $f:(M')^n\to M$ is definable in $\CM'$, over the same parameters. Indeed,
the closure of the open set $\{(x,y)\in (M')^{n+1}:y<f(x)\}$ is exactly $\{(x,y)\in
(M')^{n+1}:y\leq f(x)\}$, from which the definability of $f$ follows.

Finally, we show that every closed $F\sub (M')^n$ set which is $\CM^d$-definable
over $B\sub M'$ is definable in $\CM'$ over $B$. For every $x\in M^n$ we let
$f(x)=d(x,F)=Inf\{d(x,y):y\in F\}$. By Lemma \ref{definably complete}, this is a
well defined function in $\CM^d$ (over $B$), and since $F$ is closed, the function
$f$ is continuous and $F$ is its zero set. Because $f$ is definable in $\CM'$ over
$B$, so is the set $F$.

Since every definable set in $\CM'$ can be defined over some
$B\sub M'$ with $(B,B\cap M)$ free, the theorem follows.
\end{proof}

\end{document}